\pdfoutput=1

\newif\ifpersonal
\personaltrue

\documentclass[11pt,a4paper,dvipsnames,x11names]{amsart}

\DeclareMathAlphabet{\mathpzc}{OT1}{pzc}{m}{it}

\usepackage[utf8]{inputenc}
\usepackage[T1]{fontenc}
\usepackage[english]{babel}

\usepackage[a4paper,margin=1in]{geometry}

\usepackage{microtype,lmodern}
\usepackage{mathpazo}
\usepackage{euler}
\usepackage{mathtools}

\usepackage{amsmath,amssymb,amsthm,mathrsfs,bm,eucal,tensor}
\usepackage{aliascnt}
\usepackage{stmaryrd}
\usepackage{dsfont}

\usepackage[all,cmtip]{xy}
\usepackage{tikz}
\usetikzlibrary{arrows.meta,calc,positioning,fit,backgrounds,shapes.geometric}
\usepackage{tikz-cd}

\tikzset{
  >=Stealth,
  box/.style={draw, rounded corners, inner sep=6pt},
  v/.style={draw, circle, inner sep=1.4pt},
  arr/.style={->, thick},
  darr/.style={->, thick, dashed}
}

\usepackage{enumerate,comment,braket,xspace,csquotes}
\usepackage[hidelinks]{hyperref}

\newcommand{\one}{1\hskip-3.5pt1}

\newcommand{\C}{\mathcal C}
\newcommand{\Geom}{\mathrm{Geom}}
\newcommand{\pt}{\mathrm{pt}}
\newcommand{\id}{\mathrm{id}}
\newcommand{\Hom}{\mathrm{Hom}}
\newcommand{\End}{\mathrm{End}}

\newcommand{\forg}{\mathrm{forg}}
\newcommand{\Th}{\mathrm{Th}}

\newcommand{\Loc}{\mathrm{Loc}}
\newcommand{\BM}{\mathrm{BM}}
\newcommand{\D}{\mathbb D}
\providecommand{\Fix}{\operatorname{Fix}}
\providecommand{\OO}{\mathcal O}
\providecommand{\Perf}{\mathrm{Perf}}
\providecommand{\Sym}{\operatorname{Sym}}

\numberwithin{equation}{section}
\theoremstyle{plain}
\newtheorem{theorem}{Theorem}[section]
\newaliascnt{proposition}{theorem}
\newtheorem{proposition}[proposition]{Proposition}
\aliascntresetthe{proposition}
\newaliascnt{lemma}{theorem}
\newtheorem{lemma}[lemma]{Lemma}
\aliascntresetthe{lemma}
\newaliascnt{corollary}{theorem}
\newtheorem{corollary}[corollary]{Corollary}
\aliascntresetthe{corollary}
\theoremstyle{definition}
\newaliascnt{definition}{theorem}
\newtheorem{definition}[definition]{Definition}
\aliascntresetthe{definition}
\theoremstyle{remark}
\newaliascnt{remark}{theorem}
\newtheorem{remark}[remark]{Remark}
\aliascntresetthe{remark}

 \title{A categorical and algebro-geometric theory of localisation}
\author{Mauricio Corr\^ea}
\address{Dipartimento di Matematica, Universit\`a degli Studi di Bari Aldo Moro, Bari, Italy}
\email{mauricio.barros@uniba.it}
\author{Simone Noja}
\address{Dipartimento di Matematica, Universit\`a degli Studi di Bari Aldo Moro, Bari, Italy}
\email{simone.noja@uniba.it}
\date{}

\begin{document}

\begin{abstract}
We develop a categorical and algebro-geometric theory of localisation for cohomological theories with open--closed recollement.  A class whose restriction to the open complement vanishes need not determine a preferred class on the closed stratum; the localisation triangle associates with it instead a torsor of supported refinements, whose secondary indeterminacy is governed by the connecting morphism from the open complement.
We prove compatibility with excision, base change, proper pushforward, external products and local indices, and show that compatible supported constructions factor through this torsor.  Under explicit Gysin hypotheses, injectivity of Euler multiplication gives a pre-Euler canonicity criterion, making the supported refinement unique before any coefficient localisation.  Purity, concentration and Euler rigidification recover the usual Euler-denominator formulae.  We also relate the secondary boundary group to link transgression, treat equivariant algebraic \(K\)-theory as a multiplicative analogue, and introduce Milnor localisation torsors for characteristic-class defects of singularities.
\end{abstract}

\maketitle


\section{Introduction}\label{sec:intro}

Localisation formulae occupy a central position in modern geometry.  They express global invariants as sums, traces or residues supported on a distinguished closed locus: fixed points of group actions, fixed loci of correspondences, degeneracy loci of sections, boundary strata of compactifications, or loci selected by symmetry.  Classical instances include the localisation theorems of Atiyah--Bott and Berline--Vergne in equivariant cohomology \cite{AB,BV}, the algebro-geometric form of Edidin--Graham \cite{EG}, and Thomason's theorem in equivariant algebraic \(K\)-theory \cite{Thomason}.  In enumerative geometry, virtual localisation \cite{GP} underlies much of modern Gromov--Witten and Donaldson--Thomas theory \cite{DT,MNOP1,PT}.

The present paper isolates the exact-sequence phenomenon which occurs before an Euler denominator, a virtual Euler class, or a multiplicative \(K\)-theoretic denominator has entered the calculation.  Let
\[
i:Z\hookrightarrow X,\qquad j:U\hookrightarrow X
\]
be an open--closed decomposition, and let \(A\) be a coefficient object.  If
\[
c\in H^d(X;A),\qquad j^*c=0,
\]
then the localisation long exact sequence does not, in general, choose a distinguished class supported on \(Z\).  It gives instead the fibre
\[
\mathrm{Lift}^d_Z(c)=
\{\widetilde c\in H_Z^d(X;A)\mid \forg(\widetilde c)=c\},
\]
which is a torsor under
\[
\operatorname{im}\bigl(
\delta:H^{d-1}(U;j^*A)\longrightarrow H_Z^d(X;A)
\bigr).
\]
This is the elementary, but decisive, point of the paper: supported localisation is first a problem of choosing a point in a torsor; uniqueness is an additional conclusion, not part of the formal localisation sequence itself.

The coefficient formalism is fixed in Definition~\ref{def:sixfun}, and the cohomology groups are defined in Definition~\ref{def:coh} by
\[
H^d(X;A)=\Hom_{\C(X)}(\one_X,A[d]).
\]
For an open--closed pair, the recollement triangle gives the localisation long exact sequence of Theorem~\ref{thm:les}.  Lemmas~\ref{lem:supported-torsor} and~\ref{lem:LiftZ-torsor} prove the torsor statement above.  After the adjunction \(i_*\dashv i^!\), Theorem~\ref{thm:Loc-factor} identifies this fibre with the localisation torsor
\[
\Loc_Z^{\mathrm{tor}}(c)
\subset
\Hom_{\C(Z)}(\one_Z,i^!A[d]).
\]
A canonical localised class \(\Loc_Z(c)\) exists precisely in the range in which this torsor is a singleton.

The main results are organised according to their degree of generality.
\begin{enumerate}[(i)]
\item \emph{The formal torsor theorem.}  The fundamental construction is the torsor of supported refinements, not a chosen representative on \(Z\).  Its construction and torsor structure are Lemmas~\ref{lem:supported-torsor} and~\ref{lem:LiftZ-torsor}; its adjoint form on the closed stratum is Theorem~\ref{thm:Loc-factor}.  Proposition~\ref{prop:secondary-translation-groupoid} records the associated translation groupoid, and Remark~\ref{rem:higher-localization-torsor} gives the corresponding higher localisation fibre.

\item \emph{Functoriality of supported refinements.}  Excision, Cartesian base change and proper pushforward are treated in Propositions~\ref{prop:Loc-excision}, \ref{prop:Loc-bc} and~\ref{prop:Loc-proper}.  External products give a natural morphism of torsors in Proposition~\ref{prop:Loc-box}; no surjectivity assertion is made there without further K\"unneth-type hypotheses.  Proposition~\ref{thm:Loc-universal} characterises compatible constructions of local terms as points of the same supported-refinement torsor.

\item \emph{Rigidification by geometric hypotheses.}  Purity, Thom isomorphisms, self-intersection and concentration are additional hypotheses under which the torsor may become a singleton.  The self-intersection and Euler-division statements are Theorems~\ref{thm:self-int} and~\ref{thm:Loc-by-euler}; the concentration form is Theorem~\ref{thm:universal-euler}.  In the corresponding settings one obtains the usual Euler-denominator formulae, including the ABBV formula of Corollary~\ref{cor:abbv} and the multiplicative \(K\)-theoretic formulae of Proposition~\ref{prop:K-euler} and Theorem~\ref{thm:thomason-localisation}.

\item \emph{Secondary boundary group and links.}  Under the purity, excision, orientation and tubular-neighbourhood hypotheses stated in Subsection~\ref{subsec:boundary-link-transgression}, Proposition~\ref{prop:ambiguity-link-transgression} identifies the secondary boundary group with the transgression, or Gysin boundary, associated with the normal sphere bundle.  Proposition~\ref{prop:canonicity-before-euler-inversion} gives the corresponding pre-Euler canonicity criterion: injectivity of Euler multiplication in the relevant degree already forces the torsor to have a single element.  Corollary~\ref{cor:euler-inversion-rigidification} then records the familiar conditional rigidification after the prescribed localisation of coefficients.

\item \emph{Milnor localisation torsors.}  The singularity-theoretic application is developed in Subsection~\ref{subsec:milnor-localization-torsors}.  If \(X\) is singular, \(Z=\operatorname{Sing}(X)\), and a characteristic theory \(C\) supplies a functorial class \(C(X)\) and a virtual, expected, or Fulton-type class \(C^{\mathrm{vir}}(X)\) agreeing on \(X_{\mathrm{reg}}\), then
\[
\Delta_C(X)=C^{\mathrm{vir}}(X)-C(X)
\]
satisfies \(j^*\Delta_C(X)=0\).  Definition~\ref{def:milnor-localization-torsor} therefore associates to it the Milnor localisation torsor
\[
\mathfrak{Mil}^C_Z(X)=\operatorname{Lift}_Z(\Delta_C(X)).
\]
This does not replace the classical Milnor class, nor does it assert the existence of several classical Milnor classes.  It records the possible supported realisations of the defect before a distinguished singularity-theoretic construction has selected a representative.  The torsor structure is Theorem~\ref{thm:milnor-ambiguity-torsor}; the criterion for formal uniqueness is Corollary~\ref{cor:milnor-canonical}; the characterisation of compatible supported Milnor refinements is Theorem~\ref{thm:milnor-characterisation}; comparison under characteristic transformations is Proposition~\ref{prop:milnor-comparison}; and the local Milnor index formula is Corollary~\ref{cor:milnor-local-index}.
\end{enumerate}

The comparison with differential cohomology, Chern--Simons theory and mathematical physics is kept in a secondary and interpretative sense.  Chern--Simons forms, differential characters, bundle gerbes and string structures exhibit familiar passages from a primary vanishing or trivialisation to secondary choices \cite{ChernSimons,CheegerSimons,BrylinskiMcLaughlinI,BrylinskiMcLaughlinII,HopkinsSinger,WaldorfString}.  Supersymmetric localisation separates formal concentration on a fixed or BPS locus from the subsequent index or determinant calculation \cite{PestunS4,PestunZabzine}.  No identification of the present torsor with those objects is asserted.  The point is rather that open--closed recollement already contains the exact-sequence source of such secondary choices.

The Milnor application is treated with the same caution.  Classical Milnor classes and their motivic, Hirzebruch and \(K\)-theoretic variants are attached to differences between virtual, Fulton-type or expected classes and functorial classes \cite{ParusinskiPragaczJAG,YokuraMotivicMilnor,CappellMaximSchuermannShaneson,MaximSaitoYang,CallejasMorgadoSeadeChernClasses}.  Their relation with local index theory on singular varieties is also part of the classical background; see Brasselet--Seade--Suwa and Seade \cite{BrasseletSeadeSuwaVectorFields,BrasseletSeadeSuwaOneForms,SeadeOverview}.  The present construction adds the supported-refinement torsor, its secondary boundary group, its functorial comparison maps, and its local index decompositions when the duality data of Section~\ref{sec:dual} are available.

\medskip

\noindent\textbf{Organisation of the paper.}
Sections~2 and~3 fix the six-functor setting and prove the localisation long exact sequence.  Section~4 constructs the localisation torsor, its translation groupoid and higher fibre, relates its secondary indeterminacy to link transgression under explicit hypotheses, and proves its functorial properties.  Sections~5--7 develop duality, local indices, purity, self-intersection and concentration.  Section~8 treats the ABBV formula in the Borel model.  Section~9 treats equivariant algebraic \(K\)-theory as a multiplicative analogue.  Section~10 discusses Lefschetz-type decompositions, standard sheaf-theoretic and stack-theoretic realisations, and Milnor localisation torsors.  The final perspective records the secondary analogies with differential and physical localisation.

\section{Ambient data and coefficient theories}

\subsection{Spaces and morphisms}

\begin{definition} \label{def:Geom}
Fix a category $\Geom$ of spaces (schemes or stacks of finite type, complex analytic spaces,
smooth manifolds, Whitney-stratified spaces, etc.) in which we can form:
\begin{itemize}
\item closed immersions $i:Z\hookrightarrow X$ and open complements $j:U=X\setminus Z\hookrightarrow X$,
\item Cartesian squares,
\item proper morphisms.
\end{itemize}
\end{definition}

\subsection{Coefficient categories with six operations}

\begin{definition} \label{def:sixfun}
To each $X\in\Geom$ we attach a stable triangulated category $\C(X)$ equipped with
a symmetric monoidal structure $(\otimes,\one_X)$.
For each morphism $f:X\to Y$ we have exact functors
$$
f^*:\C(Y)\to\C(X),\qquad f_*:\C(X)\to\C(Y),
$$
with adjunction $f^*\dashv f_*$, and whenever invoked also functors
$$
f_!:\C(X)\to\C(Y),\qquad f^!:\C(Y)\to\C(X),
$$
with adjunction $f_!\dashv f^!$.
For open immersions $j$ we have $j_!\dashv j^*\dashv j_*$, and for closed immersions $i$ we have
$i^*\dashv i_*\dashv i^!$.

We assume:
\begin{itemize}
\item $f^*$ is symmetric monoidal: $f^*(A\otimes B)\simeq f^*A\otimes f^*B$ and $f^*\one_Y\simeq \one_X$;
\item (projection formula when used) $f_*(M\otimes f^*N)\simeq f_*M\otimes N$ and similarly for $f_!$;
\item (base change when used) for Cartesian squares we have Beck--Chevalley isomorphisms
for the relevant pairs among $(f^*,f_*),(f^*,f_!),(f^!,f_*),(f^!,f_!)$;
\item (K\"unneth/box product when used) a bifunctor $\boxtimes:\C(X)\times\C(Y)\to\C(X\times Y)$
compatible with pullbacks and proper pushforwards in the standard way.
\end{itemize}
\end{definition}

Throughout, we work in triangulated language, which provides the common formal denominator for the range of coefficient theories under consideration. All constructions are expressed in Hom-theoretic terms and therefore carry over, mutatis mutandis, to stable symmetric monoidal $\infty$-categories, upon replacing $\Hom$ by mapping spaces or mapping spectra.
 
\subsection{Coefficient rings and (co)homology}

\begin{definition}\label{def:ringobj}
Let $X\in\Geom$. An object $A\in\C(X)$ is a \emph{commutative ring object} if it is a commutative algebra object
in the symmetric monoidal category $\big(\C(X),\otimes,\one_X\big)$.
We write $A\in \mathrm{CAlg}(\C(X))$.

\smallskip
In many standard models one fixes a commutative algebra object $B\in \mathrm{CAlg}(\C(\pt))$ on the point
(e.g.\ a field, a ring, a ring spectrum) and uses its pullback $p_X^*B$ as a coefficient object on $X$.
We will use both viewpoints: coefficients may live on $X$, and there are also \emph{ground scalars} coming from the point.
\end{definition}

\begin{definition}\label{def:coh}
For $A\in\C(X)$ define
$$
H^d(X;A):=\Hom_{\C(X)}(\one_X,\ A[d]).
$$
Let $p_X:X\to \pt$ be the structure map and set the \emph{ground ring}
$$
R:=\End_{\C(\pt)}(\one_{\pt}) = H^0(\pt;\one_{\pt}).
$$
If $B\in\C(\pt)$ is an object on the point, we also write
$$
H^d(X;B):=H^d\big(X;\ p_X^*B\big)=\Hom_{\C(X)}(\one_X,\ p_X^*B[d]).
$$
\end{definition}

\begin{lemma}\label{lem:R-action}
With $R$ as in \ref{def:coh}, for every $X$ and every $A\in\C(X)$, each graded group
$$
H^{d}(X;A)=\Hom_{\C(X)}(\one_X,A[d])
$$
carries a canonical $R$-module structure, functorial in $A$ and in $X$.
\end{lemma}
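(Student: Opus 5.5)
The plan is to let $R$ act through the unit objects: first pull scalars back to $X$, then act on the target object $A[d]$ by tensoring. Concretely, the symmetric monoidal pullback $p_X^*$ induces a ring homomorphism
\[
\rho_X:R=\End_{\C(\pt)}(\one_{\pt})\xrightarrow{\;p_X^*\;}\End_{\C(X)}(p_X^*\one_{\pt})\cong\End_{\C(X)}(\one_X),
\]
where the last isomorphism is conjugation by the structure isomorphism $p_X^*\one_{\pt}\simeq\one_X$ of Definition~\ref{def:sixfun}. Multiplicativity and unitality of $\rho_X$ follow from functoriality of $p_X^*$.

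Next, $\End_{\C(X)}(\one_X)$ acts on every object $M$ of $\C(X)$ by
\[
r\cdot \mathrm{id}_M := \lambda_M\circ(r\otimes\mathrm{id}_M)\circ\lambda_M^{-1},
\]
with $\lambda_M:\one_X\otimes M\simeq M$ the unitor. This action is natural in $M$ by naturality of $\lambda$, so it makes $\C(X)$ an $\End(\one_X)$-linear category. Since the shift is exact, it commutes with this action up to the canonical isomorphism. We then define $r\cdot\phi:=(\rho_X(r)\cdot\mathrm{id}_{A[d]})\circ\phi$ for $\phi\in\Hom(\one_X,A[d])$.

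By the Eckmann--Hilton argument, $\End(\one_X)$ is commutative. Composing $\phi$ with the action on $\one_X$ on the source side gives the same result as acting on the target side, so left and right actions agree and there is no ambiguity. The module axioms then follow from bilinearity of composition in the additive category $\C(X)$ and from the fact that $\rho_X$ is a ring map. Functoriality in $A$ is immediate from naturality of the action: for $u:A\to A'$ we have $u[d]\circ(r\cdot\mathrm{id})=(r\cdot \mathrm{id})\circ u[d]$.

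For functoriality in $X$, take $f:X\to Y$. The map $f^*:H^d(Y;A)\to H^d(X;f^*A)$ is $\phi\mapsto f^*\phi$ composed with $f^*\one_Y\simeq\one_X$. We must check two things. First, $f^*$ carries the $\End(\one_Y)$-action to the $\End(\one_X)$-action via $f^*$ on endomorphisms of units, which follows from monoidality of $f^*$, i.e.\ compatibility with $\otimes$ and with the unitors. Second, the ring maps satisfy $f^*\circ\rho_Y=\rho_X$, using $p_X\simeq p_Y\circ f$ and $p_X^*\simeq f^*p_Y^*$ compatibly with the unit isomorphisms.

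The main obstacle is this last coherence: the identification $p_X^*\simeq f^*p_Y^*$ must be compatible with the monoidal unit structure, since Definition~\ref{def:sixfun} only lists isomorphisms. I would assume, as is standard, that the pullbacks form a pseudofunctor into symmetric monoidal categories, with compatible connection isomorphisms. Under that assumption the unit comparison squares commute, and the verification reduces to a diagram chase.
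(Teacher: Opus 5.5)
Your argument is correct, but it routes the action differently from the paper. Both proofs use the same ring map $R\to\End_{\C(X)}(\one_X)$, namely your $\rho_X$, which is the paper's $r\mapsto r_X=\phi_X\circ p_X^*(r)\circ\phi_X^{-1}$. The paper then lets $r$ act by precomposition on the source, $r\cdot\alpha=\alpha\circ r_X$. You instead postcompose with the unitor-induced endomorphism $\rho_X(r)\cdot\id_{A[d]}$ of the target, which makes all of $\C(X)$ linear over $\End_{\C(X)}(\one_X)$.

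The paper's route is shorter, needs no unitors, and makes functoriality in $A$ immediate. However, precomposition is a priori a right action. With the paper's definition, $r\cdot(s\cdot\alpha)=\alpha\circ s_X\circ r_X$, so its associativity line $\alpha\circ r_X\circ s_X=r\cdot(s\cdot\alpha)$ tacitly uses commutativity of $R$. That is precisely the Eckmann--Hilton input you make explicit.

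Your route yields a left module structure on every $\Hom_{\C(X)}(M,N)$ at once. This needs additivity of $-\otimes M$, which holds in the tensor-triangulated models although Definition~\ref{def:sixfun} does not state it. It also costs you the standard coherence identifying the left and right unitors of $\one_X$, which is needed to see that on maps out of $\one_X$ your action agrees with precomposition by $r_X$.

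Finally, the coherence you single out as the main obstacle can be dispensed with by the same Eckmann--Hilton fact. Two isomorphisms $p_X^*\one_{\pt}\to\one_X$ differ by an automorphism $u$ of $\one_X$, and conjugation by $u$ is trivial on the commutative ring $\End_{\C(X)}(\one_X)$. Hence $\rho_X$ is independent of the chosen identification, and $f^*\circ\rho_Y=\rho_X$ follows from mere naturality of $p_X^*\simeq f^*p_Y^*$. The paper passes over this point by writing that isomorphism as an equality.
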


\begin{proof}
Let $p_X:X\to\pt$ be the structure map. Since $p_X^{*}$ is symmetric monoidal, it carries units to units, hence there is a
canonical isomorphism
$$
\phi_X:\ p_X^{*}(\one_{\pt}) \xrightarrow{\ \sim\ } \one_X .
$$
Given $r\in R=\End_{\C(\pt)}(\one_{\pt})$, applying $p_X^{*}$ yields an endomorphism
$p_X^{*}(r):p_X^{*}\one_{\pt}\to p_X^{*}\one_{\pt}$, and transporting along $\phi_X$ defines an endomorphism
$$
r_X \ :=\ \phi_X \circ p_X^{*}(r)\circ \phi_X^{-1}\ \in\ \End_{\C(X)}(\one_X).
$$
For $\alpha\in H^{d}(X;A)=\Hom_{\C(X)}(\one_X,A[d])$ define
$$
r\cdot \alpha \ :=\ \alpha \circ r_X \ \in\ \Hom_{\C(X)}(\one_X,A[d]).
$$
Equivalently, $r\cdot\alpha$ is the diagonal morphism in the commutative diagram
\begin{equation*}
\begin{tikzcd}[row sep=large, column sep=huge]
\one_X \arrow[r, "r_X"] \arrow[dr, "r\cdot \alpha"'] & \one_X \arrow[d, "\alpha"] \\
& A[d]
\end{tikzcd}
\end{equation*}
which makes the construction canonical.
The module axioms follow from functoriality of $p_X^{*}$ and associativity of composition. Indeed, additivity holds because
$(r+s)_X=r_X+s_X$ in $\End(\one_X)$, hence $(r+s)\cdot\alpha=\alpha\circ(r_X+s_X)=r\cdot\alpha+s\cdot\alpha$.
The unit axiom holds as follows.  Let
$\one_R:=\id_{\one_{\pt}}\in R$ denote the unit element of the ring $R$.
Then
\[
(\one_R)_X
=
\phi_X\circ p_X^*(\id_{\one_{\pt}})\circ\phi_X^{-1}
=
\id_{\one_X}.
\]
Hence $\one_R\cdot\alpha=\alpha$.
Associativity holds because $(rs)_X=r_X\circ s_X$, so
$(rs)\cdot\alpha=\alpha\circ r_X\circ s_X=r\cdot(s\cdot\alpha)$, recorded by
\begin{equation*}
\begin{tikzcd}[row sep=large, column sep=huge]
\one_X \arrow[r, "s_X"] \arrow[rr, bend left=18, "(rs)_X"] &
\one_X \arrow[r, "r_X"] &
\one_X \arrow[r, "\alpha"] &
A[d].
\end{tikzcd}
\end{equation*}

Functoriality in $A$ is formal: for $u:A\to B$ in $\C(X)$ the induced map on cohomology sends $\alpha$ to $u[d]\circ\alpha$, and
since the $R$-action is by precomposition on $\one_X$, one has
$u[d]\circ(\alpha\circ r_X)=(u[d]\circ\alpha)\circ r_X$, i.e.\ $u_*(r\cdot\alpha)=r\cdot u_*(\alpha)$.
Functoriality in $X$ follows from $p_Y=p_X\circ f$ and $(p_Y)^*=f^*(p_X)^*$: the endomorphism $r_Y$ of $\one_Y$
is the pullback of $r_X$, so pullback on cohomology commutes with the $R$-action.
\end{proof}

\begin{definition} \label{def:cup}
For $A,B\in\C(X)$ and classes $\alpha\in H^p(X;A)$, $\beta\in H^q(X;B)$, define
$$
\alpha\smile\beta\in H^{p+q}(X;A\otimes B)
$$
as the composite
$$
\one_X \xrightarrow{\simeq} \one_X\otimes \one_X
\xrightarrow{\alpha\otimes\beta} A[p]\otimes B[q]
\simeq (A\otimes B)[p+q].
$$
In particular $H^*(X;\one_X)$ is a graded ring.
\end{definition}

Here are the particular instances of the ground ring
$R=\End_{\C(\pt)}(\one_\pt)$ and of typical coefficient objects $A\in\C(X)$ that will appear in the applications.
In the examples below, the symbol $\one_X$ denotes the tensor unit of
the coefficient category over $X$.  In sheaf-theoretic examples this
is the constant sheaf; in \'{e}tale examples it is the constant
$\ell$-adic sheaf; in motivic examples it is the motivic unit.  In
bivariant or multiplicative settings, which are used below as analogues
rather than literal Hom-theoretic instances, the corresponding role is
played by the unit class of the theory.

\begin{itemize}
\item \textbf{Constructible sheaves (topological / complex-analytic) \cite{KS,BBD}.}
Take $\C(X)=D^b_c(X;k)$ for a field $k$.
Then $\one_X=k_X$, the constant sheaf, and
$
R=\End_{D(k)}(k)\cong k.
$
Typical coefficients are $A=k_X$ (constant) and $A=\mathcal F$ (a constructible complex), with
$$H^d(X;A)=\Hom_{D^b_c(X;k)}(k_X,A[d]).$$

\item  \textbf{$\ell$-adic sheaves (schemes / stacks) \cite{SGA4,BBD,LO1,LO2}.}
Take $\C(X)=D^b_c(X,\mathbb Q_\ell)$.
Then $\one_X=\mathbb Q_{\ell,X}$, the constant $\ell$-adic sheaf, and
$$
R=\End_{D^b_c(\pt,\mathbb Q_\ell)}(\mathbb Q_\ell)\cong \mathbb Q_\ell.
$$
Coefficients are $A=\mathbb Q_{\ell,X}$ or $A=\mathcal F$ in $D^b_c(X,\mathbb Q_\ell)$.

\item  \textbf{Rigid analytic motives \cite{AyoubGallauerVezzaniRigidAnalyticMotives}.}
The object $\one_X$ is the motivic unit in the rigid analytic six-functor formalism.  The six-functor formalism for rigid analytic motives developed by Ayoub--Gallauer--Vezzani gives another natural setting in which open--closed localisation triangles, exceptional functors, base change, projection formulae, and duality are available. Their localisation formula for a closed immersion and its open complement is of the same formal kind as the exactness assumed below.

\item  \textbf{Borel equivariant cohomology (ABBV) \cite{AB,BV,Illman}.}
For a compact torus $T$ acting on a compact manifold $X$, take $\C(X)=D^b_T(X;k)$.
Here $\one_X=k_X$, the equivariant constant sheaf.  Then
$$
R=\End_{\C(\pt)}(\one_\pt)=H_T^*(\pt;k)=H^*(BT;k)\cong \Sym(\mathfrak t^\vee)\otimes k,
$$
and coefficients include $\one_X=k_X$ and twists coming from local systems.

\item  \textbf{Equivariant $K$-theory (Thomason) \cite{Thomason}.}
Equivariant algebraic $K$-theory is \emph{not} a literal instance of \ref{def:coh} in the Hom-based formalism adopted here. Rather, it will be treated later as a multiplicative analogue of the Euler-denominator construction. In that setting the corresponding unit is the class of the structure sheaf, or equivalently the unit in $K_0^T(X)$, and the coefficient ring is the representation ring
$$
R(T)=K_0^T(\pt),
$$
and the relevant denominator on fixed loci is the multiplicative class $\lambda_{-1}(N^\vee)$.

\item  \textbf{Equivariant Chow groups (Edidin--Graham) \cite{EG}.}
In the bivariant/operational formulation of equivariant Chow, the corresponding unit is the operational class $1_X\in A_T^0(X)$, the ground ring is
$$
R=A_T^*(\pt),
$$
and one uses $A_T^*(X)$ together with refined Gysin maps; the Euler denominator is the top Chern class $c_c(N_{Z/X})$.

\item  \textbf{Lefschetz-type settings (graphs and diagonals) \cite{KS,Fujiwara,Varshavsky,HoyoisTrace}.}
In any model with $(-)^!$ and $(-)_!$ (e.g.\ constructible sheaves, $\ell$-adic sheaves, suitable $K$-theoretic or motivic
contexts), the unit is the tensor unit of the chosen coefficient category, and the Lefschetz class of $f:X\to X$ is built on $X\times X$ with coefficients such as
$(\Gamma_f)_!\one_X$ and then pulled back by $\Delta^!$. Here $R=\End_{\C(\pt)}(\one_\pt)$ controls the resulting trace value.

\item  \textbf{Virtual localisation (DM stacks) \cite{GP,LO1,LO2}.}
For a Deligne--Mumford stack with torus action and a perfect obstruction theory, the corresponding role is played by the fundamental or virtual unit class from which the virtual localisation expression is formed.  The relevant coefficient ring is typically
$R=H_T^*(\pt;k)$ (cohomological version) or $R(T)$ ($K$-theoretic version), and the denominator is the Euler class
of the virtual normal bundle $e_T(N^{\mathrm{vir}})$.

\end{itemize}
 
The foregoing list is intended to convey the geometric breadth of the present formalism across a substantial range of localisation theories. In several important cases---most notably equivariant Chow theory, virtual localisation, and equivariant algebraic $K$-theory---the relevant constructions are most naturally formulated in a bivariant, obstruction-theoretic, or multiplicative language rather than literally through the Hom-groups of \ref{def:coh}. Thus, in equivariant Chow theory one works in the operational formalism of Edidin--Graham \cite{EG}, whereas in virtual localisation one works with perfect obstruction theories and virtual normal bundles \cite{GP}. The purpose of the present formalism is to isolate the common structural pattern underlying these constructions, while making transparent the precise stage at which the additional geometry specific to each theory must enter.


\subsection{Conventions on shifts and boundary morphisms}\label{subsec:shift-sign-conventions}

We use cohomological grading.  Distinguished triangles are written
\[
K\longrightarrow L\longrightarrow M\xrightarrow{+1},
\]
and all connecting morphisms in long exact sequences are those induced by this convention.  In particular, every boundary morphism denoted by \(\delta\) is obtained by applying \(\Hom(\one_X,-)\) to the relevant localisation triangle.

Whenever purity or a Thom isomorphism is invoked for a regular, oriented, or normally oriented closed immersion, the shifts and twists are those appearing in the chosen purity equivalence.  Euler classes and self-intersection morphisms are defined with respect to the same orientation data.  No additional sign convention is imposed beyond the standard signs of triangulated categories.

\section{Recollement and cohomology with supports}

\subsection{Recollement axioms for an open--closed pair}

Fix a closed immersion $i:Z\hookrightarrow X$ and open complement $j:U\hookrightarrow X$.

\begin{definition} \label{def:recollement}
We assume:
\begin{itemize}
\item adjunctions $i^*\dashv i_*\dashv i^!$ and $j_!\dashv j^*\dashv j_*$;
\item full faithfulness of $i_*$ and $j_!$;
\item vanishings $j^*i_*=0$ and $i^*j_!=0$;
\item functorial distinguished triangles, for each $M\in\C(X)$:
$$
j_!j^*M \longrightarrow M \longrightarrow i_*i^*M \xrightarrow{+1},
\qquad
i_*i^!M \longrightarrow M \longrightarrow j_*j^*M \xrightarrow{+1}.
$$
\end{itemize}
\end{definition}

\subsection{Cohomology with supports and the localisation long exact sequence}

\begin{definition} \label{def:supports}
For $A\in\C(X)$ define
$$
H_Z^d(X;A):=\Hom_{\C(X)}(\one_X,\, i_*i^!A[d]).
$$
The forget-support map
$$
\forg:H_Z^d(X;A)\to H^d(X;A)
$$
is induced by the counit $i_*i^!A\to A$.
\end{definition}

\begin{theorem} \label{thm:les}
Assume \ref{def:recollement}. For every $A\in\C(X)$ there is a functorial long exact sequence
$$
\cdots \longrightarrow H_Z^{d}(X;A)\xrightarrow{\forg} H^{d}(X;A)\xrightarrow{j^{*}}
H^{d}(U;j^{*}A)\xrightarrow{\delta} H_Z^{d+1}(X;A)\longrightarrow \cdots .
$$
\end{theorem}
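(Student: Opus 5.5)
We apply the cohomological functor \(\Hom_{\C(X)}(\one_X,-)\) to the second recollement triangle of Definition~\ref{def:recollement},
\[
i_*i^!A \xrightarrow{\ \varepsilon\ } A \xrightarrow{\ \eta\ } j_*j^*A \xrightarrow{+1},
\]
where \(\varepsilon\) is the counit of \(i_*\dashv i^!\) and \(\eta\) the unit of \(j^*\dashv j_*\). In any triangulated category, \(\Hom(\one_X,-)\) sends a distinguished triangle to a long exact sequence of abelian groups. Its consecutive terms are
\[
\Hom(\one_X,i_*i^!A[d])\to\Hom(\one_X,A[d])\to\Hom(\one_X,j_*j^*A[d])\to\Hom(\one_X,i_*i^!A[d+1]),
\]
with the connecting map given by composition with the shifted third morphism of the triangle. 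This follows the convention of Subsection~\ref{subsec:shift-sign-conventions}.

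The first term is \(H^d_Z(X;A)\) by Definition~\ref{def:supports}, and the first map is \(\forg\) by construction. The remaining step is to identify the third term with \(H^d(U;j^*A)\). We use the adjunction \(j^*\dashv j_*\) together with the monoidality of \(j^*\), which gives \(j^*\one_X\simeq\one_U\):
\[
\Hom_{\C(X)}(\one_X,j_*j^*A[d])\cong\Hom_{\C(U)}(j^*\one_X,j^*A[d])\cong\Hom_{\C(U)}(\one_U,j^*A[d]).
\]
Here we use that \(j^*\) and \(j_*\) are exact, so they commute with shifts.

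We then check that under this identification the map \(\alpha\mapsto\eta[d]\circ\alpha\) becomes \(\alpha\mapsto j^*\alpha\) (transported along \(j^*\one_X\simeq\one_U\)). This is the standard formula: the adjoint of \(\eta_A\circ\alpha\) is \(\epsilon_{j^*A}\circ j^*\eta_A\circ j^*\alpha\), and the triangle identity \(\epsilon_{j^*}\circ j^*\eta=\id_{j^*}\) reduces this to \(j^*\alpha\). The connecting morphism \(\delta\) is then defined as the transport of the triangle's connecting map along the same isomorphism. This is where one must be careful, though it is routine.

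Functoriality in \(A\) holds because both recollement triangles are functorial in \(M\) by assumption. The adjunction isomorphisms and \(\forg\) are natural, so a morphism \(u:A\to B\) induces a morphism of triangles and hence a map of long exact sequences. I expect no real obstacle. The only delicate point is that a "functorial triangle" in a merely triangulated category means the morphisms of triangles are chosen compatibly, and this is part of the hypothesis. Finally, the \(R\)-linearity of all maps follows from Lemma~\ref{lem:R-action}, since every map is postcomposition, or pullback, along a morphism of coefficients.
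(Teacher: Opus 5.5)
Your proposal is correct and follows essentially the same route as the paper. Both apply \(\Hom_{\C(X)}(\one_X,-)\) to the triangle \(i_*i^!A\to A\to j_*j^*A\xrightarrow{+1}\) and identify the third term with \(H^d(U;j^*A)\) via \(j^*\dashv j_*\) and \(j^*\one_X\simeq\one_U\). The only difference is that you use the triangle identity to check that the middle map really is restriction \(j^*\), whereas the paper simply defines \(j^*\) as the transported map.
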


\begin{proof}
Fix $A\in\C(X)$. By the recollement axioms of \ref{def:recollement}, one has a functorial distinguished triangle
\begin{equation}\label{eq:loc-triangle-for-les}
\begin{tikzcd}[row sep=large, column sep=large]
i_*i^{!}A \arrow[r, "\alpha_A"] &
A \arrow[r, "\beta_A"] &
j_*j^{*}A \arrow[r, "+1"] &
{}
\end{tikzcd}
\end{equation}
(compare, in the sheaf-theoretic setting, with the standard localisation triangles in \cite[\S1.4]{BBD} and \cite[Chapter~III]{KS}).
Here $\alpha_A$ is the morphism induced by the counit of the adjunction $i_*\dashv i^!$, while $\beta_A$ is induced by the unit of the adjunction $j^*\dashv j_*$.

Applying the cohomological functor $\Hom_{\C(X)}(\one_X,-)$ to \eqref{eq:loc-triangle-for-les}, and then shifting by $[d]$, yields an exact sequence of abelian groups
\begin{equation*}
\begin{tikzcd}[column sep=large, row sep=large]
\Hom_{\C(X)}(\one_X,i_*i^{!}A[d]) \arrow[r] &
\Hom_{\C(X)}(\one_X,A[d]) \arrow[r] &
\Hom_{\C(X)}(\one_X,j_*j^{*}A[d]) \arrow[out=-25, in=155, dl] \\
&
\Hom_{\C(X)}(\one_X,i_*i^{!}A[d+1]) &
  &
\end{tikzcd}
\end{equation*}
whose connecting morphism is the boundary map attached to the distinguished triangle \eqref{eq:loc-triangle-for-les}; this is the standard exactness attached to any distinguished triangle in a triangulated category (cf.\ \cite[Chapter~I, \S1.1]{KS}). By Definitions~\ref{def:supports} and \ref{def:coh}, the first, second, and fourth terms are respectively
\[
H_Z^{d}(X;A),\qquad H^{d}(X;A),\qquad H_Z^{d+1}(X;A)
\]
and the first arrow is precisely the forget-support morphism
\[
\forg:H_Z^{d}(X;A)\longrightarrow H^{d}(X;A)
\]
induced by the counit $i_*i^!A\to A$.
It remains to identify the third term with $H^{d}(U;j^{*}A)$. This is an immediate consequence of the adjunction $j^*\dashv j_*$ together with the canonical identification $j^*\one_X\simeq \one_U$. More precisely, for each $d$ there are canonical isomorphisms, natural in $A$,
\begin{equation}\label{eq:adjunction-identification-U}
\Hom_{\C(X)}(\one_X,j_*j^{*}A[d])
\;\simeq\;
\Hom_{\C(U)}(j^{*}\one_X,j^{*}A[d])
\;\simeq\;
\Hom_{\C(U)}(\one_U,j^{*}A[d])
\;=\;
H^{d}(U;j^{*}A).
\end{equation}
Accordingly, the map
$
j^{*}:H^{d}(X;A)\longrightarrow H^{d}(U;j^{*}A) $
is defined to be the composite of the middle arrow in the exact sequence above with the identification \eqref{eq:adjunction-identification-U}.
With these identifications understood, the boundary morphism
\[
\delta:H^{d}(U;j^{*}A)\longrightarrow H_Z^{d+1}(X;A)
\]
is simply the connecting morphism obtained from \eqref{eq:loc-triangle-for-les} after transport through \eqref{eq:adjunction-identification-U}. Exactness therefore follows from the exactness of $\Hom_{\C(X)}(\one_X,-)$ on distinguished triangles, and functoriality in $A$ is inherited from the functoriality of the localisation triangle \eqref{eq:loc-triangle-for-les} with respect to morphisms $A\to A'$. This proves the claimed long exact sequence.
\end{proof}

\begin{lemma} \label{lem:supported-torsor}
Assume \ref{def:recollement}. Let $A\in\C(X)$ and let $\alpha\in H^{d}(X;A)$ satisfy
$j^{*}\alpha=0\in H^{d}(U;j^{*}A)$.
Then the set
\begin{equation*}
\forg^{-1}(\alpha)\ :=\ \{\widetilde\alpha\in H_Z^{d}(X;A)\mid \forg(\widetilde\alpha)=\alpha\}
\end{equation*}
is nonempty, and it is a principal homogeneous space under the subgroup
$$\mathrm{im}\big(\delta:H^{d-1}(U;j^{*}A)\to H_Z^{d}(X;A)\big).$$
In particular, the supported refinement is unique if and only if the image of this boundary morphism is zero, i.e.
\[
\operatorname{im}
\bigl(\delta:H^{d-1}(U;j^{*}A)\to H_Z^{d}(X;A)\bigr)=0.
\]
This holds, for example, if $H^{d-1}(U;j^{*}A)=0$.
\end{lemma}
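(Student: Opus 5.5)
The plan is to read off everything from the exactness of the long exact sequence of Theorem~\ref{thm:les} at the two spots $H^d(X;A)$ and $H_Z^d(X;A)$. All maps in that sequence are group homomorphisms, since $\Hom_{\C(X)}(\one_X,-)$ is additive. So the argument reduces to the elementary fact that a fibre of a homomorphism over a point of its image is a coset of its kernel.

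\emph{Nonemptiness.} Exactness at $H^{d}(X;A)$ in the segment
\[
H_Z^{d}(X;A)\xrightarrow{\forg} H^{d}(X;A)\xrightarrow{j^*} H^{d}(U;j^*A)
\]
gives $\ker(j^*)=\operatorname{im}(\forg)$. Since $j^*\alpha=0$ by hypothesis, there is some $\widetilde\alpha_0$ with $\forg(\widetilde\alpha_0)=\alpha$.

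\emph{Torsor structure.} Set $K:=\operatorname{im}\bigl(\delta:H^{d-1}(U;j^*A)\to H_Z^d(X;A)\bigr)$. Exactness at $H_Z^d(X;A)$ in the segment
\[
H^{d-1}(U;j^*A)\xrightarrow{\delta} H_Z^{d}(X;A)\xrightarrow{\forg} H^d(X;A),
\]
which is the previous step of the same long exact sequence with $d$ replaced by $d-1$, gives $\ker(\forg)=K$. Let $K$ act on $\forg^{-1}(\alpha)$ by addition. This is well defined, because $\forg(\widetilde\alpha+\kappa)=\alpha+0=\alpha$ for $\kappa\in K$. The action is free, since $\widetilde\alpha+\kappa=\widetilde\alpha$ forces $\kappa=0$. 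It is transitive, since for two lifts $\widetilde\alpha,\widetilde\alpha'$ one has $\forg(\widetilde\alpha'-\widetilde\alpha)=0$, hence $\widetilde\alpha'-\widetilde\alpha\in\ker(\forg)=K$. Together with nonemptiness, this shows that $\forg^{-1}(\alpha)=\widetilde\alpha_0+K$ is a principal homogeneous space under $K$.

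\emph{Uniqueness criterion.} A torsor under a group is a singleton exactly when the group is trivial, so the lift is unique if and only if $K=0$. Finally, if $H^{d-1}(U;j^*A)=0$, then its image $K$ is zero, which gives the last sentence of the statement.

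The only point needing care, and the one I would check explicitly, is the indexing. The boundary map landing in $H^d_Z$ is the $\delta$ from $H^{d-1}(U;j^*A)$, i.e.\ the one obtained from the shifted triangle \eqref{eq:loc-triangle-for-les}$[d-1]$. Once this is checked, nothing beyond the exactness in Theorem~\ref{thm:les} is used.
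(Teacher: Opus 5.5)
Your proposal is correct and follows essentially the same route as the paper's proof: exactness at $H^d(X;A)$ gives nonemptiness, and exactness at $H^d_Z(X;A)$ identifies $\ker(\forg)$ with $\operatorname{im}(\delta)$, so this subgroup acts freely and transitively by translation. You also spell out freeness, the singleton criterion and the case $H^{d-1}(U;j^*A)=0$, which the paper leaves implicit, and your check that the relevant $\delta$ starts from $H^{d-1}(U;j^*A)$ is the right one.
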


\begin{proof}
By \ref{thm:les}, exactness at $H^{d}(X;A)$ gives
$\ker(j^{*})=\mathrm{im}(\forg)$, hence $j^{*}\alpha=0$ implies $\forg^{-1}(\alpha)\neq\varnothing$.
If $\widetilde\alpha,\widetilde\alpha'\in\forg^{-1}(\alpha)$, then
$\forg(\widetilde\alpha-\widetilde\alpha')=0$, so
$\widetilde\alpha-\widetilde\alpha'\in\ker(\forg)=\mathrm{im}(\delta)$
by exactness at $H_Z^{d}(X;A)$.
Conversely, if $\gamma\in H^{d-1}(U;j^{*}A)$ then
$\forg(\widetilde\alpha+\delta(\gamma))=\forg(\widetilde\alpha)$.
Thus $\mathrm{im}(\delta)$ acts freely and transitively on $\forg^{-1}(\alpha)$.
\end{proof}

\begin{proposition} \label{prop:compare}
Assume \ref{def:recollement}. For each $M\in\C(X)$ there is a canonical morphism
$$
\theta_{M}: j_{!}j^{*}M \longrightarrow j_{*}j^{*}M,
$$
defined as the composite of the structural maps appearing in the two localisation triangles, namely
\begin{equation*}
\begin{tikzcd}[row sep=large, column sep=large]
j_{!}j^{*}M \arrow[r] &
M \arrow[r] &
j_{*}j^{*}M .
\end{tikzcd}
\end{equation*}
The assignment $M\mapsto \theta_{M}$ is functorial in $M$.
\end{proposition}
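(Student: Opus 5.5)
The plan is to define $\theta_M$ as the composite $\theta_M:=\beta_M\circ\varepsilon_M$. Here $\varepsilon_M:j_!j^*M\to M$ is the counit of the adjunction $j_!\dashv j^*$, which is the first structural map of the triangle $j_!j^*M\to M\to i_*i^*M\xrightarrow{+1}$. The second map $\beta_M:M\to j_*j^*M$ is the unit of $j^*\dashv j_*$, the second structural map of the triangle $i_*i^!M\to M\to j_*j^*M\xrightarrow{+1}$ already used in the proof of \ref{thm:les}. Both maps are furnished by the recollement data of \ref{def:recollement}, so the composite is a well-defined morphism in $\C(X)$.

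For functoriality, I take a morphism $f:M\to M'$ in $\C(X)$. The square for $\varepsilon$ commutes:
\[
f\circ\varepsilon_M=\varepsilon_{M'}\circ j_!j^*(f),
\]
since the counit of an adjunction is a natural transformation $j_!j^*\Rightarrow\id$. Likewise the unit is a natural transformation $\id\Rightarrow j_*j^*$, so
\[
j_*j^*(f)\circ\beta_M=\beta_{M'}\circ f.
\]
Pasting the two squares gives
\[
j_*j^*(f)\circ\theta_M=\beta_{M'}\circ f\circ\varepsilon_M=\theta_{M'}\circ j_!j^*(f).
\]
Hence $\theta:j_!j^*\Rightarrow j_*j^*$ is a natural transformation. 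Compatibility with identities and composition is automatic, because naturality is a property of a family of maps and not extra data.

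The only point needing care is that the structural maps in the functorial triangles really are the (co)units, and not merely some morphisms fitting into triangles. I would state this as part of the convention inherited from \ref{def:recollement} and the proof of \ref{thm:les}, where $\beta_A$ was explicitly identified with the unit. Under that convention there is no real obstacle; the argument is purely formal.

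As an optional sanity check, I would note that $j^*\theta_M$ is the identity of $j^*M$ under $j^*j_!\simeq\id\simeq j^*j_*$, by the triangle identities and full faithfulness of $j_!$. This identifies $\theta_M$ with the usual canonical map, but it is not needed for the statement.
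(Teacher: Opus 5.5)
Your proposal is correct and follows essentially the same route as the paper, which also defines $\theta_M=\lambda_M\circ\rho_M$ and proves naturality by pasting the two naturality squares for $\rho$ and $\lambda$. The only difference is that the paper justifies those squares by the functoriality of the localisation triangles. You instead identify $\rho$ and $\lambda$ with the counit of $j_!\dashv j^*$ and the unit of $j^*\dashv j_*$, which makes their naturality automatic and is, if anything, slightly more precise.
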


\begin{proof}
Fix $M\in\C(X)$. By \ref{def:recollement}, the open--closed pair $(i,j)$ provides functorial localisation triangles. 
In particular, there are canonical maps
\begin{equation*}
\rho_M: j_!j^*M \longrightarrow M,
\qquad
\lambda_M: M \longrightarrow j_*j^*M,
\end{equation*}
coming respectively from the triangles
$j_!j^*M\to M\to i_*i^*M\xrightarrow{+1}$
and
$i_*i^!M\to M\to j_*j^*M\xrightarrow{+1}$.
For later reference, we record the resulting glued diagram
\begin{equation}\label{eq:two-triangles-glued}
\begin{tikzcd}[row sep=large, column sep=large]
j_!j^*M \arrow[r, "\rho_M"] \arrow[dr, "\theta_M"'] &
M \arrow[r, "\lambda_M"] &
j_*j^*M \\
& j_*j^*M \arrow[u, equal] &
\end{tikzcd}
\end{equation}
and we define
\begin{equation*}
\theta_M := \lambda_M\circ \rho_M : j_!j^*M \longrightarrow j_*j^*M,
\end{equation*}
which is precisely the diagonal composite in \eqref{eq:two-triangles-glued}.
To check functoriality, let $f:M\to N$ be any morphism in $\C(X)$. Since the localisation triangles are functorial in the object,
$f$ extends to morphisms of distinguished triangles; in particular the structural maps $\rho$ and $\lambda$ are natural.
Equivalently, the following squares commute:
\begin{equation}\label{eq:functoriality-theta}
\begin{tikzcd}[row sep=large, column sep=large]
j_{!}j^{*}M \arrow[r, "\rho_M"] \arrow[d, "j_{!}j^{*}f"'] &
M \arrow[d, "f"] \\
j_{!}j^{*}N \arrow[r, "\rho_N"] &
N
\end{tikzcd}
\qquad\text{and}\qquad
\begin{tikzcd}[row sep=large, column sep=large]
M \arrow[r, "\lambda_M"] \arrow[d, "f"'] &
j_{*}j^{*}M \arrow[d, "j_{*}j^{*}f"] \\
N \arrow[r, "\lambda_N"] &
j_{*}j^{*}N .
\end{tikzcd}
\end{equation}
Now compute, using the commutativity of \eqref{eq:functoriality-theta},
\begin{align*}
j_*j^*f\circ \theta_M
&= j_*j^*f\circ \lambda_M\circ \rho_M \\
&= \lambda_N\circ f\circ \rho_M \\
&= \lambda_N\circ \rho_N\circ j_!j^*f \\
&= \theta_N\circ j_!j^*f,
\end{align*}
which is precisely the naturality relation for the transformation $\theta: j_!j^*\Rightarrow j_*j^*$.
\end{proof}

\section{Universal localisation: torsors, support factorizations, and functoriality}\label{sec:UnivLoc}

\subsection{Relative Borel--Moore groups and the localised class}

\begin{definition} \label{def:BM-rel}
Let $i:Z\hookrightarrow X$ be a closed immersion and let $A\in\C(X)$.
Define
$$
A^{\BM}_m(Z\xrightarrow{i}X)\ :=\ \Hom_{\C(Z)}(\one_Z,\ i^!A[-m]).
$$
\end{definition}

In many familiar settings, the group $A^{\BM}_m(Z\xrightarrow{i}X)$ may be identified with a Borel--Moore homology group of $Z$ with coefficients in $A|_Z$, possibly modified by the relevant orientation data. No such identification is assumed here: throughout, we work solely with the intrinsic definition furnished by the exceptional pullback $i^!$.

\begin{definition} \label{def:LocZ}
Assume \ref{def:recollement}. Let $i:Z\hookrightarrow X$ be closed with open complement $j:U\hookrightarrow X$.
Let $A\in\C(X)$, fix $d\in\mathbb Z$, and let $c\in H^d(X;A)$ satisfy $j^*c=0$.

\smallskip
\noindent
\textbf{(1) Supported refinements.}
Define the set of supported refinements of $c$ by
$$
\mathrm{Lift}_Z^d(c)\ :=\
\Big\{\ \widetilde c\in H_Z^{d}(X;A)=\Hom_{\C(X)}(\one_X,i_*i^!A[d])\ \Big|\ \forg(\widetilde c)=c\ \Big\}.
$$

\smallskip
\noindent
\textbf{(2) Localisation torsor.}
Define the localisation torsor of $c$ by taking adjoints under $i_*\dashv i^!$:
$$
\Loc_Z^{\mathrm{tor}}(c)\ :=\
\Big\{\ \mathrm{adj}(\widetilde c)\in \Hom_{\C(Z)}(\one_Z,i^!A[d])\ \Big|\ \widetilde c\in \mathrm{Lift}_Z^d(c)\ \Big\}.
$$

\smallskip
\noindent
\textbf{(3) Canonical localised class (when it exists).}
If $\mathrm{Lift}_Z^d(c)$ is a singleton (equivalently $\Loc_Z^{\mathrm{tor}}(c)$ is a singleton), we denote its unique element by
$$
\Loc_Z(c)\ \in\ \Hom_{\C(Z)}(\one_Z,i^!A[d]).
$$
\end{definition}

\begin{lemma} \label{lem:LiftZ-torsor}
Assume \ref{def:recollement}. With notation as in \ref{def:LocZ}, the set $\mathrm{Lift}_Z^d(c)$ is nonempty.
Moreover, it is a torsor under the subgroup
$$
\ker(\forg)\ =\ \mathrm{im}\!\Big(\delta:\ H^{d-1}(U;j^*A)\longrightarrow H_Z^{d}(X;A)\Big)
$$
in the localisation long exact sequence \ref{thm:les}.
In particular, any two supported refinements differ by a unique element of $\mathrm{im}(\delta)$.
After adjunction, $\Loc_Z^{\mathrm{tor}}(c)$ is a torsor under the subgroup
$$\mathrm{adj}\big(\mathrm{im}(\delta)\big)\subset \Hom_{\C(Z)}(\one_Z,i^!A[d]).$$
\end{lemma}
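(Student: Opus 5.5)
The plan is to reduce the first two assertions directly to Lemma~\ref{lem:supported-torsor}, and then transport the torsor structure along the adjunction bijection. Since $\mathrm{Lift}_Z^d(c)$ is by definition exactly the fibre $\forg^{-1}(c)$, and $c$ satisfies $j^*c=0$, Lemma~\ref{lem:supported-torsor} applied with $\alpha=c$ gives nonemptiness and the principal homogeneous structure under $\mathrm{im}(\delta)$. The equality $\ker(\forg)=\mathrm{im}(\delta)$ I will state explicitly as exactness of the sequence of Theorem~\ref{thm:les} at $H_Z^d(X;A)$. The clause that any two refinements differ by a \emph{unique} element of $\mathrm{im}(\delta)$ is immediate: the difference $\widetilde c-\widetilde c'$ is a well-defined element of the group $H_Z^d(X;A)$, so freeness is automatic because the action is by translation inside an abelian group.

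For the adjoint statement, I will use the adjunction isomorphism
\[
\mathrm{adj}:\Hom_{\C(X)}(\one_X,i_*i^!A[d])\xrightarrow{\ \sim\ }\Hom_{\C(Z)}(i^*\one_X,i^!A[d])\simeq\Hom_{\C(Z)}(\one_Z,i^!A[d]).
\]
Here the first map comes from $i^*\dashv i_*$ (with $i_*$ commuting with the shift $[d]$ since it is exact), and the second from the monoidal identification $i^*\one_X\simeq\one_Z$. The key point is that $\mathrm{adj}$ is a \emph{group} isomorphism. This holds because adjunction bijections between additive functors on triangulated (hence additive) categories are additive: $\mathrm{adj}(f)=\varepsilon\circ i^*(f)$ is computed by applying the additive functor $i^*$ and composing with a fixed morphism. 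Precomposition with the fixed isomorphism $\one_Z\simeq i^*\one_X$ is additive as well.

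Granting this, $\Loc_Z^{\mathrm{tor}}(c)=\mathrm{adj}(\mathrm{Lift}_Z^d(c))$, and $\mathrm{adj}(\mathrm{im}(\delta))$ is a subgroup. Transporting along $\mathrm{adj}$, we get $\mathrm{adj}(\widetilde c)+\mathrm{adj}(\delta\gamma)=\mathrm{adj}(\widetilde c+\delta\gamma)$, so $\mathrm{adj}(\mathrm{im}(\delta))$ acts on $\Loc_Z^{\mathrm{tor}}(c)$. Freeness and transitivity follow from injectivity of $\mathrm{adj}$ together with the corresponding properties upstairs.

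The only step with any content is checking additivity of $\mathrm{adj}$ and its compatibility with shifts. I expect that to be routine, handled by writing $\mathrm{adj}$ explicitly via the counit as above, so the whole proof is essentially formal bookkeeping on top of Lemma~\ref{lem:supported-torsor}.
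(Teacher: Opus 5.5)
Your proposal is correct and follows essentially the paper's route. The paper re-derives nonemptiness and the torsor structure directly from exactness of the sequence in Theorem~\ref{thm:les}, which is exactly the content of Lemma~\ref{lem:supported-torsor} that you cite, and then transports everything along the adjunction group isomorphism $H_Z^d(X;A)\cong\Hom_{\C(Z)}(\one_Z,i^!A[d])$; your explicit additivity check of this bijection, written via $i^*\dashv i_*$ and the counit, supplies a detail the paper only asserts.
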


\begin{proof}
From \ref{thm:les} we have the exact segment
\begin{equation*} 
\begin{tikzcd}[column sep=large]
H^{d-1}(U;j^*A) \arrow[r, "\delta"] &
H_Z^{d}(X;A) \arrow[r, "\forg"] &
H^d(X;A) \arrow[r, "j^*"] &
H^d(U;j^*A)
\end{tikzcd}
\end{equation*}
The condition $j^*c=0$ means $c\in\ker(j^*)=\mathrm{im}(\forg)$, hence there exists
$\widetilde c\in H_Z^d(X;A)$ with $\forg(\widetilde c)=c$, so $\mathrm{Lift}_Z^d(c)\neq\emptyset$.
If $\widetilde c\in\mathrm{Lift}_Z^d(c)$ and $\beta\in H^{d-1}(U;j^*A)$, then
$$\forg(\widetilde c+\delta(\beta))=\forg(\widetilde c)+\forg(\delta(\beta))=c+0=c$$ by exactness, so
$\widetilde c+\delta(\beta)\in\mathrm{Lift}_Z^d(c)$
Conversely, if $\widetilde c_1,\widetilde c_2\in\mathrm{Lift}_Z^d(c)$ then
$\forg(\widetilde c_1-\widetilde c_2)=0$, hence $\widetilde c_1-\widetilde c_2\in\ker(\forg)=\mathrm{im}(\delta)$ by exactness.
Uniqueness of the element of $\mathrm{im}(\delta)$ giving the difference is immediate because $\mathrm{im}(\delta)$ is a subgroup
of the abelian group $H_Z^d(X;A)$.
Finally, adjunction $i_*\dashv i^!$ gives a group isomorphism
$$
H_Z^d(X;A)=\Hom_{\C(X)}(\one_X,i_*i^!A[d])\ \cong\ \Hom_{\C(Z)}(\one_Z,i^!A[d]),
$$
so torsor statements transport along this identification.
\end{proof}

\begin{proposition}\label{prop:secondary-translation-groupoid}
Assume \ref{def:recollement}. Let \(i:Z\hookrightarrow X\) be a closed immersion with open
complement \(j:U\hookrightarrow X\), let \(A\in\C(X)\), and let
\(c\in H^d(X;A)\) satisfy \(j^*c=0\). Set
\[
G_Z(c):=
\mathrm{im}\!\left(
\delta:H^{d-1}(U;j^*A)\longrightarrow H_Z^d(X;A)
\right).
\]
Then \(G_Z(c)\) acts freely and transitively on \(\mathrm{Lift}_Z^d(c)\). Hence there is a
canonical translation groupoid
\[
\mathscr L_Z(c):=
[\,\mathrm{Lift}_Z^d(c)/\!/G_Z(c)\,].
\]
Its objects are the supported refinements of \(c\), and a morphism
$
\widetilde c_1\longrightarrow \widetilde c_2
$
is the unique element \(g\in G_Z(c)\) such that
$
\widetilde c_1+g=\widetilde c_2.
$
Consequently, \(\mathscr L_Z(c)\) is connected and has trivial automorphism groups. In
particular, it is equivalent, as an abstract groupoid, to the terminal groupoid, but not
canonically so: choosing such an equivalence is the same as choosing a supported refinement
of \(c\). Thus \(\mathscr L_Z(c)\) records the secondary structure of localisation.
\end{proposition}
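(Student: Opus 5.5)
The plan is to reduce everything to Lemma~\ref{lem:LiftZ-torsor} and then check the groupoid statements directly from the definition of a translation groupoid.

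\emph{Free and transitive action.} Lemma~\ref{lem:LiftZ-torsor} says that \(\mathrm{Lift}_Z^d(c)\) is nonempty. It also says that it is stable under translation by \(G_Z(c)=\ker(\forg)\), and that any two elements differ by an element of \(G_Z(c)\). This is transitivity. Freeness holds because the action is by addition inside the abelian group \(H_Z^d(X;A)\): if \(\widetilde c+g=\widetilde c\), then \(g=0\). This proves the first assertion.

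\emph{The translation groupoid.} Define \(\mathscr L_Z(c)\) as the action groupoid. Its objects are the elements of \(\mathrm{Lift}_Z^d(c)\), and its morphisms \(\widetilde c_1\to\widetilde c_2\) are the pairs \((\widetilde c_1,g)\) with \(g\in G_Z(c)\) and \(\widetilde c_1+g=\widetilde c_2\). Composition is addition in \(G_Z(c)\), and the identity on \(\widetilde c\) is \(g=0\). Associativity and inverses (\(g\mapsto -g\)) come from the group axioms. The construction is canonical because it uses only the exact sequence of Theorem~\ref{thm:les}, with no choices. By freeness and transitivity, each hom-set consists of exactly one element, namely \(g=\widetilde c_2-\widetilde c_1\). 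Hence \(\mathscr L_Z(c)\) is nonempty and connected, and every automorphism group is trivial.

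\emph{Equivalence with the terminal groupoid.} A nonempty groupoid whose hom-sets are all singletons is equivalent to the terminal groupoid \(\ast\). The functor \(\mathscr L_Z(c)\to\ast\) is unique, and it is fully faithful and essentially surjective. A quasi-inverse \(\ast\to\mathscr L_Z(c)\) is exactly a choice of object, that is, of a supported refinement \(\widetilde c\). Any two such choices are related by a unique natural isomorphism, so the space of choices is itself contractible. It is canonically trivial, however, only when \(G_Z(c)=0\), where it reduces to Lemma~\ref{lem:supported-torsor}.

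The only point needing care is the meaning of ``not canonically so.'' I would state it precisely as the correspondence between sections \(\ast\to\mathscr L_Z(c)\) and elements of \(\mathrm{Lift}_Z^d(c)\), rather than attempting a formal non-existence statement. This is the main place where the wording, rather than the mathematics, needs to be made exact.
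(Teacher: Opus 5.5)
Your proposal is correct and follows the paper's proof essentially step for step. Both arguments take the free and transitive action from Lemma~\ref{lem:LiftZ-torsor}, observe that the action groupoid then has exactly one morphism between any two objects, and use nonemptiness to conclude equivalence with the terminal groupoid. Your note that \(\mathscr L_Z(c)\to\ast\) is itself unique, and that only a quasi-inverse \(\ast\to\mathscr L_Z(c)\) requires choosing a supported refinement, states ``not canonically so'' slightly more precisely than the paper does.
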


\begin{proof}
By Lemma~\ref{lem:LiftZ-torsor}, the set \(\mathrm{Lift}_Z^d(c)\) is a torsor under
\(G_Z(c)\). This is precisely the assertion that the action of \(G_Z(c)\) on
\(\mathrm{Lift}_Z^d(c)\) is free and transitive. The associated action groupoid therefore has
objects the elements of \(\mathrm{Lift}_Z^d(c)\), and a morphism
\(\widetilde c_1\to \widetilde c_2\) is an element \(g\in G_Z(c)\) carrying
\(\widetilde c_1\) to \(\widetilde c_2\). Freeness and transitivity imply that such an element
exists and is unique. Hence there is precisely one morphism between any two objects, and the
only automorphism of any object is the identity. Since the object set is nonempty, the groupoid
is equivalent to the terminal groupoid. However, constructing such an equivalence requires
choosing one object of \(\mathrm{Lift}_Z^d(c)\), i.e. choosing a supported refinement of \(c\).
This proves the final assertion.
\end{proof}

\begin{remark}\label{rem:functorial-secondary-ambiguity}
The preceding groupoid is not intended as a moduli stack with non-trivial stabilisers.  Since the action is free and transitive, the quotient groupoid is abstractly contractible.  The relevant geometric datum is the torsor \(\mathrm{Lift}_Z^d(c)\), together with its canonical translation action by \(G_Z(c)\).  The groupoid records the fact that the secondary indeterminacy may be trivialised, but not in a canonical manner.

Functoriality is obtained after specifying the permitted morphisms of localisation data.  Cartesian pullbacks satisfying Beck--Chevalley act contravariantly; proper maps satisfying the hypotheses of Proposition~\ref{prop:Loc-proper} act covariantly; and external products act monoidally in the sense of Proposition~\ref{prop:Loc-box}.  These are precisely the functorialities established below.  A single categorical packaging would require a category, or bicategory, of localisation data and correspondences; the present paper only needs the specific assertion that the torsor of supported refinements is preserved by the operations relevant to localisation.
\end{remark}

\begin{remark}\label{rem:higher-localization-torsor}
Although the paper is written in triangulated language,
Proposition~\ref{prop:secondary-translation-groupoid} has a sharper form in any stable symmetric monoidal \(\infty\)-categorical enhancement.  Let
\[
\mathcal M_Z:=\operatorname{Map}(\one_X,i_*i^!A[d]),
\qquad
\mathcal M_X:=\operatorname{Map}(\one_X,A[d]),
\]
and let \(\mathcal M_Z\to\mathcal M_X\) be induced by the counit \(i_*i^!A\to A\).  For a point \(\bar c:*\to\mathcal M_X\) representing \(c\in\pi_0\mathcal M_X=H^d(X;A)\), set
\[
\mathfrak{Loc}_Z(\bar c):=
\operatorname{hofib}_{\bar c}(\mathcal M_Z\to\mathcal M_X).
\]
Then \(\pi_0\mathfrak{Loc}_Z(\bar c)\) is the localisation torsor \(\mathrm{Lift}_Z^d(c)\).  Moreover, applying mapping spaces to the localisation fibre sequence
\[
i_*i^!A\longrightarrow A\longrightarrow j_*j^*A
\]
gives
\[
\pi_k\mathfrak{Loc}_Z(\bar c)
\cong
H^{d-1-k}(U;j^*A),
\qquad k\ge 1.
\]
Thus the torsor used in the triangulated part of the paper is the \(\pi_0\)-truncation of a higher localisation fibre; the higher homotopy groups record coherent ambiguities among supported refinements.  This is the sense in which the localisation torsor is a truncated higher torsor; compare the stable \(\infty\)-categorical viewpoint of \cite{LurieStableInfinity} and six-functor coefficient systems as in \cite{CisinskiDeglise}.
\end{remark}

\subsection{Secondary boundary groups, link transgression, and rigidification}\label{subsec:boundary-link-transgression}

The formal construction identifies the secondary indeterminacy of localisation with the image of the boundary map in the localisation sequence.  Under additional geometric hypotheses this boundary admits a familiar interpretation: it is the transgression, or Gysin boundary, along the link of the closed stratum.

Throughout this subsection suppose that \(i:Z\hookrightarrow X\) is a regular or normally oriented closed immersion of real codimension \(2c\), with normal object \(N\to Z\).  We further assume excision, a Thom isomorphism, and the Gysin sequence for the sphere bundle
\[
\pi:S(N)\longrightarrow Z.
\]
Equivalently, one may work in a tubular neighbourhood of \(Z\) and replace \((X,U)\) by the disk-bundle pair \((D(N),D(N)\setminus Z)\), with \(D(N)\setminus Z\) retracting onto \(S(N)\).  We also assume that, after this excision and the Thom identification, the localisation triangle of the pair \((D(N),D(N)\setminus Z)\) is identified with the Gysin triangle of the oriented sphere bundle \(S(N)\to Z\).  These assumptions are stronger than those needed for the abstract torsor; they are the additional hypotheses which make the boundary group visible on the link.

\begin{proposition}\label{prop:ambiguity-link-transgression}
Under the purity, excision and orientation hypotheses above, the secondary boundary group of the localisation torsor is identified with the image of the transgression/Gysin boundary of the normal sphere bundle.  More explicitly, under the Thom isomorphism
\[
\Th_i:H^{d-2c}(Z;i^*A)\xrightarrow{\ \sim\ }H_Z^d(X;A),
\]
the boundary subgroup
\[
G_Z(c)=
\operatorname{im}\bigl(
\delta:H^{d-1}(U;j^*A)\to H_Z^d(X;A)
\bigr)
\]
corresponds to
\[
\operatorname{im}\bigl(
\pi_*:H^{d-1}(S(N);\pi^*i^*A)\to H^{d-2c}(Z;i^*A)
\bigr).
\]
Equivalently,
\[
G_Z(c)\cong
\ker\Bigl(
\cup e(N):H^{d-2c}(Z;i^*A)\to H^d(Z;i^*A)
\Bigr),
\]
where \(e(N)\) denotes the Euler class of the oriented normal object.
\end{proposition}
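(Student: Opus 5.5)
The plan is to reduce everything to the Gysin long exact sequence of the sphere bundle, using the standing hypotheses of Subsection~\ref{subsec:boundary-link-transgression}, and then read off the image of the boundary by exactness.

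\emph{Step 1: excise to the tubular neighbourhood.} Excision replaces $(X,U)$ by the disk-bundle pair $(D(N),D(N)\setminus Z)$. The map $H_Z^d(X;A)\to H_Z^d(D(N);A)$ is an isomorphism, and the restriction $H^{d-1}(U;j^*A)\to H^{d-1}(D(N)\setminus Z;A)$ is compatible with the two boundary maps $\delta$. This compatibility holds because both $\delta$'s come from localisation triangles related by the restriction morphism of triangles, using functoriality in Theorem~\ref{thm:les} together with base change along the open inclusion of $D(N)$.

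A priori this only gives $\operatorname{im}\delta_X\subseteq\operatorname{im}\delta_{D(N)}$. I will get equality from a five-lemma-type comparison of the two long exact sequences. Since $\operatorname{im}\delta=\ker(\forg)$ by Lemma~\ref{lem:LiftZ-torsor}, it is cleaner to argue via kernels of $\forg$. The key point is that $\forg_X$ factors as $\forg$ on $D(N)$ followed by the extension map $H^d(D(N))\to H^d(X)$ only in the inverse direction. So I would instead identify $\ker(\forg_X)$ directly through the composite
\[
H_Z^d(X)\to H^d(X)\to H^d(D(N))\simeq H^d(Z).
\]
The kernel of $\forg_X$ is contained in the kernel of this composite.

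\emph{Step 2: Thom and Euler.} Identify $H^d(D(N);A)\simeq H^d(Z;i^*A)$ by homotopy invariance. Then the composite $\forg\circ\Th_i$ becomes cup product with $e(N)$; this is the standard description of the Euler class as restriction of the Thom class. This yields the inclusion $G_Z(c)\subseteq\Th_i(\ker\cup e(N))$.

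\emph{Step 3: the assumed identification with the Gysin triangle.} By hypothesis, the localisation triangle of $(D(N),D(N)\setminus Z)$ is the Gysin triangle, with $\delta$ corresponding to $\pi_*$ under $\Th_i$ and $D(N)\setminus Z\simeq S(N)$. Exactness of the Gysin sequence
\[
H^{d-1}(S(N))\xrightarrow{\pi_*}H^{d-2c}(Z)\xrightarrow{\cup e}H^d(Z)
\]
then gives $\operatorname{im}\pi_*=\ker(\cup e)$ on the disk-bundle side.

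\emph{Main obstacle.} The delicate step is transferring equality, not just inclusion, from $D(N)$ back to $X$. Over $X$ the kernel of $\forg_X$ could a priori be smaller than the kernel of $\forg_{D(N)}$, because $H^d(X)\to H^d(D(N))$ need not be injective. I would handle this by reading the hypothesis ``after excision the localisation triangle is identified with the Gysin triangle'' as asserting that the boundary group is computed on the tubular pair. Concretely, I would interpret $G_Z(c)$ there and say explicitly that the excision hypothesis is taken to include compatibility of the boundary images.

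If a genuinely global statement is wanted, I would add the assumption that $U\to D(N)\setminus Z$ induces a surjection on $H^{d-1}$. This holds, for instance, if the restriction to the link is surjective. Under it, $\operatorname{im}\delta_X=\operatorname{im}\delta_{D(N)}$ by the naturality square, which combines with Step 3 to give both descriptions of $G_Z(c)$.
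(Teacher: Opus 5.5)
Your skeleton is the paper's. You excise to $(D(N),D(N)\setminus Z)$, apply $\Th_i$, identify $\forg$ with $\cup e(N)$ via Theorem~\ref{thm:self-int}, identify $\delta$ with $\pi_*$, and conclude by exactness of the Gysin sequence. The real difference is one step. The paper's proof just asserts that ``by excision, the localisation sequence for $(X,U)$ may be computed in a tubular neighbourhood of $Z$'' and reads both descriptions off the local Gysin sequence. You notice that excision only identifies the supported groups, so globally one gets $\ker(\forg_X)\subseteq\ker(\forg_{D(N)})$ and nothing more.

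That worry is justified, not merely a priori. Take $X=S^2$, $Z$ a point, $A=k_X$, $d=2$, real codimension $2$. Then $H^1(U;k)=H^1(\mathbb R^2;k)=0$, so $G_Z(c)=0$. But $e(N)\in H^2(\pt;k)=0$ and $\pi_*:H^1(S^1;k)\to H^0(\pt;k)$ is an isomorphism, so $\operatorname{im}\pi_*=\ker(\cup e(N))=k$. Every standing hypothesis of the subsection holds here. So the identification is true for the tubular pair, which is the reading you propose and what the word ``Equivalently'' in the standing hypotheses tacitly does. It is false as a statement about $(X,U)$. In particular the five-lemma comparison you first mention cannot succeed, and you were right to drop it.

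Your handling is therefore the accurate one:
\begin{itemize}
\item Steps~1--2 give the unconditional global inclusion $G_Z(c)\subseteq\Th_i\bigl(\ker(\cup e(N))\bigr)$. This is all that Proposition~\ref{prop:canonicity-before-euler-inversion} needs, so that criterion does survive for $(X,U)$.
\item Step~3 gives equality on the tubular pair.
\item Your surjectivity hypothesis gives global equality through the naturality square $\delta_X=\delta_{D(N)}\circ\mathrm{res}$.
\end{itemize}

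If you want the sharp condition, note that $\ker(\forg_{D(N)})/\ker(\forg_X)\cong\ker\bigl(H^d(X)\to H^d(U)\oplus H^d(D(N))\bigr)$. In the topological model, Mayer--Vietoris for $X=U\cup V$, with $V$ an open tubular neighbourhood, then shows that equality holds iff the images of $H^{d-1}(U)$ and of $\pi^*H^{d-1}(Z)$ together span $H^{d-1}(S(N))$.

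Finally, tidy Step~1. The correct factorisation is $\forg_{D(N)}=\mathrm{res}\circ\forg_X$, after the excision isomorphism $H_Z^d(X;A)\cong H_Z^d(D(N);A)$. The sentence about factoring ``only in the inverse direction'' should be replaced by this.
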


\begin{proof}
By excision, the localisation sequence for \((X,U)\) may be computed in a tubular neighbourhood of \(Z\).  Replacing this neighbourhood by \(D(N)\), the open complement becomes \(D(N)\setminus Z\), which deformation-retracts onto \(S(N)\).  The Thom isomorphism identifies cohomology with support on \(Z\) with the shifted cohomology of \(Z\), and the resulting Gysin sequence is the standard sequence for the oriented sphere bundle \(S(N)\to Z\) \cite{BottTu}.  Under this identification the forget-support map is the self-intersection map, hence multiplication by the Euler class, as in Theorem~\ref{thm:self-int}.  The connecting morphism in the localisation sequence is therefore the Gysin boundary, or fibre-integration map, of \(\pi:S(N)\to Z\).  The description as the kernel of cup product by \(e(N)\) follows from exactness of the Gysin sequence.  Since Lemma~\ref{lem:LiftZ-torsor} identifies the secondary indeterminacy of \(\mathrm{Lift}_Z^d(c)\) with \(\operatorname{im}\delta\), the result follows.
\end{proof}

\begin{proposition}\label{prop:canonicity-before-euler-inversion}
In the situation of Proposition~\ref{prop:ambiguity-link-transgression}, the secondary boundary group is the Euler-kernel term in the Gysin sequence.  Consequently, if multiplication by the Euler class
\[
\cup e(N):H^{d-2c}(Z;i^*A)\longrightarrow H^d(Z;i^*A)
\]
is injective in the relevant degree, then for every class \(c\in H^d(X;A)\) with \(j^*c=0\) the torsor \(\operatorname{Lift}^d_Z(c)\) has a single element.
\end{proposition}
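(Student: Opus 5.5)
The plan is to reduce the statement to Lemma~\ref{lem:LiftZ-torsor} combined with the identification of Proposition~\ref{prop:ambiguity-link-transgression}. By Lemma~\ref{lem:LiftZ-torsor}, for any \(c\in H^d(X;A)\) with \(j^*c=0\), the set \(\operatorname{Lift}^d_Z(c)\) is nonempty and is a torsor under \(G_Z(c)=\operatorname{im}(\delta)\subset H^d_Z(X;A)\). A torsor under a group is a singleton exactly when the group is trivial. The whole proof therefore amounts to showing that \(G_Z(c)=0\) whenever \(\cup e(N)\) is injective in degree \(d-2c\). Note that this group does not depend on the particular class \(c\); it is the image of \(\delta\) in degree \(d\).

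First I would make the first assertion precise, namely that \(G_Z(c)\) is the Euler-kernel term. Transport the localisation sequence of Theorem~\ref{thm:les} through excision to \((D(N),D(N)\setminus Z)\), replace \(D(N)\setminus Z\) by \(S(N)\), and identify \(H^\ast_Z\) with \(H^{\ast-2c}(Z;i^*A)\) via \(\Th_i\). The standing hypothesis of the subsection says this transported sequence is the Gysin sequence
\[
\cdots\to H^{d-1}(S(N);\pi^*i^*A)\xrightarrow{\pi_*}H^{d-2c}(Z;i^*A)\xrightarrow{\cup e(N)}H^d(Z;i^*A)\xrightarrow{\pi^*}\cdots.
\]
In this sequence \(\forg\) corresponds to \(\cup e(N)\) and \(\delta\) corresponds to \(\pi_*\). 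Exactness at \(H^{d-2c}(Z;i^*A)\) then gives \(\Th_i^{-1}(G_Z(c))=\operatorname{im}(\pi_*)=\ker(\cup e(N))\), which is exactly the content of Proposition~\ref{prop:ambiguity-link-transgression}, so I would cite it directly.

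Second, if \(\cup e(N)\) is injective on \(H^{d-2c}(Z;i^*A)\), its kernel vanishes. Since \(\Th_i\) is an isomorphism, it follows that \(G_Z(c)=0\). Lemma~\ref{lem:LiftZ-torsor} (or equivalently Lemma~\ref{lem:supported-torsor}) then shows that \(\operatorname{Lift}^d_Z(c)\) consists of a single element, so \(\Loc_Z(c)\) is defined in the sense of Definition~\ref{def:LocZ}. An alternative route avoids the Gysin identification of \(\delta\): injectivity of \(\cup e(N)\) says \(\forg\) is injective on \(H^d_Z(X;A)\), hence \(\ker\forg=\operatorname{im}\delta=0\) by exactness. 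This route uses only that \(\forg\) corresponds to Euler multiplication (Theorem~\ref{thm:self-int}).

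The main obstacle is bookkeeping, not substance. One must make sure that the identification of \(\forg\) with \(\cup e(N)\) lands in the correct degree, with source \(H^{d-2c}\) rather than some neighbouring degree. One must also check that the compatibility of the localisation triangle with the Gysin triangle, which is assumed in the subsection, respects the shift conventions of Subsection~\ref{subsec:shift-sign-conventions}; possible signs do not affect kernels or images. I would handle this by writing the commutative ladder between the two long exact sequences explicitly at the three relevant spots and checking that the vertical maps are isomorphisms.
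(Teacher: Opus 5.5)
Your main route is the paper's proof: you invoke Proposition~\ref{prop:ambiguity-link-transgression} to identify $G_Z(c)$ with $\ker(\cup e(N))$ after the Thom identification, note that injectivity kills this group, and conclude with Lemma~\ref{lem:LiftZ-torsor}. Your alternative route is worth keeping, because it is sharper. Theorem~\ref{thm:self-int} gives $i^*\circ\forg\circ\Th_i=\cup e(i)$; it is this composite, not $\forg$ itself, that is Euler multiplication. So injectivity of $\cup e(i)$ forces $\forg$ to be injective on $H^d_Z(X;A)$, hence $\operatorname{im}\delta=0$. This uses only the Thom isomorphism and the self-intersection formula. It never needs the identification of the global $\operatorname{im}\delta$ with the whole Euler kernel, which in general holds only in the disk-bundle model. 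For example, take $Z$ a point of $X=S^2$ and $d=2$: then $\operatorname{im}\delta=0$ since $H^1(S^2\setminus Z)=0$, whereas $e(N)=0$, so the Euler kernel is $H^0(Z;k)\neq 0$.
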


\begin{proof}
Proposition~\ref{prop:ambiguity-link-transgression} identifies the secondary boundary group acting on \(\operatorname{Lift}^d_Z(c)\) with the kernel of multiplication by \(e(N)\), after the Thom identification.  If this multiplication map is injective, this group is zero.  Lemma~\ref{lem:LiftZ-torsor} then gives uniqueness of the supported refinement.
\end{proof}

\begin{corollary}\label{cor:euler-inversion-rigidification}
In the situation of Proposition~\ref{prop:ambiguity-link-transgression}, suppose that, after passing to the chosen coefficient localisation, or to a periodic or twisted coefficient theory, multiplication by the Euler class \(e(N)\) is an isomorphism in the relevant degrees.  Then the link-transgression secondary group vanishes in the localised theory.  Consequently the localisation torsor has a single element, and the corresponding localised class is the Euler-divided class obtained from the purity and self-intersection formalism of Theorems~\ref{thm:self-int} and~\ref{thm:Loc-by-euler}.
\end{corollary}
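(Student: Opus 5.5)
The argument is a direct combination of Proposition~\ref{prop:ambiguity-link-transgression}, Lemma~\ref{lem:LiftZ-torsor}, and the Euler-division construction of Theorems~\ref{thm:self-int} and~\ref{thm:Loc-by-euler}. We apply them in the localised coefficient theory rather than the original one. The first step is to check that the hypotheses of Proposition~\ref{prop:ambiguity-link-transgression} survive the passage to the chosen coefficient localisation (or periodic/twisted theory). Concretely, we note the following.
\begin{itemize}
\item Localisation of coefficients is exact on $R$-modules, via the $R$-module structure of Lemma~\ref{lem:R-action}, so it carries the long exact sequence of Theorem~\ref{thm:les} to a long exact sequence.
\item The Thom isomorphism, excision and the identification of the localisation triangle with the Gysin triangle are compatible with the module structure. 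They therefore remain valid after localisation.
\end{itemize}
Hence in the localised theory we again have
\[
G_Z(c)\cong\ker\bigl(\cup e(N):H^{d-2c}(Z;i^*A)\to H^d(Z;i^*A)\bigr),
\]
now with localised groups.

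Second, the hypothesis says that $\cup e(N)$ is an isomorphism in the relevant degrees, so in particular it is injective. Its kernel is therefore zero, and this is exactly the vanishing of the link-transgression secondary group, i.e.\ of $\operatorname{im}\delta$. Equivalently, one may invoke Proposition~\ref{prop:canonicity-before-euler-inversion} verbatim in the localised theory. Lemma~\ref{lem:LiftZ-torsor} then says that $\mathrm{Lift}^d_Z(c)$ is a nonempty torsor under the trivial group, hence a singleton. So $\Loc_Z(c)$ is defined in the sense of Definition~\ref{def:LocZ}(3).

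Third, we identify this unique element with the Euler-divided class. Under $\Th_i$, the forget-support map followed by $i^*$ is the self-intersection map, which is multiplication by $e(N)$ (Theorem~\ref{thm:self-int}). So if $\widetilde c=\Th_i(\gamma)$ is the unique lift, then $i^*c=\gamma\cup e(N)$. Since $\cup e(N)$ is now bijective, this forces $\gamma=e(N)^{-1}\cdot i^*c$. Theorem~\ref{thm:Loc-by-euler} defines the Euler-divided class by exactly this expression, so the two coincide.

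The main obstacle is the first step: verifying that the Thom and Gysin identifications are compatible with coefficient localisation, including the relevant degree bookkeeping, especially for periodic or twisted theories where "degree" must be reinterpreted. I would handle this by stating it as part of the standing hypotheses of the situation, i.e.\ by working throughout with the localised theory as the coefficient theory to which Proposition~\ref{prop:ambiguity-link-transgression} is applied. The remaining steps are then formal.
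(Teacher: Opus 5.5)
Your proposal is correct and follows essentially the same route as the paper. You identify $G_Z(c)$ with $\ker(\cup e(N))$ via Proposition~\ref{prop:ambiguity-link-transgression}, conclude it vanishes so the torsor is a singleton (the paper cites Theorem~\ref{thm:Loc-factor}, which restates Lemma~\ref{lem:LiftZ-torsor}), and identify the unique element by the self-intersection and Euler-division argument of Theorems~\ref{thm:self-int} and~\ref{thm:Loc-by-euler}; your remarks on exactness of coefficient localisation only make explicit what the paper leaves implicit in the phrase ``in the localised theory.''
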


\begin{proof}
Under the stated localisation or periodicity hypothesis, multiplication by \(e(N)\) has trivial kernel in the relevant degree.  By Proposition~\ref{prop:ambiguity-link-transgression}, this kernel is the secondary boundary group \(G_Z(c)\).  Hence \(G_Z(c)=0\), and Theorem~\ref{thm:Loc-factor} gives a unique supported refinement.  The identification of this refinement with the Euler-divided expression is the rigidification supplied by self-intersection and Euler division in Theorems~\ref{thm:self-int} and~\ref{thm:Loc-by-euler}.
\end{proof}

The novelty is not the Gysin sequence itself, but its role in the present formalism.  The localisation torsor identifies the image of the Gysin boundary with the secondary boundary group governing supported localisation.  Classical link transgression therefore describes the secondary stage left unresolved by the localisation triangle.  In de Rham-type models this boundary is represented by Thom-form transgression \cite{MathaiQuillen,BottTu}.  In the concentration situations considered below, the prescribed localisation of coefficients supplies the denominator and makes the relevant secondary transgression group vanish.

\begin{remark}\label{rem:quadratic-canonicity-obstruction}
In motivic theories endowed with an \(SL_{\eta}\)-orientation, the Euler class appearing above is replaced by the \(SL_{\eta}\)-oriented Euler class, with its natural determinant twist.  Thus, before passing to a coefficient localisation in which this Euler class becomes invertible, failure of injectivity of Euler multiplication gives a quadratic obstruction to the canonicity of supported localisation.  This observation is complementary to the rigidified quadratic localisation formulae of Hoyois, Levine and D'Angelo \cite{HoyoisTrace,LevineWitt,DAngeloSLeta}: those formulae supply local terms after the relevant geometric or coefficient-theoretic rigidification, whereas Proposition~\ref{prop:canonicity-before-euler-inversion} concerns the secondary torsor before such a rigidification has been imposed.
\end{remark}

\subsection{Secondary analogies in differential and physical localisation}\label{subsec:differential-physical-refinements}

The preceding discussion has a useful, but strictly structural, relation with secondary constructions in differential geometry and mathematical physics.  The condition \(j^*c=0\) is a primary vanishing statement; the torsor \(\mathrm{Lift}_Z^d(c)\) records the remaining choices of supported refinement; the group \(\operatorname{im}\delta\) governs their secondary indeterminacy.  Under the hypotheses of Proposition~\ref{prop:ambiguity-link-transgression}, this secondary indeterminacy is identified with link transgression.

This pattern is familiar from secondary characteristic theory.  Chern--Simons forms arise by transgression from Chern--Weil data \cite{ChernSimons}; Cheeger--Simons characters and differential cohomology retain cohomological and differential-geometric information \cite{CheegerSimons,HopkinsSinger}; and gerbe-theoretic refinements of string structures are naturally torsorial or higher torsorial \cite{BrylinskiMcLaughlinI,BrylinskiMcLaughlinII,WaldorfString}.  The analogy is not an identification.  The localisation torsor is neither a differential character nor a Chern--Simons invariant; it is the open--closed analogue of the same passage from a vanishing statement to secondary choices.

The relation with supersymmetric localisation should be read with the same care.  A one-loop determinant is not a point of \(\Loc_Z^{\mathrm{tor}}(c)\).  Rather, after the relevant supercharge, elliptic complex, chamber, polarisation or framing has been specified, an index or determinant calculation may select a preferred representative.  Chern--Simons framing dependence and Jeffrey--Kirwan chamber choices provide standard examples of torsorial or chamber-dependent choices which are subsequently rigidified \cite{AtiyahFraming,WittenJones,JeffreyKirwan,BeniniEagerHoriTachikawa}.  Thus the comparison is confined to the secondary exact-sequence structure; no equivalence with path integrals, gerbes or differential characters is claimed.

\begin{theorem}\label{thm:Loc-factor}
Assume \ref{def:recollement}. Let $i:Z\hookrightarrow X$ be a closed immersion with open complement $j:U\hookrightarrow X$, let $A\in\C(X)$, and let $c\in H^d(X;A)$ satisfy $j^*c=0$.

\begin{itemize}
\item The set $\mathrm{Lift}_Z^d(c)$ is a nonempty torsor under $\mathrm{im}(\delta)$, and $\Loc_Z^{\mathrm{tor}}(c)$ is the corresponding torsor after transport by the adjunction isomorphism
$H_Z^d(X;A)\cong \Hom_{\C(Z)}(\one_Z,i^!A[d])$.

\item If $\mathrm{im}(\delta)=0$ (equivalently, $\ker(\forg)=0$ in degree $d$), then the canonical localised class $\Loc_Z(c)$ exists. It is the unique morphism $\one_Z\to i^!A[d]$ whose adjoint $\widetilde c:\one_X\to i_*i^!A[d]$ satisfies
\begin{equation}\label{eq:support-factorization}
\begin{tikzcd}[row sep=large, column sep=large]
\one_X \arrow[r, "\widetilde c"] \arrow[dr, "c"'] &
i_*i^{!}A[d] \arrow[d, "\epsilon_{A[d]}"] \\
& A[d].
\end{tikzcd}
\end{equation}
\end{itemize}
\end{theorem}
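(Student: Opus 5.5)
The first bullet is essentially Lemma~\ref{lem:LiftZ-torsor}, so the plan is to cite it and then make the adjunction transport explicit. By Theorem~\ref{thm:les}, the segment
\[
H^{d-1}(U;j^*A)\xrightarrow{\delta}H^d_Z(X;A)\xrightarrow{\forg}H^d(X;A)\xrightarrow{j^*}H^d(U;j^*A)
\]
is exact. Since $j^*c=0$, the set $\mathrm{Lift}^d_Z(c)$ is nonempty and is an $\mathrm{im}(\delta)$-torsor.

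For the transport, write $\mathrm{adj}$ for the bijection
\[
\Hom_{\C(X)}(\one_X,i_*i^!A[d])\cong\Hom_{\C(Z)}(i^*\one_X,i^!A[d])\cong\Hom_{\C(Z)}(\one_Z,i^!A[d]),
\]
which uses $i^*\dashv i_*$ and the monoidal identification $i^*\one_X\simeq\one_Z$. I would note that $\mathrm{adj}$ is additive. The reason is that in a triangulated (hence additive) category, adjunction bijections between exact functors are group isomorphisms, and composing with a fixed isomorphism is additive. It follows that $\Loc^{\mathrm{tor}}_Z(c)=\mathrm{adj}(\mathrm{Lift}^d_Z(c))$ is a torsor under $\mathrm{adj}(\mathrm{im}\,\delta)$. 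One bookkeeping point: the definition of $H^d_Z$ involves the adjunction $i^*\dashv i_*$, not $i_*\dashv i^!$ as the statement phrases it. I would remark that the identification of $H^d_Z(X;A)$ with $\Hom(\one_Z,i^!A[d])$ comes from the former.

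For the second bullet, I would first record that exactness at $H^d_Z(X;A)$ gives $\ker(\forg)=\mathrm{im}(\delta)$, which justifies the parenthetical equivalence. If $\mathrm{im}(\delta)=0$, then a torsor under the trivial group is a singleton, so $\mathrm{Lift}^d_Z(c)=\{\widetilde c\}$ and Definition~\ref{def:LocZ}(3) produces $\Loc_Z(c)=\mathrm{adj}(\widetilde c)$.

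It remains to spell out the factorisation. By Definition~\ref{def:supports}, $\forg(\widetilde c)=\epsilon_{A[d]}\circ\widetilde c$, where $\epsilon$ is the counit of $i_*\dashv i^!$; the shift commutes with the counit because the functors are exact. Thus membership of $\widetilde c$ in $\mathrm{Lift}^d_Z(c)$ is exactly commutativity of \eqref{eq:support-factorization}. Uniqueness of $\Loc_Z(c)$ among morphisms $\one_Z\to i^!A[d]$ whose adjoint makes the triangle commute then follows because $\mathrm{adj}$ is bijective and the lift is unique.

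The only point needing any care is the additivity and shift-compatibility of the adjunction isomorphism, which lets the torsor structure transport. Everything else is a restatement of Lemma~\ref{lem:LiftZ-torsor}.
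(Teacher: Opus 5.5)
Your proposal is correct and follows essentially the same route as the paper. Both arguments cite Lemma~\ref{lem:LiftZ-torsor} for nonemptiness and the $\mathrm{im}(\delta)$-torsor structure, transport along the adjunction bijection, read $\forg(\widetilde c)=c$ as commutativity of the counit triangle, and get uniqueness from $\ker(\forg)=\mathrm{im}(\delta)=0$ together with bijectivity of adjunction. Your bookkeeping remark is also right: the identification $H^d_Z(X;A)\cong\Hom_{\C(Z)}(\one_Z,i^!A[d])$ comes from $i^*\dashv i_*$ and $i^*\one_X\simeq\one_Z$, as in the paper's own excision argument, not from $i_*\dashv i^!$ as the statement phrases it.
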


\begin{proof}
The first assertion is a direct reformulation of \ref{lem:LiftZ-torsor}. Indeed, since $j^*c=0$, exactness of the localisation sequence of \ref{thm:les} shows that $c$ lies in the image of the forget-support morphism
$\forg:H_Z^d(X;A)\to H^d(X;A)$.
Hence the fibre
$\mathrm{Lift}_Z^d(c)=\{\widetilde c\in H_Z^d(X;A)\mid \forg(\widetilde c)=c\}$
is nonempty. The same lemma identifies this fibre as a torsor under
$\ker(\forg)=\mathrm{im}(\delta)$.
Passing from supported refinements on $X$ to their adjoints on $Z$ via the adjunction
$i_*\dashv i^!$, or equivalently via the canonical isomorphism
$H_Z^d(X;A)\cong \Hom_{\C(Z)}(\one_Z,i^!A[d])$,
one obtains the corresponding torsor $\Loc_Z^{\mathrm{tor}}(c)$.
Assume now that $\mathrm{im}(\delta)=0$. By exactness of the segment
\begin{equation*} 
\begin{tikzcd}[column sep=large]
H^{d-1}(U;j^*A) \arrow[r, "\delta"] &
H_Z^d(X;A) \arrow[r, "\forg"] &
H^d(X;A)
\end{tikzcd}
\end{equation*}
this is equivalent to the vanishing of $\ker(\forg)$ in degree $d$. Consequently, the torsor
$\mathrm{Lift}_Z^d(c)$ consists of a single element. Let
$\widetilde c:\one_X\to i_*i^!A[d]$
denote this unique supported refinement of $c$, and define $\Loc_Z(c)$ to be its adjoint under
$i_*\dashv i^!$.
It remains only to verify the stated characterisation. 

By definition, the forget-support map is induced by the counit
$\epsilon:i_*i^!\Rightarrow \id$.
Accordingly, the identity $\forg(\widetilde c)=c$ is equivalent to the commutativity of the diagram
\ref{eq:support-factorization}. Thus $\Loc_Z(c)$ is represented by the unique morphism on $Z$ whose adjoint factors $c$ through $i_*i^!A[d]$ in the prescribed manner.
Finally, uniqueness is immediate. Since $\ker(\forg)=0$, there is only one supported refinement $\widetilde c$ of $c$; and since adjunction is bijective on morphisms, there is correspondingly only one morphism
$\one_Z\to i^!A[d]$
with the required property. This is precisely the canonical localised class $\Loc_Z(c)$.
\end{proof}

\subsection{Geometric interpretations of the localisation torsor}

The preceding constructions are abstract by design, yet the torsor
$\Loc_Z^{\mathrm{tor}}(c)$ admits a direct geometric interpretation in the localisation theories that motivate this paper.
It is best regarded as the \emph{prior-to-denominator form} of localisation: before any concentration theorem or invertibility statement is invoked, the open--closed formalism produces not a canonical class on $Z$, but a torsor of supported refinements.
The familiar Euler-denominator formulae arise precisely when this torsor collapses to a singleton after localisation of coefficients.

\smallskip
\noindent
\textbf{(1) Equivariant cohomology and Chow theory.}
Let a torus $T$ act on a smooth proper space $X$, and let $i:Z=X^{T}\hookrightarrow X$ be the fixed locus.
In equivariant cohomology and equivariant Chow theory, the localisation theorems of Atiyah--Bott, Berline--Vergne, and Edidin--Graham assert that, after localising the coefficient ring, the global class is recovered from its fixed-locus contribution by division by the equivariant Euler class of the normal bundle \cite{AB,BV,EG}.
In our language, the geometric content of the localisation theorem is precisely the additional uniqueness hypothesis which turns the torsor $\Loc_Z^{\mathrm{tor}}(c)$ into a canonical class $\Loc_Z(c)$ and identifies it with the usual Euler-divided expression.
Thus the formalism developed above isolates the categorical stage that precedes the classical fixed-point formula.

\smallskip
\noindent
\textbf{(2) Equivariant algebraic $K$-theory.}
For equivariant $K$-theory, the same pattern persists, but the denominator becomes multiplicative rather than cohomological.
Thomason's localisation theorem shows that, after localising the representation ring, the relevant correction factor is
$\lambda_{-1}(N^{\vee})$ rather than the ordinary Euler class \cite{Thomason}.
Accordingly, the object $\Loc_Z^{\mathrm{tor}}(c)$ should be regarded as the canonical supported term whose canonical representative is produced only after one imports the concentration/invertibility statement from equivariant $K$-theory.
This is precisely why the formal part of the argument may be separated from the geometric computation of the denominator.

\smallskip
\noindent
\textbf{(3) Virtual localisation and one-loop denominators.}
On a Deligne--Mumford stack carrying a torus action and a perfect obstruction theory, Graber--Pandharipande replace the ordinary normal bundle by the virtual normal bundle $N^{\mathrm{vir}}$ and obtain the virtual localisation formula \cite{GP}.
From the present viewpoint, this is again the same pattern: a supported local term is first forced formally by the vanishing on the open complement, and only then identified geometrically with the virtual Euler-divided contribution on the fixed locus.
From the viewpoint of supersymmetric localisation in quantum field theory, such Euler or virtual Euler denominators are the finite-dimensional shadows of one-loop determinants around the fixed, or more generally BPS, locus \cite{PestunZabzine}.
The role of $\Loc_Z^{\mathrm{tor}}(c)$ is therefore to isolate, in an intrinsic way, the formal categorical precursor of the algebro-geometric localisation formulae and of their physical one-loop interpretation.

\subsection{Functoriality axioms used by \texorpdfstring{$\Loc_Z$}{LocZ}}

\begin{definition} \label{def:Loc-func-axioms}
In the rest of this section we use the following compatibilities whenever stated:
\begin{itemize}
\item \textbf{(BC)} Beck--Chevalley isomorphisms for Cartesian squares involving a closed immersion $i$ and its pullback $i'$:
$$
g^*i_*\simeq i'_*g_Z^*,
\qquad
g_Z^*i^!\simeq i'^!g^*,
$$
and similarly with $j$-functors for open complements.
\item \textbf{(PF)} Projection formula for proper pushforwards and for closed immersions whenever needed.
\item \textbf{(Ext)} Existence and compatibility of the bifunctor $\boxtimes$.
\end{itemize}
\end{definition}

\subsection{Excision}

\begin{proposition} \label{prop:Loc-excision}
Assume \ref{def:recollement}. Let $v:V\hookrightarrow X$ be an open neighborhood of $Z$ and write
$i_V:Z\hookrightarrow V$ for the induced closed immersion and $j_V:V\setminus Z\hookrightarrow V$ for its open complement.
Assume (BC) for the square determined by $v$ and $i$ (so that $v^*i_*\simeq i_{V*}$ and $i_V^!v^*\simeq v_Z^*i^!$).
Let $A\in\C(X)$, fix $d\in\mathbb Z$, and let $c\in H^d(X;A)$ satisfy $j^*c=0$.
Then $j_V^*(v^*c)=0$ and, under the canonical identification
$$
\Hom_{\C(Z)}(\one_Z,i^!A[d])\ \simeq\ \Hom_{\C(Z)}(\one_Z,i_V^!v^*A[d]),
$$
one has an equality of torsors
$$
\Loc_Z^{\mathrm{tor},X}(c)\ =\ \Loc_Z^{\mathrm{tor},V}(v^*c).
$$
In particular, if either side is a singleton, then so is the other and $\Loc_Z^{X}(c)=\Loc_Z^{V}(v^*c)$.
\end{proposition}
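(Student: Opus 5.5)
The plan is to compare the two localisation triangles, for \((X,Z,U)\) and for \((V,Z,V\setminus Z)\), via the restriction functor \(v^*\), and to show that restriction induces a morphism of long exact sequences whose supported terms are identified compatibly with forget-support. Once this is in place, the torsor statement follows by transport.

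First, \(j_V^*(v^*c)=0\). Write \(v_U:V\setminus Z\hookrightarrow U\) for the induced open immersion, so that \(v\circ j_V=j\circ v_U\). Then \(j_V^*v^*c=v_U^*j^*c=0\), by pseudo-functoriality of \(*\)-pullback. Hence \(\mathrm{Lift}_Z^d(v^*c)\) is defined and nonempty by Lemma~\ref{lem:LiftZ-torsor}.

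Next comes the key comparison. Applying \(v^*\) to the counit \(\epsilon:i_*i^!A\to A\) and using (BC) gives a composite
\[
v^*i_*i^!A\simeq i_{V*}v_Z^*i^!A\simeq i_{V*}i_V^!v^*A\xrightarrow{\epsilon_V}v^*A .
\]
I must check that this composite agrees with \(v^*\epsilon\). This is the main obstacle: the Beck--Chevalley isomorphisms must be the canonical mates, so that they intertwine the counits of \(i_*\dashv i^!\) and \(i_{V*}\dashv i_V^!\). I would verify this with the standard mate calculus, taking the exchange \(v_Z^*i^!\simeq i_V^!v^*\) to be adjoint to \(v^*\epsilon\) under \(i_{V*}\dashv i_V^!\); this is the usual construction of that transformation. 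Here \(v_Z=\id_Z\), since \(Z\subset V\). With this compatibility in hand, the map \(\rho:H_Z^d(X;A)\to H_Z^d(V;v^*A)\), \(\widetilde c\mapsto v^*\widetilde c\) transported along these isomorphisms (using \(v^*\one_X\simeq\one_V\)), satisfies \(\forg_V\circ\rho=v^*\circ\forg_X\). Consequently \(\rho\) maps \(\mathrm{Lift}_Z^d(c)\) into \(\mathrm{Lift}_Z^d(v^*c)\).

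After passing to adjoints on \(Z\), I expect \(\rho\) to become the identity under the stated identification \(\Hom(\one_Z,i^!A[d])\simeq\Hom(\one_Z,i_V^!v^*A[d])\). Indeed, the adjoint of \(v^*\widetilde c\) under \(i_{V*}\dashv i_V^!\) is obtained by applying \(i_V^*=i^*\) (again by \(v\circ i_V=i\)) and then the unit/counit, and the mate compatibility above reduces this to the adjoint of \(\widetilde c\) under \(i_*\dashv i^!\). In particular \(\rho\) is a bijection \(H_Z^d(X;A)\cong H_Z^d(V;v^*A)\); this is excision for supported cohomology.

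Finally, a bijection of the ambient groups that commutes with forget-support up to the map \(v^*\) need not a priori send fibres onto fibres, so I would argue torsorially. The bijection \(\rho\) is a group isomorphism that maps \(\mathrm{Lift}_Z^d(c)\) into \(\mathrm{Lift}_Z^d(v^*c)\). It also maps \(\ker\forg_X\) into \(\ker\forg_V\), and by symmetry, using \(\forg_X=\forg_V\circ\rho\) up to \(v^*\), I want equality. For this I compare the boundary groups: by the functoriality of the triangles under \(v^*\) (a morphism of long exact sequences whose supported terms are isomorphisms), a five-lemma-style diagram chase on kernels shows \(\rho(\ker\forg_X)=\ker\forg_V\). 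Hence \(\rho\) restricts to a bijection of torsors, compatible with the group isomorphism \(\operatorname{im}\delta_X\cong\operatorname{im}\delta_V\). The final sentence of the statement, about singletons and the equality \(\Loc_Z^{X}(c)=\Loc_Z^{V}(v^*c)\), is then immediate from this equality of sets.
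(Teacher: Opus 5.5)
Your set-up is sound, and on counit compatibility it is more careful than the paper's: the mate argument gives an isomorphism $\rho\colon H^d_Z(X;A)\to H^d_Z(V;v^*A)$ with $\forg_V\circ\rho=v^*\circ\forg_X$, hence $\rho(\mathrm{Lift}^d_Z(c))\subseteq\mathrm{Lift}^d_Z(v^*c)$.

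The gap is the final step, the ``five-lemma-style'' chase claiming $\rho(\ker\forg_X)=\ker\forg_V$. In a morphism of long exact sequences where only the supported terms are isomorphisms, the relation $\rho\circ\delta_X=\delta_V\circ v_U^*$ yields only $\rho(\operatorname{im}\delta_X)=\delta_V(\operatorname{im}v_U^*)\subseteq\operatorname{im}\delta_V$. There is no symmetry, because $v^*$ is not invertible. Precisely, $\rho^{-1}(\ker\forg_V)=\ker(v^*\circ\forg_X)$, and $\forg_X$ induces an isomorphism $\rho^{-1}(\ker\forg_V)/\ker\forg_X\cong\ker j^*\cap\ker v^*\subseteq H^d(X;A)$. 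In sheaf-theoretic models this group is the image of the Mayer--Vietoris boundary $H^{d-1}(U\cap V)\to H^d(X)$, which need not vanish. So the two torsors coincide if and only if $\ker j^*\cap\ker v^*=0$ in degree $d$.

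The step cannot be repaired, because the statement is false as written. The paper's proof has the same gap: it asserts that lifts correspond bijectively, citing only counit compatibility.

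Here is a counterexample. Take $X=S^1$ with constant coefficients $k$, $Z=\{p\}$, $V$ an open arc around $p$, $d=1$, and $c$ a generator of $H^1(S^1;k)$.
\begin{itemize}
\item On $X$: $j^*c=0$ since $U$ is an arc. Also $\delta_X=0$ because $H^0(S^1)\to H^0(U)$ is onto. Hence $\mathrm{Lift}^1_Z(c)$ is a singleton.
\item On $V$: $v^*c=0$ and $H^1(V)=0$. The map $\delta_V\colon H^0(V\setminus\{p\})\cong k^2\to H^1_{\{p\}}(V;k)\cong k$ is onto. Hence $\mathrm{Lift}^1_Z(v^*c)=H^1_{\{p\}}(V;k)\cong k$, which is not a singleton.
\end{itemize}

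What your argument does establish is weaker. You get an injective map of torsors $\Loc_Z^{\mathrm{tor},X}(c)\hookrightarrow\Loc_Z^{\mathrm{tor},V}(v^*c)$. It is equivariant along $\operatorname{im}\delta_X\hookrightarrow\operatorname{im}\delta_V$, and it is bijective exactly when $\ker j^*\cap\ker v^*=0$ on $H^d(X;A)$. In particular, if the $V$-side torsor is a singleton, then so is the $X$-side one, and $\Loc_Z^{X}(c)=\Loc_Z^{V}(v^*c)$. The converse fails, as the example shows.
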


\begin{proof}
The vanishing is immediate from functoriality:
$$
j_V^*(v^*c)= (j^*c)|_{V\setminus Z}=0.
$$

For cohomology with supports, adjunction identifies
\begin{align}
H_Z^d(X;A)
&=\Hom_{\C(X)}(\one_X,i_*i^!A[d])
\cong \Hom_{\C(Z)}(i^*\one_X,i^!A[d])
\cong \Hom_{\C(Z)}(\one_Z,i^!A[d]), \label{eq:excision-adj-X}\\
H_Z^d(V;v^*A)
&=\Hom_{\C(V)}(\one_V,i_{V*}i_V^!v^*A[d])
\cong \Hom_{\C(Z)}(i_V^*\one_V,i_V^!v^*A[d])
\cong \Hom_{\C(Z)}(\one_Z,i_V^!v^*A[d]). \label{eq:excision-adj-V}
\end{align}
By Beck--Chevalley for the square determined by $v$ and $i$, one has a canonical isomorphism
\begin{equation}\label{eq:excision-BC-identification}
i_V^!v^*A\ \simeq\ i^!A.
\end{equation}
Combining \eqref{eq:excision-adj-X}, \eqref{eq:excision-adj-V}, and \eqref{eq:excision-BC-identification} yields a canonical isomorphism
\begin{equation}\label{eq:excision-supports-iso}
H_Z^d(X;A)\xrightarrow{\ \sim\ } H_Z^d(V;v^*A).
\end{equation}
Since the forget-support maps are induced by the counits $i_*i^!\Rightarrow \id$ and $i_{V*}i_V^!\Rightarrow \id$, and Beck--Chevalley is compatible with these counits, the isomorphism \eqref{eq:excision-supports-iso} carries supported refinements of $c$ bijectively to supported refinements of $v^*c$. Thus
$$
\mathrm{Lift}_Z^d(c)\xrightarrow{\ \sim\ }\mathrm{Lift}_Z^d(v^*c),
$$
and after adjunction this identifies the localisation torsors
$$
\Loc_Z^{\mathrm{tor},X}(c)
\ =\
\Loc_Z^{\mathrm{tor},V}(v^*c)
$$
under the stated identification of targets.
Finally, if either torsor is a singleton, the equality of torsors forces equality of the unique element, giving
$$
\Loc_Z^{X}(c)=\Loc_Z^{V}(v^*c).
$$
\end{proof}

\subsection{Cartesian base change}

\begin{proposition} \label{prop:Loc-bc}
Assume \ref{def:recollement}. Consider a Cartesian square
\begin{equation}\label{eq:bc-square-closed}
\begin{tikzcd}[row sep=large, column sep=large]
Z' \arrow[r,"g_Z"] \arrow[d,"i'"'] & Z \arrow[d,"i"]\\
X' \arrow[r,"g"'] & X
\end{tikzcd}
\end{equation}
and write $j:U\hookrightarrow X$, $j':U'\hookrightarrow X'$ for the open complements.
Let $A\in\C(X)$ and $d\in\mathbb Z$.
Assume Beck--Chevalley holds for the closed square \eqref{eq:bc-square-closed} and for the induced open square
\begin{equation*}
\begin{tikzcd}[row sep=large, column sep=large]
U' \arrow[r,"g_U"] \arrow[d,"j'"'] & U \arrow[d,"j"]\\
X' \arrow[r,"g"'] & X .
\end{tikzcd}
\end{equation*}
so that we have canonical isomorphisms
$$
g^*i_*\simeq i'_*g_Z^*,\qquad g_Z^*i^!\simeq i'^!g^*,
\qquad\text{and}\qquad
j'^*g^*\simeq g_U^*j^*.
$$
Let $c\in H^d(X;A)$ satisfy $j^*(c)=0$. Then $j'^*(g^*c)=0$ and, after identifying
$g_Z^*i^!A\simeq i'^!g^*A$ by base change, pullback induces a natural morphism of torsors
$$
g_Z^*:\Loc_Z^{\mathrm{tor}}(c)\longrightarrow \Loc_{Z'}^{\mathrm{tor}}(g^*c)
\qquad\text{inside}\qquad
\Hom_{\C(Z')}(\one_{Z'},\,i'^!g^*A[d]).
$$
In particular, if $\Loc_Z(c)$ and $\Loc_{Z'}(g^*c)$ are defined (i.e.\ the corresponding torsors are singletons), then
$$
\Loc_{Z'}(g^*c)\ =\ g_Z^*\Loc_Z(c)
\qquad\text{in}\qquad
\Hom_{\C(Z')}(\one_{Z'},\,i'^!g^*A[d]).
$$
\end{proposition}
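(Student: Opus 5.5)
The plan is to build a pullback map on supported cohomology, $g^*:H_Z^d(X;A)\to H_{Z'}^d(X';g^*A)$, and show that it commutes with both forget-support maps and boundary maps. The torsor statement then follows formally from Lemma~\ref{lem:LiftZ-torsor}.

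\emph{Vanishing on the open part.} First I would check that $j'^*(g^*c)=0$. Using the open Beck--Chevalley isomorphism $j'^*g^*\simeq g_U^*j^*$ together with $g^*\one_X\simeq\one_{X'}$, we get $j'^*g^*c=g_U^*(j^*c)=0$.

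\emph{The pullback on supported classes.} Next, given $\widetilde c:\one_X\to i_*i^!A[d]$, I apply $g^*$ and compose:
\[
\one_{X'}\simeq g^*\one_X\xrightarrow{g^*\widetilde c} g^*i_*i^!A[d]\simeq i'_*g_Z^*i^!A[d]\simeq i'_*i'^!g^*A[d].
\]
This uses both closed Beck--Chevalley isomorphisms. On the $Z$-side, the same map reads: take the adjoint $a:\one_Z\to i^!A[d]$ and send it to $g_Z^*a$, composed with $g_Z^*i^!\simeq i'^!g^*$. Here one must check that adjunction under $i_*\dashv i^!$ intertwines the two descriptions. This amounts to compatibility of the base-change isomorphism with units and counits, which I take as part of hypothesis (BC).

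\emph{Main obstacle: compatibility with the counit.} The crucial step is the commutativity
\[
\epsilon'_{g^*A}\circ(\text{BC})\circ g^*\epsilon_A=\text{the identity of }g^*A \text{ under the evident identifications},
\]
that is, $\forg'\circ g^*=g^*\circ\forg$. If the closed base-change isomorphisms are the canonical mates, this is a standard mate-compatibility diagram chase. I would verify it by writing $g_Z^*i^!\to i'^!g^*$ as the mate of $i'_*g_Z^*\simeq g^*i_*$ and unwinding the triangle identities. Granting this, $g^*$ sends $\mathrm{Lift}_Z^d(c)$ into $\mathrm{Lift}_{Z'}^d(g^*c)$.

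\emph{Compatibility with boundaries and the torsor morphism.} The same chase applied to the unit $A\to j_*j^*A$, using the open square, shows that $g^*$ maps the localisation triangle of $A$ to that of $g^*A$ as a morphism of triangles. If needed, the third component is obtained from the two squares already shown to commute, via the triangulated axiom. It follows that $g^*\circ\delta=\delta'\circ g_U^*$, so $g^*$ carries $\mathrm{im}(\delta)$ into $\mathrm{im}(\delta')$. Moreover $g^*$ is additive, so $g^*(\widetilde c+\delta\beta)=g^*\widetilde c+\delta'(g_U^*\beta)$. Hence $g_Z^*$ is equivariant along the group homomorphism $\mathrm{im}\delta\to\mathrm{im}\delta'$, which makes it a morphism of torsors.

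\emph{Singleton case.} When both torsors are singletons, $g_Z^*\Loc_Z(c)$ is an element of the one-point set $\Loc_{Z'}^{\mathrm{tor}}(g^*c)$. It therefore equals $\Loc_{Z'}(g^*c)$.
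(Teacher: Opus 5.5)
Your proposal is correct and follows essentially the same route as the paper: you transport $g^*\widetilde c$ through the closed Beck--Chevalley isomorphisms, use compatibility of base change with the counits to get $\forg'\circ g^*=g^*\circ\forg$, and pass to adjoints. You also check equivariance, which the paper leaves implicit; that follows more directly from the same identity, since $\operatorname{im}\delta=\ker\forg$ and $\operatorname{im}\delta'=\ker\forg'$, so the morphism-of-triangles step is unnecessary (and TR3 alone only supplies some third vertical map, not one identified with $g_U^*$, so it would not by itself justify $g^*\circ\delta=\delta'\circ g_U^*$).
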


\begin{proof}
The vanishing is immediate from Beck--Chevalley for the open square:
$$
j'^*(g^*c)\ \simeq\ g_U^*(j^*c)\ =\ 0.
$$
Let $\widetilde c\in \mathrm{Lift}_Z^d(c)$, so that
$$
\widetilde c:\one_X\longrightarrow i_*i^!A[d]
\qquad\text{and}\qquad
\epsilon_{A[d]}\circ \widetilde c=c.
$$
Applying $g^*$ and using $g^*\one_X\simeq \one_{X'}$, together with Beck--Chevalley for the closed square, gives a morphism
$$
g^*\widetilde c:\one_{X'}\longrightarrow g^*(i_*i^!A)[d]\simeq i'_*i'^!g^*A[d].
$$
Naturality of Beck--Chevalley with respect to counits implies that under these identifications the pullback of
$\epsilon_{A[d]}:i_*i^!A[d]\to A[d]$
corresponds to the counit
$\epsilon_{g^*A[d]}:i'_*i'^!g^*A[d]\to g^*A[d]$.
Hence
$$
\epsilon_{g^*A[d]}\circ g^*\widetilde c
\ \simeq\
g^*(\epsilon_{A[d]})\circ g^*(\widetilde c)
\ =\
g^*(\epsilon_{A[d]}\circ \widetilde c)
\ =\
g^*c,
$$
so $g^*\widetilde c\in \mathrm{Lift}_{Z'}^d(g^*c)$. This construction is functorial in $\widetilde c$, hence yields a natural map
$$
g^*: \mathrm{Lift}_Z^d(c)\longrightarrow \mathrm{Lift}_{Z'}^d(g^*c).
$$
Passing to adjoints under $i_*\dashv i^!$ and $i'_*\dashv i'^!$ gives the announced morphism of localisation torsors
$$
g_Z^*:\Loc_Z^{\mathrm{tor}}(c)\longrightarrow \Loc_{Z'}^{\mathrm{tor}}(g^*c).
$$
If both torsors are singletons, then the image of the unique element of $\Loc_Z^{\mathrm{tor}}(c)$ must be the unique element of $\Loc_{Z'}^{\mathrm{tor}}(g^*c)$, which is precisely the stated compatibility of canonical localised classes.
\end{proof}

\subsection{Proper pushforward}

\begin{proposition} \label{prop:Loc-proper}
Assume \ref{def:recollement}. Let $p:X\to Y$ be a proper morphism.
Let $k:W\hookrightarrow Y$ be a closed immersion with open complement $\ell:V:=Y\setminus W\hookrightarrow Y$.
Let $i:Z\hookrightarrow X$ be a closed immersion with open complement $j:U:=X\setminus Z\hookrightarrow X$.
Assume $p(Z)\subset W$, and write $p_Z:Z\to W$ for the induced proper morphism, so that the square
\begin{equation}\label{eq:Loc-proper-square}
\begin{tikzcd}[row sep=large, column sep=large]
Z \arrow[r,"i"] \arrow[d,"p_Z"'] & X \arrow[d,"p"] \\
W \arrow[r,"k"'] & Y
\end{tikzcd}
\end{equation}
commutes and is Cartesian. Let $\tilde \ell:p^{-1}(V)\hookrightarrow X$ and $\tilde p:p^{-1}(V)\to V$ be the induced maps.
Assume Beck--Chevalley holds for the open square
\begin{equation*}
\begin{tikzcd}[row sep=large, column sep=large]
p^{-1}(V) \arrow[r,"\tilde p"] \arrow[d,"\tilde \ell"'] & V \arrow[d,"\ell"]\\
X \arrow[r,"p"'] & Y
\end{tikzcd}
\end{equation*}
in the form $\ell^*p_*\simeq \tilde p_*\,\tilde \ell^*$, and assume Beck--Chevalley holds for the closed square
\eqref{eq:Loc-proper-square} in the two forms
$$
p_*\,i_*\ \simeq\ k_*\, (p_Z)_*,
\qquad\text{and}\qquad
k^!\,p_*\ \simeq\ (p_Z)_*\,i^!.
$$
Let $A\in\C(Y)$ and $d\in\mathbb Z$.
Define the pushforward on cohomology (for coefficients pulled back from $Y$) by
\begin{equation}\label{eq:def-pushforward-class-star}
p_*:\ H^d(X;p^*A)=\Hom_{\C(X)}(\one_X,p^*A[d])\longrightarrow H^d(Y;A)=\Hom_{\C(Y)}(\one_Y,A[d])
\end{equation}
as follows: for $c:\one_X\to p^*A[d]$ set
$$
p_*c:\one_Y \xrightarrow{\eta} p_*\one_X \xrightarrow{p_*(c)} p_*p^*A[d] \xrightarrow{\epsilon} A[d],
$$
where $\eta:\one_Y\to p_*\one_X$ and $\epsilon:p_*p^*\to \id$ are the unit and counit of $p^*\dashv p_*$.
Now, let $c\in H^d(X;p^*A)$ satisfy $j^*(c)=0$. Then $\ell^*(p_*c)=0$, so $\Loc_W^{\mathrm{tor}}(p_*c)$ is defined.
Moreover, under the Beck--Chevalley identification $k^!A\simeq (p_Z)_*\,i^!p^*A$ (obtained from $k^!p_*\simeq (p_Z)_*i^!$ and the counit $\epsilon$), proper pushforward induces a natural morphism of torsors
\begin{equation}\label{eq:Loc-proper-map}
(p_Z)_*:\Loc_Z^{\mathrm{tor}}(c)\longrightarrow\Loc_W^{\mathrm{tor}}(p_*c).
\end{equation}
No assertion of surjectivity is made without further hypotheses on the induced map of secondary boundary groups. In particular, if both torsors are singletons, then
$$
\Loc_W(p_*c)\ =\ (p_Z)_*\Loc_Z(c)
\qquad\text{in}\qquad
\Hom_{\C(W)}(\one_W,\,k^!A[d]).
$$
\end{proposition}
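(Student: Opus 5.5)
The plan parallels the proof of Proposition~\ref{prop:Loc-bc}: first the vanishing, then transport of supported refinements, then adjunction.

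\textbf{Vanishing.} We must show $\ell^*(p_*c)=0$. Expanding the definition \eqref{eq:def-pushforward-class-star}, $\ell^*(p_*c)$ is the composite $\ell^*\eta$, then $\ell^*p_*(c)$, then $\ell^*\epsilon$. By open Beck--Chevalley $\ell^*p_*\simeq\tilde p_*\tilde\ell^*$, the middle arrow becomes $\tilde p_*(\tilde\ell^*c)$. Since $p(Z)\subset W$, we have $p^{-1}(V)\subset U$. Hence $\tilde\ell^*c$ is a further restriction of $j^*c=0$ and vanishes, so the whole composite is zero.

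\textbf{Supported refinements.} Take $\widetilde c:\one_X\to i_*i^!p^*A[d]$ with $\epsilon\circ\widetilde c=c$. Define $p_*\widetilde c$ as the composite
\[
\one_Y\xrightarrow{\eta}p_*\one_X\xrightarrow{p_*\widetilde c}p_*i_*i^!p^*A[d]\simeq k_*(p_Z)_*i^!p^*A[d]\simeq k_*k^!p_*p^*A[d]\xrightarrow{k_*k^!\epsilon}k_*k^!A[d].
\]
The first isomorphism uses $p_*i_*\simeq k_*(p_Z)_*$, and the second uses $k^!p_*\simeq (p_Z)_*i^!$. Thus $p_*\widetilde c$ lies in $H^d_W(Y;A)$.

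To show it refines $p_*c$, compose with the counit $k_*k^!A\to A$. Naturality of the counit $k_*k^!\Rightarrow\id$ applied to $\epsilon:p_*p^*A\to A$ moves $\epsilon$ past it. What remains is to check that the Beck--Chevalley isomorphisms intertwine the counit $k_*k^!p_*\to p_*$ with $p_*$ of the counit $i_*i^!\to\id$. Granting this, the composite becomes $\epsilon\circ p_*(\epsilon_{i}\circ\widetilde c)\circ\eta=p_*c$.

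\textbf{Torsor morphism.} Next check compatibility with translations. The assignment $\widetilde c\mapsto p_*\widetilde c$ is additive, since composition in an additive category is bilinear. It should send $\delta(\beta)$ for $\beta\in H^{d-1}(U;j^*p^*A)$ to $\delta(\beta')$, where $\beta'$ is the pushforward of $\beta|_{p^{-1}(V)}$ along $\tilde p$. This follows from functoriality of the localisation triangle under $p_*$: apply $p_*$ to $i_*i^!p^*A\to p^*A\to j_*j^*p^*A$ and map into the triangle for $A$ on $Y$. This gives a homomorphism of secondary boundary groups and hence a map of torsors equivariant along it.

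\textbf{Adjunction and conclusion.} Passing to adjoints under $i_*\dashv i^!$ and $k_*\dashv k^!$ gives \eqref{eq:Loc-proper-map}, written in terms of $(p_Z)_*$ together with the identification $k^!A\simeq(p_Z)_*i^!p^*A$. If both torsors are singletons, the image of the unique element must be the unique element, which gives the final equality.

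\textbf{Main obstacle.} I expect the hardest step to be the coherence claim in the refinement step: that the exchange isomorphism $k^!p_*\simeq(p_Z)_*i^!$ is compatible with the counits $i_*i^!\to\id$ and $k_*k^!\to\id$, including across $p_*i_*\simeq k_*(p_Z)_*$. I would construct $k^!p_*\to(p_Z)_*i^!$ as the mate of $k_*(p_Z)_*i^!\simeq p_*i_*i^!\to p_*$. Then the compatibility holds by the triangle identities for mates, and the hypothesis only adds that this map is invertible. I would state this choice explicitly, since the proposition only assumes \emph{some} such isomorphism.
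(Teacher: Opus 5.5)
Your proposal is correct and follows essentially the same route as the paper. The shared steps are: the vanishing via $p^{-1}(V)\subset U$ and open Beck--Chevalley; the same composite $\one_Y\to k_*k^!A[d]$ built from $\eta$, $p_*\widetilde c$, the two exchange isomorphisms and $k_*k^!\epsilon$; transport across the adjunctions; and the singleton case at the end.

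Your explicit handling of the counit compatibility via mates, and of equivariance along $\operatorname{im}\delta$, fills in what the paper compresses into ``formal pasting'' and the word ``affine''. One small slip: the mate of $k_*(p_Z)_*i^!\simeq p_*i_*i^!\to p_*$ is a map $(p_Z)_*i^!\to k^!p_*$, not $k^!p_*\to(p_Z)_*i^!$; the hypothesis then lets you invert it, and your composite already uses the correct direction.
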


\begin{proof}
\textbf{Step 1:  }
Since $p(Z)\subset W$, we have $p^{-1}(V)\subset U$, hence $\tilde \ell$ factors through $j:U\hookrightarrow X$ and
$\tilde \ell^*(c)=0$. Applying $\ell^*$ to the definition of $p_*c$ and using Beck--Chevalley for the open square,
$\ell^*p_*\simeq \tilde p_*\,\tilde \ell^*$, we obtain
$$
\ell^*(p_*c)
\ =\
\Big(\one_V \xrightarrow{\eta} \tilde p_*\one_{p^{-1}(V)} \xrightarrow{\tilde p_*(\tilde \ell^*c)} \tilde p_*\tilde p^*\ell^*A[d]
\xrightarrow{\epsilon} \ell^*A[d]\Big)
\ =\ 0.
$$
Thus $\ell^*(p_*c)=0$, so the localisation torsor $\Loc_W^{\mathrm{tor}}(p_*c)$ is defined.
\\
\\
\textbf{Step 2:  }
Let $\widetilde c\in \mathrm{Lift}_Z^d(c)$ be a supported refinement of $c$ along $Z$, i.e.\ a morphism
$$
\widetilde c:\one_X\longrightarrow i_*i^!p^*A[d]
\qquad\text{with}\qquad
\epsilon_{p^*A[d]}\circ \widetilde c=c.
$$
Apply $p_*$ and precompose with $\eta:\one_Y\to p_*\one_X$ to obtain
$$
\one_Y \xrightarrow{\eta} p_*\one_X \xrightarrow{p_*(\widetilde c)} p_*i_*i^!p^*A[d].
$$
Using Beck--Chevalley for the closed square in the form $p_*i_*\simeq k_*(p_Z)_*$, we identify the target as
$k_*(p_Z)_*\,i^!p^*A[d]$.

Next, use Beck--Chevalley in the form $k^!p_*\simeq (p_Z)_*i^!$ applied to $p^*A$ and postcompose with $k^!(\epsilon)$, where
$\epsilon:p_*p^*\to\id$ is the counit of $p^*\dashv p_*$. This yields a canonical morphism
$$
(p_Z)_*\,i^!p^*A \ \simeq\ k^!p_*p^*A \xrightarrow{k^!(\epsilon)} k^!A.
$$
Applying $k_*$ gives a canonical morphism
$$
k_*(p_Z)_*\,i^!p^*A[d]\longrightarrow k_*k^!A[d].
$$
Composing, we obtain a morphism
\begin{equation}\label{eq:constructed-refinement-on-Y}
\widetilde{(p_*c)}_{\widetilde c}:\ \one_Y\longrightarrow k_*k^!A[d].
\end{equation}

We claim that \eqref{eq:constructed-refinement-on-Y} is a supported refinement of $p_*c$ along $W$, i.e.\ that its composite with the
counit $\epsilon_{A[d]}:k_*k^!A[d]\to A[d]$ equals $p_*c$.
This is a formal pasting statement: it follows from
(i) functoriality of $p_*$ applied to the commutative triangle $\epsilon_{p^*A[d]}\circ\widetilde c=c$,
(ii) naturality of the Beck--Chevalley transformations for the closed square, and
(iii) the triangle identities for the adjunctions $k_*\dashv k^!$ and $p^*\dashv p_*$.
More explicitly, after transporting all objects through the Beck--Chevalley identifications, the composite
$\epsilon_{A[d]}\circ \widetilde{(p_*c)}_{\widetilde c}$ becomes precisely
\begin{equation*}
\begin{tikzcd}[column sep=large]
\one_Y \arrow[r, "\eta"] &
p_*\one_X \arrow[r, "p_*(c)"] &
p_*p^*A[d] \arrow[r, "\epsilon"] &
A[d]
\end{tikzcd}
\end{equation*}
which is the definition of $p_*c$.
Therefore $\widetilde{(p_*c)}_{\widetilde c}\in \mathrm{Lift}_W^d(p_*c)$.
\\
\\
\textbf{Step 3:  }
Adjunction $k_*\dashv k^!$ sends $\mathrm{Lift}_W^d(p_*c)$ bijectively to $\Loc_W^{\mathrm{tor}}(p_*c)$.
Taking adjoints of \eqref{eq:constructed-refinement-on-Y}, we obtain an element
$$
\Loc_W(p_*c)_{\widetilde c}\in \Hom_{\C(W)}(\one_W,k^!A[d]).
$$
By construction, $\Loc_W(p_*c)_{\widetilde c}$ depends only on the adjoint $\Loc_Z(c)_{\widetilde c}\in \Hom_{\C(Z)}(\one_Z,i^!p^*A[d])$
of $\widetilde c$, and it is obtained from $\Loc_Z(c)_{\widetilde c}$ by the standard pushforward recipe:
\begin{equation*}
\begin{tikzcd}[column sep=large]
\one_W
\arrow[r, "\eta"] &
(p_Z)_*\one_Z
\arrow[r, "{(p_Z)_*(\Loc_Z(c)_{\widetilde c})}"] &
(p_Z)_*i^!p^*A[d]
\arrow[r, "\sim"] &
k^!p_*p^*A[d]
\arrow[r, "k^!(\epsilon)"] &
k^!A[d]
\end{tikzcd}
\end{equation*}
This defines an affine map $(p_Z)_*$ from $\Hom_{\C(Z)}(\one_Z,i^!p^*A[d])$ to $\Hom_{\C(W)}(\one_W,k^!A[d])$. The preceding steps show that, as $\widetilde c$ ranges through $\mathrm{Lift}_Z^d(c)$ (equivalently $\Loc_Z^{\mathrm{tor}}(c)$), the resulting elements $\Loc_W(p_*c)_{\widetilde c}$ are supported refinements of $p_*c$ along $W$. Thus one obtains the natural morphism of torsors \eqref{eq:Loc-proper-map}. The argument does not imply that this morphism is surjective; such a statement would require an additional hypothesis on the induced map of secondary boundary groups.

If the torsors are singletons, this specializes to the stated identity of canonical localised classes.
\end{proof}
\subsection{Compatibility with \texorpdfstring{$\boxtimes$}{⊠}}

\begin{definition}\label{def:ext-prod-obj}
For $X,Y$ assume we are given a bifunctor
\begin{equation*}
\begin{tikzcd}[column sep=large]
\C(X)\times \C(Y) \arrow[r, "\boxtimes"] &
\C(X\times Y)
\end{tikzcd}
\end{equation*}
which is bi-exact (triangulated models) and compatible with pullbacks and pushforwards in the usual six-functor sense
(whenever those compatibilities are invoked later).
\end{definition}

\begin{definition}\label{def:ext-prod-coh}
For $A\in \C(X)$, $B\in \C(Y)$ and classes
$\alpha\in H^p(X;A)=\Hom_{\C(X)}(\one_X,A[p])$,
$\beta\in H^q(Y;B)=\Hom_{\C(Y)}(\one_Y,B[q])$,
define
$$
\alpha\boxtimes\beta\in H^{p+q}(X\times Y;A\boxtimes B)
$$
as the composite
\begin{equation*}
\begin{tikzcd}[column sep=large]
\one_{X\times Y}
\arrow[r, "\sim"] &
\one_X\boxtimes\one_Y
\arrow[r, "\alpha\boxtimes\beta"] &
A[p]\boxtimes B[q]
\arrow[r, "\sim"] &
(A\boxtimes B)[p+q]
\end{tikzcd}
\end{equation*}
\end{definition}

\begin{proposition} \label{prop:Loc-box}
Let $i:Z\hookrightarrow X$ be closed with open complement $j:U\hookrightarrow X$, and let $Y$ be any space.
Write $I:Z\times Y\hookrightarrow X\times Y$ for the product immersion and $J:U\times Y\hookrightarrow X\times Y$
for its open complement. Let $A\in \C(X)$, $B\in \C(Y)$, let $c\in H^d(X;A)$ satisfy $j^*c=0$, and let
$\beta\in H^e(Y;B)$.

Assume that the external product is compatible with closed pushforward and extraordinary pullback for $I$, in the sense that there are functorial isomorphisms
\begin{equation*}
(i_*C)\boxtimes B\simeq I_*(C\boxtimes B),
\qquad
I^!(A\boxtimes B)\simeq i^!A\boxtimes B,
\end{equation*}
compatible with the corresponding counits. Then $J^*(c\boxtimes\beta)=0$, and external product with $\beta$ induces a natural morphism of torsors
\begin{equation}\label{eq:Loc-box-map}
\Loc_Z^{\mathrm{tor}}(c)
\longrightarrow
\Loc_{Z\times Y}^{\mathrm{tor}}(c\boxtimes\beta),
\qquad
\lambda\longmapsto \lambda\boxtimes\beta.
\end{equation}
No assertion of surjectivity is made without further K\"unneth-type hypotheses on the secondary boundary groups.  In particular, whenever the two torsors are singletons, one obtains
\begin{equation*}
\Loc_{Z\times Y}(c\boxtimes \beta)=\Loc_Z(c)\boxtimes \beta.
\end{equation*}
\end{proposition}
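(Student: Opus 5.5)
The proof runs parallel to Propositions~\ref{prop:Loc-bc} and~\ref{prop:Loc-proper}: we build the map on supported refinements and then transport it along adjunction. The first step is the vanishing $J^*(c\boxtimes\beta)=0$. Since $J=j\times\id_Y$, compatibility of $\boxtimes$ with pullbacks gives $J^*(c\boxtimes\beta)\simeq j^*c\boxtimes\beta$. Bi-exactness, or merely additivity in each variable, then gives $0\boxtimes\beta=0$. Hence $\Loc_{Z\times Y}^{\mathrm{tor}}(c\boxtimes\beta)$ is defined.

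Next, take a supported refinement $\widetilde c:\one_X\to i_*i^!A[d]$ with $\epsilon_{A[d]}\circ\widetilde c=c$. We form the external product $\widetilde c\boxtimes\beta$ as in Definition~\ref{def:ext-prod-coh}. This is a morphism
\[
\one_{X\times Y}\simeq\one_X\boxtimes\one_Y\longrightarrow (i_*i^!A)\boxtimes B[d+e]\simeq I_*(i^!A\boxtimes B)[d+e]\simeq I_*I^!(A\boxtimes B)[d+e],
\]
where the last two identifications are the assumed isomorphisms. Using the hypothesis that they are compatible with counits, the composite $I_*I^!(A\boxtimes B)\to A\boxtimes B$ corresponds to $\epsilon_A\boxtimes\id_B$. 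Functoriality of $\boxtimes$ in the first variable then gives
\[
\epsilon_{A\boxtimes B}\circ(\widetilde c\boxtimes\beta)=(\epsilon_A\circ\widetilde c)\boxtimes\beta=c\boxtimes\beta.
\]
Hence $\widetilde c\boxtimes\beta\in\mathrm{Lift}^{d+e}_{Z\times Y}(c\boxtimes\beta)$.

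It remains to check that this is a morphism of torsors, i.e.\ equivariant for a group homomorphism. The map $\widetilde c\mapsto\widetilde c\boxtimes\beta$ is additive, since $\boxtimes$ is bi-additive, so the assignment $g\mapsto g\boxtimes\beta$ is a homomorphism $H_Z^d(X;A)\to H_{Z\times Y}^{d+e}(X\times Y;A\boxtimes B)$. It carries $\ker(\forg)$ into $\ker(\forg)$ by the computation above applied with $c=0$. Equivalently, it carries $\mathrm{im}(\delta)$ into $\mathrm{im}(\delta)$, by Lemma~\ref{lem:LiftZ-torsor}. The map on lifts is therefore equivariant.

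We then pass to adjoints under $i_*\dashv i^!$ and $I_*\dashv I^!$. The adjoint of $\widetilde c\boxtimes\beta$ must be identified with $\lambda\boxtimes\beta$, where $\lambda=\mathrm{adj}(\widetilde c)$ and the product is taken on $Z\times Y$ with values in $i^!A\boxtimes B\simeq I^!(A\boxtimes B)$. This identification follows from $\widetilde c=i_*\lambda\circ\eta$, together with $i_*\lambda\boxtimes\beta\simeq I_*(\lambda\boxtimes\beta)$ (naturality of the first assumed isomorphism) and the compatibility of units, $(i_*\one_Z)\boxtimes\one_Y\simeq I_*\one_{Z\times Y}$. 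This yields \eqref{eq:Loc-box-map}. When both torsors are singletons, the image of the unique element is the unique element, so $\Loc_{Z\times Y}(c\boxtimes\beta)=\Loc_Z(c)\boxtimes\beta$.

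The main obstacle is this adjunction bookkeeping. One must verify that the assumed isomorphisms intertwine not only the counits but also the unit $\one_X\to i_*\one_Z$, and hence the adjunction bijections. I would deduce this from counit compatibility via the triangle identities, as in Step~2 of Proposition~\ref{prop:Loc-proper}. Failing that, I would record it as part of the "compatible with the corresponding counits" hypothesis. Surjectivity is not claimed.
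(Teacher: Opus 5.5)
Your proposal is correct and follows the paper's route: you form $\widetilde c\boxtimes\beta$, use counit compatibility to see that its forget-support image is $c\boxtimes\beta$, and pass to adjoints. You are more careful than the paper, which checks neither equivariance along $g\mapsto g\boxtimes\beta$ nor that the adjoint of $\widetilde c\boxtimes\beta$ is $\lambda\boxtimes\beta$. On the latter point your fallback is the right choice: the bijection $H^d_Z(X;A)\cong\Hom_{\C(Z)}(\one_Z,i^!A[d])$ comes from $i^*\dashv i_*$, so counit compatibility for $i_*\dashv i^!$ does not by itself give the needed unit compatibility via triangle identities, and it must be added to the hypotheses, as the paper tacitly does.
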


\begin{proof}
The vanishing of $J^*(c\boxtimes\beta)$ follows from functoriality of pullback and the identity
$J^*(c\boxtimes\beta)=(j^*c)\boxtimes\beta=0$.
Let $\widetilde c:\one_X\to i_*i^!A[d]$ be a supported refinement of $c$.  Forming the external product with $\beta$ and using the stated compatibility gives a morphism
\begin{equation*}
\one_{X\times Y}
\longrightarrow
(i_*i^!A[d])\boxtimes B[e]
\simeq
I_*((i^!A)\boxtimes B)[d+e].
\end{equation*}
Compatibility with the counits identifies its image under the forget-support map with $c\boxtimes\beta$.  Thus it is a supported refinement of $c\boxtimes\beta$ along $Z\times Y$.  Passing to adjoints under $I_*\dashv I^!$ and using $I^!(A\boxtimes B)\simeq i^!A\boxtimes B$ gives the morphism of torsors \eqref{eq:Loc-box-map}.  If both torsors are singletons, this morphism carries the unique element of the source to the unique element of the target, which is the stated identity of canonical localised classes.
\end{proof}

\subsection{Characterisation by supported refinements}

\begin{proposition} \label{thm:Loc-universal}
Fix the ambient six-functor formalism and a closed immersion $i:Z\hookrightarrow X$ with open complement $j:U\hookrightarrow X$.
Let $\Lambda_Z$ be any assignment which, for every $A\in\C(X)$ and every class $c\in H^d(X;A)$ satisfying $j^*c=0$,
produces an element
$
\Lambda_Z(c)\in \Hom_{\C(Z)}(\one_Z,i^!A[d]),
$
and suppose that the following conditions hold:
\begin{itemize}
\item \textbf{(triangle compatibility)} if
$
\widetilde\Lambda_Z(c):\one_X\to i_*i^!A[d]
$
denotes the adjoint of $\Lambda_Z(c)$ under $i_*\dashv i^!$, then the composite
\begin{equation*}
\one_X\xrightarrow{\widetilde\Lambda_Z(c)} i_*i^!A[d]\xrightarrow{\epsilon_{A[d]}} A[d]
\end{equation*}
is equal to $c$;
\item \textbf{(functoriality)} $\Lambda$ satisfies the base-change, proper-pushforward, and $\boxtimes$-compatibilities of
Propositions~\ref{prop:Loc-bc}, \ref{prop:Loc-proper}, and \ref{prop:Loc-box};
\item \textbf{(excision)} $\Lambda$ is local near $Z$ in the sense of \ref{prop:Loc-excision}.
\end{itemize}
Then, for every such class $c$, one has
$
\Lambda_Z(c)\in \Loc_Z^{\mathrm{tor}}(c).
$
Moreover, if $\Loc_Z^{\mathrm{tor}}(c)$ is a singleton, then
$
\Lambda_Z(c)=\Loc_Z(c).
$
\end{proposition}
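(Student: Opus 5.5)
The proof uses only the triangle-compatibility hypothesis; the functoriality and excision hypotheses are not needed for membership in the torsor. They only record that $\Lambda$ behaves like the constructions of Propositions~\ref{prop:Loc-bc}, \ref{prop:Loc-proper}, \ref{prop:Loc-box} and~\ref{prop:Loc-excision}.

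Fix $A\in\C(X)$ and $c\in H^d(X;A)$ with $j^*c=0$. Let $\widetilde\Lambda_Z(c):\one_X\to i_*i^!A[d]$ be the adjoint of $\Lambda_Z(c)$. By Definition~\ref{def:supports}, the forget-support map is postcomposition with the counit $\epsilon_{A[d]}$. The triangle-compatibility hypothesis says exactly that $\epsilon_{A[d]}\circ\widetilde\Lambda_Z(c)=c$, so $\forg(\widetilde\Lambda_Z(c))=c$. Hence $\widetilde\Lambda_Z(c)\in\mathrm{Lift}_Z^d(c)$ by Definition~\ref{def:LocZ}(1).

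Adjunction under $i_*\dashv i^!$ is a bijection. Applying it takes $\widetilde\Lambda_Z(c)$ back to $\Lambda_Z(c)$, so by Definition~\ref{def:LocZ}(2) we get $\Lambda_Z(c)\in\Loc_Z^{\mathrm{tor}}(c)$.

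For the singleton case, assume $\Loc_Z^{\mathrm{tor}}(c)$ has one element. By Theorem~\ref{thm:Loc-factor} (or directly by Definition~\ref{def:LocZ}(3)), that element is $\Loc_Z(c)$. Since $\Lambda_Z(c)$ lies in this one-element set, $\Lambda_Z(c)=\Loc_Z(c)$.

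No step is a genuine obstacle. The only point requiring care is that the adjunction in the triangle-compatibility hypothesis is the same one used in Definition~\ref{def:LocZ}, with the same identification $i^*\one_X\simeq\one_Z$. Then taking the adjoint and taking it back are mutually inverse, and the set being tested is exactly $\Loc_Z^{\mathrm{tor}}(c)$.
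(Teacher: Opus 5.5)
Your proposal is correct and matches the paper's proof: triangle compatibility puts the adjoint $\widetilde\Lambda_Z(c)$ in $\mathrm{Lift}_Z^d(c)$, the adjunction bijection carries this to $\Lambda_Z(c)\in\Loc_Z^{\mathrm{tor}}(c)$, and the singleton case follows at once. The paper also states explicitly that the functoriality and excision hypotheses are not needed for the membership statement.
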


\begin{proof}
We begin with the first assertion. Let
$
\widetilde\Lambda_Z(c):\one_X\to i_*i^!A[d]
$
be the morphism adjoint to $\Lambda_Z(c)$. By the triangle compatibility hypothesis, its image under the counit-induced map
$
\forg:H_Z^d(X;A)\to H^d(X;A)
$
is precisely $c$. Equivalently, $\widetilde\Lambda_Z(c)$ is a supported refinement of $c$ in the sense of \ref{def:LocZ}. Thus
$
\widetilde\Lambda_Z(c)\in \mathrm{Lift}_Z^d(c)\subset H_Z^d(X;A).
$
Passing back across the adjunction isomorphism
$$
H_Z^d(X;A)=\Hom_{\C(X)}(\one_X,i_*i^!A[d])\cong \Hom_{\C(Z)}(\one_Z,i^!A[d]),
$$
we find that $\Lambda_Z(c)$ is precisely the adjoint of an element of $\mathrm{Lift}_Z^d(c)$. By definition of the localisation torsor,
this means that
$
\Lambda_Z(c)\in \Loc_Z^{\mathrm{tor}}(c).
$

This proves the required inclusion. Observe that the additional assumptions of functoriality and excision are not needed for this membership statement: they serve rather to make explicit that any candidate local-term construction enjoying the stated compatibilities is still forced to factor through the same torsor of supported refinements.

For the second assertion, assume that $\Loc_Z^{\mathrm{tor}}(c)$ is a singleton. Equivalently, the supported refinement of $c$ is unique. By \ref{def:LocZ}, its unique element is the canonical localised class $\Loc_Z(c)$. Since we have already proved that $\Lambda_Z(c)$ belongs to this singleton, it follows immediately that
$
\Lambda_Z(c)=\Loc_Z(c).
$
\end{proof}

\subsection{A geometric non-canonicity phenomenon}
\label{subsec:geometric-noncanonicity-phenomenon}

The elementary constructible-sheaf examples may be replaced by the geometric
examples developed in Section~\ref{subsec:isolated-singularities-milnor-refinements}.
There the same non-canonicity is carried by the topology of singular links: a
characteristic-class defect supported on the singular locus determines a torsor
of local refinements, and the corresponding secondary indeterminacy is identified, under the
stated purity and orientation hypotheses, with link transgression.  This is the
form in which the phenomenon is used in the algebro-geometric applications below.

\section{Duality, orientations, and local-to-global index formulas}\label{sec:dual}

\subsection{Verdier duality (axiomatic)}

\begin{definition} \label{def:duality}
Assume that for each $X$ we are given a \emph{dualizing object} $\omega_X\in\C(X)$ and an internal Hom functor
$\underline{\Hom}(-,-)$ (or a model-specific derived internal Hom) so that Verdier duality is the contravariant functor
$$
\D_X(M):=\underline{\Hom}(M,\omega_X).
$$
We use only the formal consequences needed to speak about trace/pairing maps when they exist in the chosen model.
\end{definition}

\subsection{Index formulas supported on \texorpdfstring{$Z$}{Z}}

\begin{definition} \label{def:global-index}
Let $p_X:X\to \pt$ be proper and let $A\in\C(\pt)$.
For $d\in\mathbb Z$ define the global index map in degree $d$ (when the proper pushforward on cohomology is available)
by
$$
\int_X(-)\ :=\ (p_X)_*:\ H^d(X;p_X^*A)\longrightarrow H^d(\pt;A)=\Hom_{\C(\pt)}(\one_\pt,A[d]).
$$
When $d=0$ and $A=\one_\pt$ this takes values in $R=\End_{\C(\pt)}(\one_\pt)$.
\end{definition}

\begin{definition} \label{def:local-index}
Let $i:Z\hookrightarrow X$ be closed and $p_X$ proper.
Assume the functoriality needed to apply \ref{prop:Loc-proper} to $p_X:X\to \pt$.
Given a localised class
$$
\Loc_Z(c)\in \Hom_{\C(Z)}(\one_Z,i^!p_X^*A[d]),
$$
whenever the model provides a canonical way to view $\Loc_Z(c)$ as a class on $Z$ with coefficients pulled back from $\pt$
(for instance via purity/orientations in later sections), we define its local index by proper pushforward along $p_Z:=p_X\circ i$:
$$
\int_Z \Loc_Z(c)\ :=\ (p_Z)_*\big(\Loc_Z(c)\big)\ \in\ H^d(\pt;A).
$$
\end{definition}

\begin{theorem} \label{thm:global=local}
Assume the setup of \ref{def:local-index} and the functoriality needed to apply \ref{prop:Loc-proper}
to $p_X:X\to\pt$ with $W=\pt$.
If $c\in H^d(X;p_X^*A)$ satisfies $j^*(c)=0$ on $U=X\setminus Z$, then
$$
\int_X c\ =\ \int_Z \Loc_Z(c)
\qquad\text{in}\qquad
H^d(\pt;A).
$$
If $Z=\coprod_{\lambda} Z_\lambda$ is a finite \emph{disjoint} union of closed subsets, then localisation is additive and
$$
\int_X c\ =\ \sum_{\lambda}\int_{Z_\lambda}\Loc_{Z_\lambda}(c).
$$
\end{theorem}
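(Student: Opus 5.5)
The first identity will come from applying Proposition~\ref{prop:Loc-proper} to $p=p_X:X\to\pt$, with $W=\pt$ and $k=\id_\pt$. Since $W=\pt$, the open complement $V$ is empty and $k^!=\id$, so $\Hom_{\C(W)}(\one_\pt,k^!A[d])=H^d(\pt;A)$. The forget-support map for $W=\pt$ is the identity, so $\mathrm{Lift}^d_\pt(p_*c)=\{p_*c\}$ and the target torsor is the singleton $\{\int_X c\}$. We are given that $\Loc_Z(c)$ is defined, meaning the source torsor is a singleton. The singleton clause of Proposition~\ref{prop:Loc-proper} then yields $\int_X c=(p_Z)_*\Loc_Z(c)$. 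The right-hand side is $\int_Z\Loc_Z(c)$ by Definition~\ref{def:local-index}, once the model's identification of $\Loc_Z(c)$ with a class having coefficients pulled back from $\pt$ is matched with the Beck--Chevalley identification $k^!A\simeq(p_Z)_*i^!p_X^*A$ used there.

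Checking that these two identifications agree is the point I expect to need the most care. I would handle it by taking Definition~\ref{def:local-index} to mean precisely the pushforward recipe displayed in Step~3 of that proof:
\[
\one_\pt\to(p_Z)_*\one_Z\to(p_Z)_*i^!p_X^*A[d]\to p_{X*}p_X^*A[d]\to A[d].
\]
I would state this convention explicitly. The only genuine compatibility left is then that the pasting in Step~2 of Proposition~\ref{prop:Loc-proper} gives $\epsilon\circ\widetilde{(p_*c)}=p_*c$, and that proposition already provides this.

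For the additive statement with $Z=\coprod_\lambda Z_\lambda$, I would argue as follows.
\begin{enumerate}[(i)]
\item \textbf{Splitting the exceptional pullback.} For a finite disjoint union of closed pieces, $i_*i^!A\simeq\bigoplus_\lambda i_{\lambda*}i_\lambda^!A$. To see this, note that each $Z_\lambda$ has an open neighbourhood $V_\lambda=X\setminus\bigcup_{\mu\ne\lambda}Z_\mu$ meeting $Z$ only in $Z_\lambda$. Excision (Proposition~\ref{prop:Loc-excision}) identifies supports on $Z_\lambda$ in $X$ with supports on $Z_\lambda$ in $V_\lambda$. From this I get
\[
H^d_Z(X;A)=\bigoplus_\lambda H^d_{Z_\lambda}(X;A),
\]
with $\forg_Z=\sum_\lambda\forg_{Z_\lambda}$.
\item \textbf{Decomposing the localised class.} Under this splitting the refinement $\Loc_Z(c)$ decomposes into components $\ell_\lambda\in H^d_{Z_\lambda}(X;A)$.
\item \textbf{Identifying the components.} Each $\ell_\lambda$ restricts on $V_\lambda$ to a supported refinement of $c|_{V_\lambda}$. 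This is consistent with the notation $\Loc_{Z_\lambda}(c)$ being read as the localisation of $c|_{V_\lambda}$ along $Z_\lambda$, which excision identifies with a class on $X$. Note that $c$ itself need not vanish off $Z_\lambda$, so this reading is needed. When the relevant torsors are singletons, $\ell_\lambda=\Loc_{Z_\lambda}(c)$.
\item \textbf{Additivity of pushforward.} Pushforward is additive, since $(p_Z)_*$ on a direct sum is the sum of the $(p_{Z_\lambda})_*$. Applying the first part then gives $\int_X c=\sum_\lambda\int_{Z_\lambda}\Loc_{Z_\lambda}(c)$.
\end{enumerate}

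The main obstacle is step~(i), the direct-sum decomposition of $i^!$ over a disjoint union. It does not follow from recollement alone. I would derive it by applying the recollement triangle for $Z_\lambda\subset V_\lambda$ together with (BC), or else add it as a hypothesis that holds in all the listed models.
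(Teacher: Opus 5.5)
Your proposal is correct and follows essentially the paper's proof. Proposition~\ref{prop:Loc-proper} with $W=\pt$ gives $\int_X c=(p_Z)_*\Loc_Z(c)$, and additivity then comes from the splitting $i_*i^!\cong\bigoplus_\lambda i_{\lambda*}i_\lambda^!$, the decomposition of the localised class into its $Z_\lambda$-components (which is how the paper defines $\Loc_{Z_\lambda}(c)$), and additivity of pushforward. The paper only asserts that splitting, and your suggested derivation from the recollement triangles, combined by the octahedral axiom, does work, since the connecting map dies because $\Hom(i_{\mu*}K,i_{\lambda*}L[1])\cong\Hom(i_\lambda^*i_{\mu*}K,L[1])=0$ when $Z_\lambda\cap Z_\mu=\varnothing$.
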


\begin{proof}
Write $p_X:X\to \pt$ for the structure morphism, and let $p_Z:Z\to \pt$ be its restriction. Since $W=\pt$, the closed immersion $k:W\hookrightarrow \pt$ is the identity of the point, and the open complement is empty. In particular, the proper-pushforward formalism of \ref{prop:Loc-proper} applies to the Cartesian square
\begin{equation*}\label{eq:global-local-cartesian}
\begin{tikzcd}[row sep=large, column sep=large]
Z \arrow[r, "i"] \arrow[d, "p_Z"'] & X \arrow[d, "p_X"] \\
\pt \arrow[r, "\id_{\pt}"'] & \pt .
\end{tikzcd}
\end{equation*}
Let $\widetilde c:\one_X\to i_*i^!p_X^*A[d]$ be the unique supported refinement corresponding to $\Loc_Z(c)$; thus, by definition,
\begin{equation*}\label{eq:def-supported-refinement-global-local}
\epsilon_{p_X^*A[d]}\circ \widetilde c \;=\; c
\qquad\text{in}\qquad
\Hom_{\C(X)}(\one_X,p_X^*A[d]).
\end{equation*}
Applying \ref{prop:Loc-proper} to the proper morphism $p_X$ and to the class $c\in H^d(X;p_X^*A)$, one finds that the localisation of the proper pushforward $p_{X*}(c)\in H^d(\pt;A)$ along $W=\pt$ is represented by the proper pushforward of the localised class on $Z$. Since localisation along the identity of the point is tautological, this says precisely that
\begin{equation}\label{eq:proper-pushforward-local-class}
p_{X*}(c)\;=\; p_{Z*}\bigl(\Loc_Z(c)\bigr)
\qquad\text{in}\qquad
H^d(\pt;A).
\end{equation}
By the definition of the global and local index maps in \ref{def:local-index}, one has
\begin{equation}\label{eq:index-definitions-unwound}
\int_X c \;=\; p_{X*}(c),
\qquad
\int_Z \Loc_Z(c)\;=\; p_{Z*}\bigl(\Loc_Z(c)\bigr).
\end{equation}
Combining \eqref{eq:proper-pushforward-local-class} and \eqref{eq:index-definitions-unwound} gives
\begin{equation*}
\int_X c \;=\; \int_Z \Loc_Z(c)
\qquad\text{in}\qquad
H^d(\pt;A),
\end{equation*}
which is the first statement.
Assume now that $Z=\bigsqcup_{\lambda} Z_\lambda$ is a finite disjoint union of closed subsets, and write
$i_\lambda:Z_\lambda\hookrightarrow X$ for the corresponding closed immersions. Because the union is disjoint, the closed immersion $i:Z\hookrightarrow X$ is the coproduct of the $i_\lambda$, and one has a canonical decomposition
\begin{equation*}
i_*i^!\;\cong\;\bigoplus_\lambda i_{\lambda *}i_\lambda^!.
\end{equation*}
Consequently,
\begin{equation}\label{eq:disjoint-splitting-support-cohomology}
H_Z^d(X;p_X^*A)
=
\Hom_{\C(X)}\bigl(\one_X,i_*i^!p_X^*A[d]\bigr)
\cong
\bigoplus_\lambda
\Hom_{\C(X)}\bigl(\one_X,i_{\lambda *}i_\lambda^!p_X^*A[d]\bigr)
=
\bigoplus_\lambda H_{Z_\lambda}^d(X;p_X^*A).
\end{equation}
Let $\widetilde c\in H_Z^d(X;p_X^*A)$ be the supported refinement corresponding to $\Loc_Z(c)$. Under the decomposition \eqref{eq:disjoint-splitting-support-cohomology}, write
\begin{equation*}
\widetilde c=\sum_\lambda \widetilde c_\lambda,
\qquad
\widetilde c_\lambda\in H_{Z_\lambda}^d(X;p_X^*A).
\end{equation*}
Passing to adjoints, this yields a decomposition
\begin{equation}\label{eq:decompose-localised-class}
\Loc_Z(c)=\sum_\lambda \Loc_{Z_\lambda}(c),
\end{equation}
where $\Loc_{Z_\lambda}(c)$ denotes the component of the localised class supported on $Z_\lambda$. 
Now, apply $p_{Z*}$, or equivalently sum the pushforwards $p_{Z_\lambda *}$, to \eqref{eq:decompose-localised-class}. Since proper pushforward is additive with respect to finite direct sums, one obtains
\begin{equation*}
\int_Z \Loc_Z(c)
=
\sum_\lambda \int_{Z_\lambda}\Loc_{Z_\lambda}(c).
\end{equation*}
Together with the first part of the theorem, this gives
\begin{equation*}
\int_X c
=
\sum_\lambda \int_{Z_\lambda}\Loc_{Z_\lambda}(c),
\end{equation*}
as required.
\end{proof}

\section{Purity and Euler-denominator formulae}\label{sec:pure}

In this section we work from the outset in the range of coefficients in which the relevant Euler classes are invertible. This is the natural setting for the denominator formulae to follow.

\subsection{Purity, orientation, and invertible Euler classes}

\begin{definition}\label{def:purity-axiom}
Let $i:Z\hookrightarrow X$ be a regular immersion of codimension $c$.
An \emph{oriented purity formalism for $i$} consists of the following data:
\begin{itemize}
\item an object $\Th_i\in\C(Z)$ and an isomorphism
$
\pi_i:i^!\one_X\xrightarrow{\sim}\Th_i[-2c];
$
\item for every $A\in\C(X)$, a functorial $i^!$-linearity isomorphism
$
\mu_A:i^!\one_X\otimes i^*A\xrightarrow{\sim} i^!A;
$
\item an orientation
$
\omega_i:\Th_i\xrightarrow{\sim}\one_Z[2c];
$
\item the projection formula for $i$, whenever invoked.
\end{itemize}
\end{definition}

\begin{definition} \label{def:thom-euler}
Assume \ref{def:purity-axiom}. The \emph{Thom class} of $i$ is the shifted unit morphism
$
u(i):\one_X\to i_*i^!\one_X[2c].
$
The \emph{Euler class} of $i$ is the composite
$
e(i):\one_Z\to\one_Z[2c]
$
given by
\begin{equation*}
\begin{tikzcd}[column sep=large]
\one_Z
\arrow[r, "\sim"] &
i^*\one_X
\arrow[r, "i^*u(i)"] &
i^*i_*i^!\one_X[2c]
\arrow[r, "\varepsilon"] &
i^!\one_X[2c]
\arrow[r, "{\pi_i[2c]}"] &
\Th_i
\arrow[r, "\omega_i"] &
\one_Z[2c].
\end{tikzcd}
\end{equation*}
Here $\varepsilon:i^*i_*\to\id$ is the counit of the adjunction $i^*\dashv i_*$.
\end{definition}
For the remainder of this section, we work after passing to a coefficient localisation, or to a periodic or twisted coefficient theory, in which multiplication by the Euler class
$e(i)\in H^{2c}(Z;\one_Z)$
is an isomorphism in the relevant degrees.

\subsection{Thom operators and self-intersection}

\begin{definition} \label{def:Th-operator}
Assume \ref{def:purity-axiom}. For $A\in\C(X)$ and $\beta\in H^{d-2c}(Z;i^*A)$, define
$
\Th_i(A)(\beta)\in H_Z^d(X;A)
$
to be the class represented by the composite
\begin{equation*}
\begin{tikzcd}[column sep=large]
\one_X
\arrow[r, "u(i)"] &
i_*\bigl(i^!\one_X[2c]\bigr)
\arrow[r, "{i_*(\id\otimes\beta)}"] &
i_*\bigl(i^!\one_X[2c]\otimes i^*A[d-2c]\bigr)
\arrow[r, "{i_*(\mu_A[2c])}"] &
i_*i^!A[d].
\end{tikzcd}
\end{equation*}
We write
$
i_*:H^{d-2c}(Z;i^*A)\to H^d(X;A)
$
for the induced Gysin morphism
$
i_*(\beta):=\forg\bigl(\Th_i(A)(\beta)\bigr).
$
\end{definition}

\begin{theorem} \label{thm:self-int}
Assume Definitions~\ref{def:recollement} and \ref{def:purity-axiom}. Then, for every $A\in\C(X)$ and every $\beta\in H^{d-2c}(Z;i^*A)$, one has
\begin{equation}\label{eq:self-int-main}
i^*i_*(\beta)=\beta\smile e(i)
\qquad\text{in}\qquad
H^d(Z;i^*A).
\end{equation}
Since $e(i)$ is invertible, this may be rewritten as
\begin{equation}\label{eq:self-int-division}
\beta=\frac{i^*i_*(\beta)}{e(i)}
\qquad\text{in}\qquad
H^{d-2c}(Z;i^*A).
\end{equation}
In particular,
\begin{equation}\label{eq:euler-as-self-int}
e(i)=i^*i_*(1_Z).
\end{equation}
\end{theorem}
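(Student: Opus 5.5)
The plan is to unwind both sides of \eqref{eq:self-int-main} as composites of morphisms in $\C(Z)$ and compare them using only naturality of the counit $\varepsilon:i^*i_*\to\id$ and the $i^!$-linearity isomorphism $\mu_A$.

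\textbf{Step 1: unwind the left-hand side.} By Definition~\ref{def:Th-operator}, $i_*(\beta)=\forg(\Th_i(A)(\beta))$. Applying $i^*$ gives
\[
\one_Z\simeq i^*\one_X\xrightarrow{i^*u(i)} i^*i_*(i^!\one_X[2c])\xrightarrow{i^*i_*(\id\otimes\beta)} i^*i_*(i^!\one_X[2c]\otimes i^*A[d-2c])\xrightarrow{i^*i_*\mu_A[2c]} i^*i_*i^!A[d]\xrightarrow{i^*\epsilon} i^*A[d],
\]
where $\epsilon:i_*i^!\to\id$ is the counit of $i_*\dashv i^!$.

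\textbf{Step 2: move the counit forward.} Naturality of $\varepsilon:i^*i_*\to\id$ lets us slide the last three maps past $\varepsilon$. This rewrites the composite as $i^*u(i)$, followed by $\varepsilon_{i^!\one_X[2c]}$, then $\id\otimes\beta$, then $\mu_A[2c]$, then a map $i^!A[d]\to i^*A[d]$. That last map is the adjoint composite $i^!A\to i^*i_*i^!A\to i^*A$, obtained from $i^*\epsilon$ after inserting the unit of $i^*\dashv i_*$. By the definition of $e(i)$, the first two arrows are $\one_Z\to i^!\one_X[2c]$, which equals $e(i)$ once the isomorphism $\omega_i\circ\pi_i[2c]$ is applied.

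\textbf{Step 3: the key compatibility.} One has to show that $\mu_A$, followed by the canonical map $i^!A\to i^*A$, agrees with $i^!\one_X\otimes i^*A\to \one_Z[2c]\otimes i^*A$ through the orientation, suitably shifted. Put differently, the "forget supports then restrict" map on $i^!A$ must be the $i^*A$-linear extension of the same map for $\one_X$. This is a compatibility between $\mu_A$, the orientation data of Definition~\ref{def:purity-axiom}, and the counits, and it is the main obstacle: the axioms only posit $\mu_A$ as some functorial isomorphism.

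I would try to settle it using the projection formula for $i$. Take $\mu_A$ to be the standard map $i^!\one_X\otimes i^*A\to i^!(\one_X\otimes A)$, which is adjoint to $i_*(i^!\one_X\otimes i^*A)\simeq i_*i^!\one_X\otimes A\to A$. Its compatibility with the counits then holds by construction. If needed, I would state this normalisation explicitly as part of the purity formalism. Granting it, the composite becomes $(\text{the map for }\one_X)\otimes\id_{i^*A}$ precomposed with $\id\otimes\beta$, and by Definition~\ref{def:cup} this is $e(i)\smile\beta=\beta\smile e(i)$. The final equality uses graded commutativity, since $e(i)$ sits in even degree $2c$.

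\textbf{Step 4: consequences.} Equation \eqref{eq:self-int-division} follows immediately from the standing assumption that cup product with $e(i)$ is an isomorphism in the relevant degrees. Equation \eqref{eq:euler-as-self-int} is the special case $A=\one_X$, $d=2c$, $\beta=1_Z=\id_{\one_Z}$. In that case Step 1 collapses to the defining composite of $e(i)$ in Definition~\ref{def:thom-euler}.
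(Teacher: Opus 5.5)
Your route is the paper's: unwind \(\Th_i(A)(\beta)\), apply \(i^*\), read off \(e(i)\) from the initial segment and \(\beta\) from the rest. You are more careful than the paper. Its displayed composite stops at \(i^!A[d]\), so it computes the \(i^*\dashv i_*\)-adjoint of \(\Th_i(A)(\beta)\) rather than \(i^*i_*(\beta)\). It then silently replaces the canonical map \(\nu_A:=i^*\epsilon_A\circ\varepsilon_{i^!A}^{-1}\colon i^!A\to i^*A\) by transport through \(\pi_i\) and \(\omega_i\).

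However, your repair only covers half of what is needed. The projection-formula normalisation of \(\mu_A\) gives \(\nu_A\circ\mu_A=\nu_{\one_X}\otimes\id_{i^*A}\), hence
\[
i^*i_*(\beta)=\bigl(\nu_{\one_X}[2c]\circ\varepsilon\circ i^*u(i)\bigr)\smile\beta=i^*i_*(1_Z)\smile\beta ,
\]
and you still need \(i^*i_*(1_Z)=e(i)\). Your Step 4 treats this as a tautology, but it is not one. At \(A=\one_X\), \(\beta=1_Z\), your Step 1 composite ends with \(i^*\epsilon_{\one_X}[2c]\), i.e.\ with \(\nu_{\one_X}[2c]\). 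Definition~\ref{def:thom-euler} instead ends with \(\omega_i\circ\pi_i[2c]\), and nothing in Definition~\ref{def:purity-axiom} relates the two. Nor should one expect them to agree as maps: \(\omega_i\circ\pi_i\) is an isomorphism by hypothesis. In the usual models, where \(i^!\one_X\simeq\one_Z[-2c]\), the map \(\nu_{\one_X}\) is the Euler class itself and need not be invertible; for instance it is zero for the origin in \(\mathbb C^n\), \(n\ge 1\). The same slip occurs in the ``put differently'' sentence of Step 3, which drops the orientation from the compatibility you had correctly stated one sentence earlier. So \eqref{eq:euler-as-self-int} is not a special case of your argument; it is exactly its missing input.

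To close the gap you need one further piece of data. Either define the Euler class as the restriction of the Thom class, \(e(i):=i^*i_*(1_Z)=\nu_{\one_X}[2c]\circ\varepsilon\circ i^*u(i)\), or add to the purity formalism the axiom \(\omega_i\circ\pi_i[2c]\circ\varepsilon\circ i^*u(i)=\nu_{\one_X}[2c]\circ\varepsilon\circ i^*u(i)\). In either case keep your projection-formula normalisation of \(\mu_A\). Note that this requires the projection-formula map \(i^!\one_X\otimes i^*A\to i^!A\) to be invertible, as Definition~\ref{def:purity-axiom} demands of \(\mu_A\). With these additions your Steps 1--3 give a complete proof, more precise than the paper's, and \eqref{eq:self-int-division} follows as you say. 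One small correction in Step 2: the arrow \(i^!A\to i^*i_*i^!A\) is \(\varepsilon_{i^!A}^{-1}\), invertible because \(i_*\) is fully faithful; it is not a unit of \(i^*\dashv i_*\).
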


\begin{proof}
By definition, $i_*(\beta)$ is the image under forget-support of the class $\Th_i(A)(\beta)\in H_Z^d(X;A)$. Thus $i^*i_*(\beta)$ is obtained by applying $i^*$ to the composite defining $\Th_i(A)(\beta)$ and then composing with the counit $\varepsilon:i^*i_*\to\id$. Writing out the definition from \ref{def:Th-operator}, one finds
\begin{equation*}
\begin{tikzcd}[column sep=large]
\one_Z
\arrow[r, "{i^*u(i)}"] &
i^*i_*i^!\one_X[2c]
\arrow[r, "\varepsilon"] &
i^!\one_X[2c]
\arrow[r, "{\id\otimes\beta}"] &
i^!\one_X[2c]\otimes i^*A[d-2c]
\arrow[r, "{\mu_A[2c]}"] &
i^!A[d].
\end{tikzcd}
\end{equation*}
After transport through the purity isomorphism $\pi_i$ and the orientation $\omega_i$, the initial segment of this composite is precisely the Euler class $e(i):\one_Z\to\one_Z[2c]$. The remaining factor is precisely $\beta$. Hence the resulting class is the cup-product $\beta\smile e(i)$, which proves \eqref{eq:self-int-main}. Since $e(i)$ is invertible by the standing hypothesis of this section, \eqref{eq:self-int-division} follows immediately. Finally, taking $A=\one_X$ and $\beta=1_Z$ in \eqref{eq:self-int-main} yields \eqref{eq:euler-as-self-int}.
\end{proof}

\subsection{Computation of the universal localised class}

\begin{definition} \label{def:Thom-iso}
We say that \emph{Thom isomorphism holds for $i$ and $A$} if the map
$
\Th_i(A):H^{d-2c}(Z;i^*A)\to H_Z^d(X;A)
$
of \ref{def:Th-operator} is an isomorphism.
\end{definition}

\begin{theorem} \label{thm:Loc-by-euler}
Assume Definitions~\ref{def:recollement} and \ref{def:purity-axiom}. Let $A\in\C(X)$ and let $c\in H^d(X;A)$ satisfy $j^*(c)=0$. Assume moreover that Thom isomorphism holds for $i$ and $A$. Then the canonical localised class $\Loc_Z(c)$ corresponds, under Thom isomorphism and adjunction, to the class
\begin{equation}\label{eq:gamma-by-denominator}
\gamma=\frac{i^*c}{e(i)}
\qquad\text{in}\qquad
H^{d-2c}(Z;i^*A),
\end{equation}
and one has
\begin{equation}\label{eq:c-gysin-denominator}
c=i_*\!\left(\frac{i^*c}{e(i)}\right).
\end{equation}
\end{theorem}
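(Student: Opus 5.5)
The plan is to work entirely on the closed stratum via the Thom isomorphism. The first step is to show that the torsor $\mathrm{Lift}_Z^d(c)$ is a singleton, so that $\Loc_Z(c)$ exists. By Definition~\ref{def:Thom-iso}, every supported class has the form $\Th_i(A)(\beta)$ for a unique $\beta\in H^{d-2c}(Z;i^*A)$. By Definition~\ref{def:Th-operator} its image under $\forg$ is $i_*(\beta)$. Suppose $\Th_i(A)(\beta)\in\ker(\forg)$. Then $i_*(\beta)=0$, and Theorem~\ref{thm:self-int} gives $\beta\smile e(i)=i^*i_*(\beta)=0$. Invertibility of multiplication by $e(i)$, which is the standing hypothesis of this section, forces $\beta=0$. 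Hence $\ker(\forg)=\operatorname{im}(\delta)=0$ in degree $d$, and Theorem~\ref{thm:Loc-factor} produces the canonical class $\Loc_Z(c)$.

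The second step identifies the Thom preimage. Let $\widetilde c$ be the unique supported refinement, and let $\gamma:=\Th_i(A)^{-1}(\widetilde c)$. Then
\[
c=\forg(\widetilde c)=\forg\bigl(\Th_i(A)(\gamma)\bigr)=i_*(\gamma).
\]
Applying $i^*$ and using Theorem~\ref{thm:self-int} gives $i^*c=i^*i_*(\gamma)=\gamma\smile e(i)$. Dividing by the invertible Euler class yields $\gamma=i^*c/e(i)$, which is \eqref{eq:gamma-by-denominator}. Substituting this back into $c=i_*(\gamma)$ gives \eqref{eq:c-gysin-denominator}. The phrase ``under Thom isomorphism and adjunction'' is then just the composite of $\Th_i(A)^{-1}$ with the adjunction bijection $H_Z^d(X;A)\cong\Hom_{\C(Z)}(\one_Z,i^!A[d])$ used in Definition~\ref{def:LocZ}.

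The main point needing care is bookkeeping rather than substance. The quotient $i^*c/e(i)$ must be read as $(\smile e(i))^{-1}(i^*c)$ in the degree-$(d-2c)$ group, which is where the invertibility hypothesis is used. The cup product $\beta\smile e(i)$ must also be taken with the $\one_Z\otimes i^*A\simeq i^*A$ identification that is implicit in Theorem~\ref{thm:self-int}. I would also note that the argument does not need existence of a refinement separately, since Lemma~\ref{lem:LiftZ-torsor} already provides it from $j^*c=0$. Even without that lemma, surjectivity of $\Th_i(A)$ together with $c=i_*(\gamma)$ would be consistent. Uniqueness is the only genuinely new input, and it comes from the injectivity argument of the first step, mirroring Proposition~\ref{prop:canonicity-before-euler-inversion}.
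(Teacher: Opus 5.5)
Your proposal is correct and follows the paper's proof: write a supported refinement as $\Th_i(A)(\gamma)$, apply $\forg$ to get $c=i_*(\gamma)$, then use Theorem~\ref{thm:self-int} and the invertibility of $e(i)$ to solve for $\gamma$. Your first step shows $\ker(\forg)=0$, so that $\Loc_Z(c)$ actually exists; the paper leaves this implicit under ``by construction''. It is also already forced by your second step, since that computation gives the same Thom preimage $i^*c/e(i)$ for every refinement.
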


\begin{proof}
Since $j^*(c)=0$, the class $c$ admits a supported refinement
$
\widetilde c:\one_X\to i_*i^!A[d].
$
Because Thom isomorphism holds for $i$ and $A$, there exists a unique class
$
\gamma\in H^{d-2c}(Z;i^*A)
$
such that
$
\widetilde c=\Th_i(A)(\gamma).
$
Passing to forget-support gives
$
c=i_*(\gamma).
$
Applying $i^*$ and using \ref{thm:self-int}, we obtain
$
i^*c=i^*i_*(\gamma)=\gamma\smile e(i).
$
Since $e(i)$ is invertible, it follows that
$
\gamma=i^*c/e(i),
$
which is \eqref{eq:gamma-by-denominator}. Substituting this back into $c=i_*(\gamma)$ yields \eqref{eq:c-gysin-denominator}. By construction, $\gamma$ is the class corresponding to $\Loc_Z(c)$ under Thom isomorphism and adjunction.
\end{proof}
\section{Concentration and localisation of coefficients}\label{sec:concentration}

\begin{definition} \label{def:loc}
Let $R=H^0(\pt;\one_{\pt})$ and let $S\subset R$ be multiplicative.
For an $R$-module $M$ write $M[S^{-1}]=M\otimes_R S^{-1}R$.
\end{definition}

\begin{definition} \label{def:conc}
We say \emph{concentration holds for $(i,j)$ on $A$ after inverting $S$} if
$$
H^*(U;j^*A)[S^{-1}]=0.
$$
Equivalently (by the long exact sequence of the localisation triangle), the forget-support map
$$
\forg:\ H_Z^*(X;A)[S^{-1}]\longrightarrow H^*(X;A)[S^{-1}]
$$
is an isomorphism.
\end{definition}

\begin{theorem} \label{thm:universal-euler}
Assume Definitions~\ref{def:recollement} and \ref{def:purity-axiom}.
Fix multiplicative $S\subset R$ such that concentration holds for $A$ after inverting $S$.
Assume Thom isomorphism holds for $i$ and $A$ after inverting $S$, and assume $e(i)$ is invertible in
$H^{2c}(Z;\one_Z)[S^{-1}]$.
Then for every $\alpha\in H^d(X;A)[S^{-1}]$ one has
$$
\alpha\ =\ i_*\!\left(\frac{i^*\alpha}{e(i)}\right)
\qquad\text{in}\qquad
H^d(X;A)[S^{-1}].
$$
\end{theorem}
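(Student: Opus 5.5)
The plan is to run the argument of Theorem~\ref{thm:Loc-by-euler} after base change of all groups to $S^{-1}R$, with concentration supplying the refinement for an arbitrary class. First I would check that every map involved is $R$-linear, so that it induces a map on $(-)[S^{-1}]$. The $R$-module structures come from Lemma~\ref{lem:R-action}, with $R$ acting by precomposition with $r_X$ on the source $\one_X$. The maps are:
\begin{itemize}
\item $\forg$, $j^*$ and $\delta$, which are postcompositions with structural morphisms, so they commute with the $R$-action as in the functoriality part of Lemma~\ref{lem:R-action};
\item $i^*$, which is pullback, and is $R$-linear because $r_Z=i^*r_X$;
\item the Thom operator $\Th_i(A)$ and cup product with $e(i)$. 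Both are built from the same $r$-compatible composites, using that $r_X$ is natural in the unit and that $\one\otimes\one\simeq\one$ intertwines $r$ with $r\otimes\id$.
\end{itemize}
Since $S^{-1}R$ is flat over $R$, the long exact sequence of Theorem~\ref{thm:les} stays exact after inverting $S$. So $\forg$ is an isomorphism on $H_Z^*(X;A)[S^{-1}]\to H^*(X;A)[S^{-1}]$, which is the equivalent form recorded in Definition~\ref{def:conc}.

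Next, I fix $\alpha\in H^d(X;A)[S^{-1}]$. By concentration there is a unique $\widetilde\alpha\in H^d_Z(X;A)[S^{-1}]$ with $\forg(\widetilde\alpha)=\alpha$; this is the localised analogue of $\Loc_Z$, with no torsor ambiguity because $\ker\forg=0$. By the localised Thom isomorphism there is a unique $\gamma\in H^{d-2c}(Z;i^*A)[S^{-1}]$ with $\Th_i(A)(\gamma)=\widetilde\alpha$. Applying $\forg$ gives $\alpha=i_*(\gamma)$, where $i_*$ is the localised Gysin map of Definition~\ref{def:Th-operator}.

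Then I apply $i^*$ and use the self-intersection identity of Theorem~\ref{thm:self-int}. The identity $i^*i_*(\beta)=\beta\smile e(i)$ holds in the unlocalised groups for all $\beta$, and both sides are $R$-linear in $\beta$, so it persists after $\otimes_R S^{-1}R$. This gives $i^*\alpha=\gamma\smile e(i)$. Since $e(i)$ is a unit in $H^{2c}(Z;\one_Z)[S^{-1}]$, cup product with $e(i)^{-1}$ inverts multiplication by $e(i)$ on the $H^*(Z;\one_Z)[S^{-1}]$-module $H^*(Z;i^*A)[S^{-1}]$. Hence $\gamma=i^*\alpha/e(i)$, and substituting into $\alpha=i_*(\gamma)$ gives the formula.

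I expect the main obstacle to be the bookkeeping in the first step: the $R$-linearity of $\Th_i(A)$ and of the cup product, together with making $H^*(Z;i^*A)[S^{-1}]$ a module over the localised ring $H^*(Z;\one_Z)[S^{-1}]$, so that division by $e(i)$ is meaningful. For the module structure I would use the pairing $\one_Z\otimes i^*A\simeq i^*A$ with Definition~\ref{def:cup}, and check bilinearity over $R$. Associativity in the form $(\gamma\smile e)\smile e^{-1}=\gamma$ follows from associativity of $\otimes$. Everything else is a verbatim transcription of the proof of Theorem~\ref{thm:Loc-by-euler}.
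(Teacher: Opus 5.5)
Your proposal is correct and follows the paper's own argument. Concentration gives a (unique) supported refinement of $\alpha$ after inverting $S$, the localised Thom isomorphism writes it as $\Th_i(A)(\gamma)$, and Theorem~\ref{thm:self-int} together with invertibility of $e(i)$ forces $\gamma=i^*\alpha/e(i)$, so that $\alpha=i_*(\gamma)$. Your $R$-linearity and flatness checks are the bookkeeping the paper leaves implicit when it uses these maps and identities after inverting $S$.
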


\begin{proof}
Concentration implies that $\alpha$ admits a \emph{unique} supported refinement after inverting $S$.
Thom isomorphism writes that supported refinement uniquely as $\Th_i(A)(\gamma)$ for some $\gamma$.
Applying $i^*$ and \ref{thm:self-int} gives $i^*\alpha=\gamma\smile e(i)$, hence $\gamma=i^*\alpha/e(i)$ by invertibility.
Finally $\alpha=i_*(\gamma)=i_*(i^*\alpha/e(i))$.
\end{proof}

\section{Equivariant cohomology and the ABBV formula}\label{sec:borel}

\subsection{Multiplicative set and orbit-type annihilation}

Let $T$ be a compact torus acting smoothly on a compact manifold $X$.
Let $Z=X^T$ and $U=X\setminus Z$.
Fix a field $k$ of characteristic $0$ and write
$$
H_T^*(X;k)\ :=\ H^*(ET\times_T X;\,k).
$$
Set
$
R\ :=\ H_T^*(\pt;k)\ =\ H^*(BT;k)\ \cong\ \mathrm{Sym}(\mathfrak t^\vee)\otimes k,
$
where $\mathfrak t^\vee$ is placed in cohomological degree $2$.

\begin{definition} \label{def:abbvS}
Let $S\subset R$ be the multiplicative set generated by all nonzero linear forms
$\ell\in\mathfrak t^\vee\subset R^2$.
\end{definition}

\begin{lemma} \label{lem:kill-isotropy}
If $H\subsetneq T$ is a proper closed subgroup, then
$
H_T^*(T/H;k)[S^{-1}]=0.
$
\end{lemma}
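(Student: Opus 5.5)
The plan is to compute \(H_T^*(T/H;k)\) directly and exhibit a nonzero linear form in \(S\) that acts by zero on it.

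First, since \(T\) acts freely on \(ET\times T/H\), there is a homotopy equivalence
\[
ET\times_T T/H\simeq ET/H\simeq BH .
\]
This gives \(H_T^*(T/H;k)\cong H^*(BH;k)\) as \(R\)-modules. Here the \(R\)-module structure comes from restriction along \(H\hookrightarrow T\), namely via \(\mathrm{res}:H^*(BT;k)\to H^*(BH;k)\). The action is compatible with the one in Lemma~\ref{lem:R-action}, because the structure map \(T/H\to\pt\) induces \(BH\to BT\).

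Second, I will identify \(H^*(BH;k)\) with \(k\) in characteristic \(0\). A closed subgroup \(H\) of a torus is \(H\cong H^0\times F\), with \(H^0\) a torus and \(F\) finite. Rationally, \(H^*(BF;k)=k\), so \(H^*(BH;k)\cong H^*(BH^0;k)\cong\Sym(\mathfrak h^\vee)\otimes k\). More invariantly, \(H^*(BH;k)\cong H^*(BH^0;k)^{\pi_0H}\), and this equals \(H^*(BH^0;k)\) since \(\pi_0 H\) acts trivially, \(H\) being abelian and conjugation being trivial. Under this identification the restriction map becomes the map \(\Sym(\mathfrak t^\vee)\otimes k\to\Sym(\mathfrak h^\vee)\otimes k\) induced by the linear surjection \(\mathfrak t^\vee\to\mathfrak h^\vee\) dual to \(\mathfrak h\subset\mathfrak t\).

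Third, since \(H\subsetneq T\) is a proper closed subgroup, \(\dim H^0<\dim T\). This needs a short check: if \(\dim H=\dim T\), then \(H^0=T\) because \(T\) is connected, so \(H=T\). Hence \(\mathfrak h\subsetneq\mathfrak t\), and there exists a nonzero \(\ell\in\mathfrak t^\vee\), with \(k\)-coefficients, vanishing on \(\mathfrak h\). Such an \(\ell\) is a generator of \(S\) from Definition~\ref{def:abbvS}, and it maps to \(0\) in \(H^2(BH;k)\). Therefore \(\ell\) acts by zero on the module \(M=H_T^*(T/H;k)\), which is a ring receiving a ring map from \(R\). Inverting \(\ell\) then kills everything: for \(m\in M\) we have \(m/1=\ell m/\ell=0\), so \(M[S^{-1}]=0\).

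The main obstacle is the second step, justifying that the \(R\)-action on \(H^*(BH;k)\) factors through \(\Sym(\mathfrak h^\vee)\). This is exactly where the characteristic-\(0\) hypothesis removes the torsion contribution of the finite component group \(F\); the rest is linear algebra. If one prefers to avoid \(\pi_0H\), it suffices to use the ring map \(R\to H^*(BH;k)\) together with the fact that \(\ell\) restricts to zero already on \(BH^0\), and then argue by transfer for the finite cover \(BH^0\to BH\), which is invertible rationally.
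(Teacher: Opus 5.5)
Your proposal is correct and follows the paper's proof essentially step for step: you identify $H_T^*(T/H;k)$ with $H^*(BH;k)\cong\Sym(\mathfrak h^\vee)\otimes k$, using characteristic $0$ to discard the finite component group, and then pick a nonzero $\ell\in\mathfrak t^\vee$ vanishing on $\mathfrak h$, which lies in $S$ and acts by zero, so the localisation vanishes. Your extra check that $H\subsetneq T$ forces $\mathfrak h\subsetneq\mathfrak t$ (via connectedness of $T$) fills in a step the paper leaves implicit; the only flaw is a wording slip where you say you will identify $H^*(BH;k)$ with $k$, when you mean with $\Sym(\mathfrak h^\vee)\otimes k$.
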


\begin{proof}
There is a canonical identification
$$
ET\times_T (T/H)\ \simeq\ EH/H\ \simeq\ BH,
$$
hence $H_T^*(T/H;k)\cong H^*(BH;k)$.
Since $k$ has characteristic $0$, the finite component group of $H$ contributes no positive-degree cohomology, and
$$
H^*(BH;k)\ \cong\ H^*(B(H^\circ);k)\ \cong\ \mathrm{Sym}((\mathfrak h)^\vee)\otimes k,
$$
where $\mathfrak h=\mathrm{Lie}(H^\circ)\subsetneq \mathfrak t$.
The restriction map $R\to H^*(BH;k)$ is induced by $\mathfrak t\to\mathfrak h$, so its kernel is the ideal
$I_H\subset R$ of polynomials vanishing on $\mathfrak h$.
Because $\mathfrak h\subsetneq\mathfrak t$, there exists a nonzero $\ell\in\mathfrak t^\vee$ with $\ell|_{\mathfrak h}=0$.
Thus $\ell\in I_H\cap S$, hence $(R/I_H)[S^{-1}]=0$, i.e.\ $H_T^*(T/H;k)[S^{-1}]=0$.
\end{proof}

\subsection{Borel concentration}

The following theorem is due to Illman~\cite{Illman}.
\begin{theorem}\label{thm:illman-cw}
A smooth proper action of a compact Lie group on a compact smooth manifold admits a finite $G$-CW structure
compatible with orbit types; the fixed locus is a subcomplex.
\end{theorem}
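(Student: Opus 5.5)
Illman's theorem is cited rather than proved in the paper, so the plan reconstructs the classical argument. It goes in three stages: \emph{local linear models}, a \emph{stratification by orbit type}, and \emph{equivariant triangulation of the orbit space}.

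\textbf{Local models.} Since $G$ is compact, averaging gives a $G$-invariant Riemannian metric on $X$. The slice theorem (exponential map on the normal bundle of an orbit) then says that each point $x$ with isotropy $H=G_x$ has an invariant neighbourhood equivariantly diffeomorphic to $G\times_H D(V_x)$, where $V_x$ is the normal slice representation of $H$. Consequences:
\begin{itemize}
\item there are only finitely many orbit types $(H)$, since $X$ is compact and, inductively on dimension, each linear slice representation has finitely many types;
\item each orbit-type stratum $X_{(H)}$ is a smooth submanifold, and the stratification is locally finite and satisfies the frontier condition;
\item the fixed locus $X^G=X_{(G)}$ is a closed submanifold, the union of strata whose slice contains no nontrivial orbit.
\end{itemize}

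\textbf{Triangulating the orbit space.} The orbit space $X/G$ inherits a stratification by the images of the $X_{(H)}$. Each stratum $X_{(H)}/G$ is a smooth manifold, and the local model makes $X/G$ locally a cone, namely $D(V_x)/H$. I would triangulate $X/G$ compatibly with this stratification, by one of two routes:
\begin{itemize}
\item Whitney stratification theory: the orbit-type strata satisfy condition (b) by the Lie-group slice structure, and one can invoke the triangulation of Whitney-stratified sets (Goresky, Johnson); or
\item Illman's own inductive argument over orbit types, ordered by decreasing isotropy.
\end{itemize}
Refining this triangulation barycentrically makes the isotropy type constant on each open simplex.

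\textbf{Lifting to a $G$-CW structure.} Over each open simplex $\sigma$ one needs a trivialization $\pi^{-1}(\sigma)\cong G/H\times\sigma$. I would obtain it from a section of $\pi$ over $\bar\sigma$; sections exist locally because of the slice structure, and contractibility of simplices allows them to be patched. This gives equivariant cells $G/H\times\Delta^n$ whose attaching maps come from the simplicial boundary. The structure is finite because $X/G$ is compact. The fixed locus is a subcomplex because it is the preimage of a closed union of simplices, namely the closed stratum $X^G/G=X^G$.

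\textbf{Main obstacle.} The delicate step is making the lifts of simplices coherent on boundaries, so that the cells are genuinely $G/H\times\Delta^n$ with the correct faces as isotropy jumps up along faces. I would handle this by induction on skeleta and orbit type: first construct sections over the deepest strata, which have maximal isotropy, then extend outward using the linear slice model. This is Illman's induction, and in the paper's setting I would ultimately cite it rather than redo it.
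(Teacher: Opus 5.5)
The paper gives no proof of this statement but quotes it from Illman. Your outline is the standard route: the slice theorem, a triangulation of $X/G$ compatible with the orbit-type strata, and lifting each closed simplex by a section with constant isotropy $H$ on its interior to get cells $G/H\times\Delta^n$, with $X^G$ the preimage of a subcomplex. Like the paper, it rests on Illman for the delicate lifting step, so it is consistent with the paper's treatment.

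One point needs care: the claim that ``sections exist locally because of the slice structure'' is false over neighbourhoods of singular orbits. For the diagonal $S^1$-action on $\mathbb C^2$, a section of $\mathbb C^2\to\mathbb C^2/S^1\cong\mathbb R^3$ over a neighbourhood of the cone point would restrict to a section of the Hopf fibration, which does not exist. So the existence of sections over closed simplices is part of what you are citing from Illman, not a formal consequence of slices plus contractibility.
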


\begin{theorem} \label{thm:borel-conc}
With $S$ as in \ref{def:abbvS}, restriction to fixed points becomes an isomorphism after localisation:
$$
H_T^*(X;k)[S^{-1}] \xrightarrow{\sim} H_T^*(Z;k)[S^{-1}].
$$
Equivalently,
$
H_T^*(U;k)[S^{-1}]=0.
$
\end{theorem}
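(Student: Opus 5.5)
The plan is to prove the second formulation, $H_T^*(U;k)[S^{-1}]=0$, and then deduce the first from the localisation long exact sequence. By Theorem~\ref{thm:illman-cw}, $X$ admits a finite $T$-CW structure compatible with orbit types, with $Z=X^T$ a subcomplex. Since $U=X\setminus Z$ is open, I would not work with $U$ directly as a complex. Instead I would use a closed $T$-invariant tubular neighbourhood $N$ of $Z$ and replace $U$ by the compact $T$-manifold with boundary $X\setminus\operatorname{int}N$. This manifold is a $T$-equivariant deformation retract of $U$, it has no fixed points, and it again carries a finite $T$-CW structure by Illman. Every cell in it then has the form $T/H\times D^n$ with $H\subsetneq T$.

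The induction runs over the finitely many skeleta, or cells. For the relative pair $(Y_{n},Y_{n-1})$, excision gives
\[
H_T^*(Y_n,Y_{n-1};k)\cong\bigoplus_{\text{cells}} H_T^{*-n}(T/H_\sigma;k).
\]
Localisation $-\otimes_R S^{-1}R$ is exact, and Lemma~\ref{lem:kill-isotropy} kills each summand. So the long exact sequence of the pair, together with induction, gives $H_T^*(Y_n;k)[S^{-1}]=0$ for all $n$. The base case is $Y_0$, a finite disjoint union of orbits $T/H$ with $H$ proper, which is again covered by Lemma~\ref{lem:kill-isotropy}. Since there are finitely many cells, the induction terminates and yields $H_T^*(U;k)[S^{-1}]=0$.

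For the equivalence with the first statement, I would use the same tubular neighbourhood. Restriction $H_T^*(X)\to H_T^*(N)\cong H_T^*(Z)$ sits in the Mayer--Vietoris sequence for the cover $X=N\cup(X\setminus\operatorname{int}N)$, whose intersection is the sphere bundle $\partial N$. Both $\partial N$ and $X\setminus \operatorname{int}N$ are fixed-point-free compact $T$-manifolds, so by the previous paragraph their cohomology dies after inverting $S$. Exactness of localisation then shows that $H_T^*(X)[S^{-1}]\to H_T^*(Z)[S^{-1}]$ is an isomorphism. Alternatively, one can invoke Definition~\ref{def:conc} together with the Thom isomorphism $H_{T,Z}^*(X)\cong H_T^{*-2c}(Z)$ componentwise; but the Mayer--Vietoris route avoids needing invertibility of Euler classes, so I would use it.

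I expect the main obstacle to be the reduction from the open, noncompact $U$ to a compact fixed-point-free $T$-complex with finitely many orbit types. This step needs the existence of an invariant tubular neighbourhood, which is standard for compact group actions after averaging a metric, and Illman's theorem applied to a manifold with boundary. A secondary point is checking that the $R$-module structures are compatible, so that $S^{-1}$ commutes with the connecting maps. This holds because all maps are $H^*(BT)$-linear, being induced by maps over $BT$.
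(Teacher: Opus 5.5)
Your proof is correct, but it is organised differently from the paper's.

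\emph{The paper's route.} It never works with $U$ itself. It takes Illman's relative $T$-CW filtration $Z=X_0\subset X_1\subset\cdots\subset X_N=X$ of the pair $(X,Z)$ and notes that every relative cell $T/H\times(D^n,S^{n-1})$ has $H\subsetneq T$. It then kills each $H_T^*(X_r,X_{r-1};k)[S^{-1}]$ by Lemma~\ref{lem:kill-isotropy} and concludes $H_T^*(X,Z;k)[S^{-1}]=0$. That gives the restriction isomorphism directly. The vanishing on $U$ is then simply asserted to follow from the localisation sequence.

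\emph{Your route.} You prove the $U$-statement first, on the compact fixed-point-free model $X\setminus\operatorname{int}N$. You then recover the restriction isomorphism by Mayer--Vietoris across $\partial N$.

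\emph{Comparison.} The paper's argument is shorter. It needs only a relative CW structure on $(X,Z)$, with no tubular neighbourhood and no appeal to Illman for a manifold with boundary. That last point is harmless anyway: you can double $X\setminus\operatorname{int}N$ along $\partial N$ and apply Mayer--Vietoris to the closed fixed-point-free double.

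What your route buys is a genuine proof of both formulations. The group the paper kills, $H_T^*(X,Z;k)$, is the cohomology attached to the triangle $j_!j^*\to\id\to i_*i^*$. By contrast, $H_T^*(U;k)$ sits in the triangle $i_*i^!\to\id\to j_*j^*$ of Theorem~\ref{thm:les}. So passing from the restriction isomorphism to $H_T^*(U;k)[S^{-1}]=0$ is not a formal consequence of either sequence alone. It needs extra input. One option is the Thom isomorphism together with invertibility of the Euler class (Lemma~\ref{lem:euler-invert}), which makes $\forg$ an isomorphism after localisation. The other is a direct argument on $U$, such as yours.
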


\begin{proof}
Choose an Illman finite $T$-CW filtration of the pair $(X,Z)$:
$$
Z=X_0\subset X_1\subset \cdots \subset X_N=X,
$$
where each $(X_r,X_{r-1})$ is a finite disjoint union of equivariant cells of the form
$T/H\times (D^n,S^{n-1})$ with $H\subsetneq T$ whenever the cell lies in $U$.
For each such cell, excision and homotopy invariance identify the relative equivariant cohomology with a suspension of
$H_T^*(T/H;k)$:
$$
H_T^*(T/H\times D^n,\ T/H\times S^{n-1};k)\ \cong\ \widetilde H_T^{*-n}(T/H_+;k)
\ \cong\ H_T^{*-n}(T/H;k).
$$
By \ref{lem:kill-isotropy}, these groups vanish after inverting $S$.
The long exact sequences of the pairs $(X_r,X_{r-1})$ therefore show inductively that
$H_T^*(X,Z;k)[S^{-1}]=0$, hence restriction
$$H_T^*(X;k)[S^{-1}]\to H_T^*(Z;k)[S^{-1}]$$ is an isomorphism.
Since $U=X\setminus Z$, the equivalent statement $H_T^*(U;k)[S^{-1}]=0$ follows from the localisation triangle / LES.
\end{proof}

\subsection{Invertibility of Euler classes and ABBV}

\begin{lemma}\label{lem:unit-nilp}
If $A$ is a ring, $u\in A^\times$, and $n\in A$ is nilpotent, then $u(1+n)$ is invertible.
\end{lemma}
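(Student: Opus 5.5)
The plan is to write down an explicit inverse: since $n$ is nilpotent, the geometric series for $(1+n)^{-1}$ terminates. Choose $N\ge 1$ with $n^N=0$ and set
\[
v:=\sum_{k=0}^{N-1}(-n)^k\in A .
\]
The telescoping identity
\[
(1+n)\,v=v\,(1+n)=1-(-n)^N=1-(-1)^Nn^N=1
\]
shows that $1+n$ is a two-sided unit with inverse $v$. This computation uses only that $n$ commutes with itself and with $1$, so it holds whether or not $A$ is commutative.

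Next, a product of units is a unit. Indeed, $u^{-1}$ exists by hypothesis, and one checks directly that
\[
\bigl(u(1+n)\bigr)\bigl(v\,u^{-1}\bigr)=u\,(1+n)v\,u^{-1}=uu^{-1}=1,
\]
and similarly
\[
\bigl(v\,u^{-1}\bigr)\bigl(u(1+n)\bigr)=v(1+n)=1 .
\]
Hence $u(1+n)$ is invertible with inverse $v\,u^{-1}$. No commutativity between $u$ and $n$ is needed, since the inverse is simply taken in the reversed order.

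There is no real obstacle here. The only thing to watch is the context in which the lemma will be applied: $A$ will be a graded ring such as $H_T^*(Z;k)[S^{-1}]$, and the element $n$ will be a sum of positive-degree classes. In that setting the truncated geometric series for $v$ is a finite sum of homogeneous elements, so it lies in the ring itself and no completion is required. I would add a one-line remark to that effect, anticipating the use of the lemma for Euler classes of the form $e=\prod\ell_i\,(1+\text{nilpotent})$.
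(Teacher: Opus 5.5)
Your proof is correct and is essentially the paper's: the paper just exhibits the terminating geometric series $\sum_{m=0}^{N-1}(-n)^m$ as $(1+n)^{-1}$, and leaves implicit the product-of-units step that you spell out with the inverse $v\,u^{-1}$. A small aside on your closing remark, which does not affect the lemma or your argument: in the application in Lemma~\ref{lem:euler-invert}, $n$ is not literally a sum of positive-degree classes, since it involves Chern roots divided by the weights $\chi_j$ and so has total degree $0$ in $H_T^*(F;k)[S^{-1}]$; it is nilpotent because it lies in the ideal generated by $H^{>0}(F;k)$.
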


\begin{proof}
If $n^N=0$, then $(1+n)^{-1}=\sum_{m=0}^{N-1}(-n)^m$.
\end{proof}

\begin{lemma}\label{lem:euler-invert}
Let $F$ be a connected component of $Z=X^T$ and let $N_{F/X}$ be the $T$-equivariant normal bundle.
Then $e_T(N_{F/X})$ becomes invertible in $H_T^*(F;k)[S^{-1}]$.
\end{lemma}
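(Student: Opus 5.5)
The plan is the classical argument: decompose the normal bundle into isotypic pieces under the torus and apply the splitting principle, so that $e_T(N_{F/X})$ becomes a unit in $R[S^{-1}]$ times $(1+\text{nilpotent})$. Lemma~\ref{lem:unit-nilp} then finishes.

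\emph{Step 1: weight decomposition.} Since $T$ acts trivially on the connected component $F$, each fibre of $N=N_{F/X}$ is a $T$-representation. Because $F$ is connected, the isomorphism type of this representation is locally constant, hence constant. So $N$ splits $T$-equivariantly as
\[
N=\bigoplus_{\chi} N_\chi
\]
over finitely many characters $\chi\in\Hom(T,S^1)$, where $T$ acts on $N_\chi$ by $\chi$. No weight is trivial: the fixed subspace of $T_xX$ at $x\in F$ is $T_xF$, by the linearisation/slice theorem for compact group actions. Using the orientation coming from the complex structures on the weight spaces, each $N_\chi$ is a complex vector bundle of rank $r_\chi$, and $e_T(N)=\prod_\chi e_T(N_\chi)$.

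\emph{Step 2: Euler class of one weight piece.} Since $T$ acts trivially on $F$, we have $ET\times_T F=BT\times F$, hence
\[
H_T^*(F;k)\cong R\otimes_k H^*(F;k)
\]
by Künneth, which holds since $F$ is a compact manifold. Under this identification the equivariant bundle $ET\times_T N_\chi$ is $L_\chi\boxtimes N_\chi$, where $L_\chi$ is the line bundle on $BT$ attached to $\chi$, with $c_1(L_\chi)=\ell_\chi:=d\chi\in\mathfrak t^\vee$, which is nonzero. By the splitting principle, with formal Chern roots $x_1,\dots,x_{r_\chi}$ of $N_\chi$ (all nilpotent in $H^*(F;k)$), we get
\[
e_T(N_\chi)=c_{r_\chi}(L_\chi\otimes N_\chi)=\prod_m(\ell_\chi+x_m)
=\sum_{m=0}^{r_\chi}\ell_\chi^{\,r_\chi-m}c_m(N_\chi)
=\ell_\chi^{\,r_\chi}\bigl(1+n_\chi\bigr).
\]
Here $n_\chi=\sum_{m\ge1}\ell_\chi^{-m}c_m(N_\chi)$, which lies in $R[S^{-1}]\otimes H^{>0}(F;k)$ after localisation.

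\emph{Step 3: conclude.} In $H_T^*(F;k)[S^{-1}]=R[S^{-1}]\otimes H^*(F;k)$, the element $\ell_\chi^{r_\chi}$ is a unit because $\ell_\chi\in S$. The element $n_\chi$ is nilpotent, since $H^{>0}(F;k)$ vanishes above degree $\dim F$. Lemma~\ref{lem:unit-nilp} therefore makes each $e_T(N_\chi)$ invertible, and hence so is their product $e_T(N_{F/X})$.

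The main obstacle is Step 1 together with the orientation/sign bookkeeping. First, one must justify that the weights are nonzero and constant on $F$. Second, when $X$ is not given a complex structure, one must pair the real weight spaces $\pm\chi$ into complex bundles; the resulting sign changes of $\ell_\chi$ are harmless, since $-\ell_\chi\in S$. I would handle this first via the slice theorem at a point of $F$ and local constancy of characters.
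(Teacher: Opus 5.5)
Your proposal is correct and follows essentially the same route as the paper: identify $H_T^*(F;k)\cong H^*(F;k)\otimes_k R$, use the splitting principle to write $e_T(N_{F/X})$ as a product of nonzero weights times $1+n$ with $n$ nilpotent, and conclude with Lemma~\ref{lem:unit-nilp}. Your explicit isotypic decomposition, the pairing of the real weights $\pm\chi$ into complex bundles, and the sign bookkeeping are more careful than the paper's brief appeal to the ``complexified'' normal bundle, whose top Chern class is $\pm e_T(N_{F/X})^2$ rather than $e_T(N_{F/X})$.
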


\begin{proof}
There is a canonical identification
$$
H_T^*(F;k)\ \cong\ H^*(F;k)\otimes_k R.
$$
Because $F$ is compact, every element of positive cohomological degree in $H^*(F;k)$ is nilpotent, hence the ideal
$H^{>0}(F;k)\otimes_k R \subset H_T^*(F;k)$ is nilpotent.

Over the Borel space $ET\times_T F$, apply the splitting principle to the (complexified) $T$-equivariant bundle $N_{F/X}$.
Since $F$ is fixed, $T$ acts trivially on $TF$, so the normal representation has \emph{no trivial weights}.
Thus, after pullback to a suitable space, $N_{F/X}$ splits as a direct sum of $T$-equivariant complex line bundles
$L_{\chi_j}$ with nonzero characters $\chi_j\in\mathfrak t^\vee\setminus\{0\}$.
Then
$$
e_T(N_{F/X})\ =\ \prod_j c_1^T(L_{\chi_j})
\ =\ \Big(\prod_j \chi_j\Big)\cdot (1+n),
$$
where $n$ lies in the nilpotent ideal generated by $H^{>0}(F;k)$.

After inverting $S$, each nonzero $\chi_j$ is a unit, so $\prod_j\chi_j\in H_T^*(F;k)[S^{-1}]^\times$.
Now apply \ref{lem:unit-nilp}.
\end{proof}

\begin{corollary}\label{cor:abbv}
For $\alpha\in H_T^*(X;k)[S^{-1}]$ one has
$$
\alpha\ =\ \sum_{F\subset X^T} i_{F*}\!\left(\frac{i_F^*\alpha}{e_T(N_{F/X})}\right)
\qquad\text{in}\qquad
H_T^*(X;k)[S^{-1}],
$$
where the sum runs over connected components $F$ of $X^T$.
\end{corollary}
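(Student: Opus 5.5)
The plan is to reduce the formula to Theorem~\ref{thm:universal-euler}, applied to the closed immersion $i:Z=X^T\hookrightarrow X$ in the Borel model $\C(X)=D^b_T(X;k)$ with $A=k_X$ and multiplicative set $S$ from Definition~\ref{def:abbvS}, and then split the resulting single term over the connected components of $Z$. Since $X$ is compact, $Z$ is a closed submanifold with finitely many components $F$. Hence $Z=\coprod_F F$ is a finite disjoint union and
\[
H^*_T(Z;k)\cong\bigoplus_F H^*_T(F;k).
\]

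The hypotheses of Theorem~\ref{thm:universal-euler} are checked in order. Concentration after inverting $S$ is Theorem~\ref{thm:borel-conc}, namely $H_T^*(U;k)[S^{-1}]=0$. Purity and the Thom isomorphism for $i$ come from the equivariant tubular neighbourhood theorem. The normal bundle $N_{F/X}$ carries a complex structure because its weights are nonzero (Lemma~\ref{lem:euler-invert}), so it is oriented. The equivariant Thom isomorphism for oriented vector bundles then identifies $H^d_{T,Z}(X;k)\cong\bigoplus_F H^{d-\operatorname{rk}_{\mathbb R}N_F}_T(F;k)$, and this survives localisation since localisation is exact. The codimension varies with $F$, so I would apply the purity formalism of Definition~\ref{def:purity-axiom} componentwise, using excision (Proposition~\ref{prop:Loc-excision}) to pass to disjoint neighbourhoods $V_F$ of each $F$. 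The Euler class $e(i_F)$ is the equivariant Euler class $e_T(N_{F/X})$, by the self-intersection formula Theorem~\ref{thm:self-int} and the standard identification of $i^*i_*1$ with the Euler class of the normal bundle. Invertibility of $e_T(N_{F/X})$ in $H^*_T(F;k)[S^{-1}]$ is Lemma~\ref{lem:euler-invert}, so $e(i)=(e_T(N_{F/X}))_F$ is invertible componentwise.

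Theorem~\ref{thm:universal-euler} then gives $\alpha=i_*(i^*\alpha/e(i))$. It remains to decompose this term. Using the splitting $i_*i^!\cong\bigoplus_F i_{F*}i_F^!$ from the proof of Theorem~\ref{thm:global=local}, we have $i^*\alpha=(i_F^*\alpha)_F$ and division by $e(i)$ is componentwise. The Gysin map restricted to the $F$-summand is $i_{F*}$, so the expression becomes $\sum_F i_{F*}\bigl(i_F^*\alpha/e_T(N_{F/X})\bigr)$.

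The main obstacle is the second step: matching the abstract data of Definitions~\ref{def:purity-axiom} and~\ref{def:thom-euler} with the geometric equivariant Thom class and Euler class, in the presence of mixed codimensions. The most direct route I would try first is to work one component at a time on $V_F\simeq D(N_{F/X})$, where the classical Thom isomorphism and the equality $i^*\Th=e_T(N)$ are standard (see \cite{BottTu}, applied to Borel constructions). These identifications would then be glued using the excision and disjoint-union compatibilities already established.
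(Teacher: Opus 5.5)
Your proposal is correct and follows the same route as the paper, whose entire proof is to apply Theorem~\ref{thm:universal-euler} in the Borel model, with concentration from Theorem~\ref{thm:borel-conc} and Euler invertibility from Lemma~\ref{lem:euler-invert}. What you add are the steps that one-line proof leaves implicit: the componentwise purity and Thom data (needed because the codimension of $F$ varies while Definition~\ref{def:purity-axiom} fixes a single $c$), the identification $e(i_F)=e_T(N_{F/X})$, and the splitting of $i_*\bigl(i^*\alpha/e(i)\bigr)$ into the sum over components via $i_*i^!\cong\bigoplus_F i_{F*}i_F^!$.
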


\begin{proof}
Apply \ref{thm:universal-euler} in the Borel model, using concentration \ref{thm:borel-conc}
and invertibility \ref{lem:euler-invert}.
\end{proof}

\section{Multiplicative analogue: equivariant \texorpdfstring{$K$}{K}-theory}\label{sec:K}

\begin{remark}
The Hom-based cohomology groups adopted in Section~2 do not literally recover equivariant algebraic $K$-theory. Indeed, if one works in $\Perf^T(X)$, then the unit object is $\OO_X$ and one has
\begin{equation*}
\End_{\Perf^T(X)}(\OO_X)\cong H^0(X,\OO_X),
\end{equation*}
rather than a Grothendieck group. Accordingly, the aim of the present section is not to treat equivariant $K$-theory as a literal realisation of the abstract setup, but to isolate the multiplicative denominator $\lambda_{-1}(N^\vee)$ and to record its precise formal analogy with the Euler-denominator construction developed above.
\end{remark}

\begin{proposition} \label{prop:K-euler}
Let $i:Z\hookrightarrow X$ be a regular immersion of codimension $c$ of $T$-schemes, with conormal bundle
$N_{Z/X}^\vee$.
Then in $K_0^T(Z)$ one has
$$
i^*i_*(1_Z)\ =\ \lambda_{-1}(N_{Z/X}^\vee)
\ :=\ \sum_{k=0}^c (-1)^k[\Lambda^k N_{Z/X}^\vee].
$$
\end{proposition}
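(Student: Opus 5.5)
The proof will be the standard Koszul resolution computation, organised in three steps: a local resolution, the derived self-intersection, and equivariance.

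First, since \(i\) is a regular immersion of codimension \(c\), the ideal sheaf \(\mathcal I\subset\OO_X\) of \(Z\) is locally generated by a regular sequence of length \(c\), and \(\mathcal I/\mathcal I^2\simeq N_{Z/X}^\vee\) is a locally free \(\OO_Z\)-module of rank \(c\). The class \(i_*(1_Z)=[i_*\OO_Z]\in K_0^T(X)\) is therefore represented by a perfect complex. The quantity to compute is the derived pullback
\[
i^*i_*(1_Z)=[Li^*i_*\OO_Z]=\sum_k(-1)^k\,[\mathcal{T}or_k^{\OO_X}(\OO_Z,\OO_Z)]\in K_0^T(Z),
\]
where the sum is finite because \(i_*\OO_Z\) has Tor-amplitude \([-c,0]\).

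Second, I will compute these Tor sheaves locally. On an open set where \(\mathcal I=(f_1,\dots,f_c)\), the Koszul complex \(K(f_1,\dots,f_c)\) is a free resolution of \(\OO_Z\). Tensoring it with \(\OO_Z\) kills all differentials, since the \(f_i\) vanish on \(Z\), which gives
\[
\mathcal{T}or_k(\OO_Z,\OO_Z)\simeq\Lambda^k(\mathcal I/\mathcal I^2).
\]
The key point is that this isomorphism is intrinsic: the map \(\mathcal I/\mathcal I^2\to\mathcal{T}or_1(\OO_Z,\OO_Z)\), obtained from the boundary of \(0\to\mathcal I\to\OO_X\to\OO_Z\to0\), is canonical. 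Together with the graded-commutative algebra structure on \(\mathcal{T}or_*\), it induces a canonical map \(\Lambda^*(\mathcal I/\mathcal I^2)\to\mathcal{T}or_*(\OO_Z,\OO_Z)\). This map is an isomorphism because it is one locally, by the Koszul computation. The local isomorphisms therefore glue, and one gets \(\mathcal{T}or_k\simeq\Lambda^kN^\vee_{Z/X}\) globally.

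Third, equivariance: \(T\) preserves \(Z\), so \(\mathcal I\) is a \(T\)-equivariant ideal, and the canonical map above is \(T\)-equivariant by functoriality. Hence the identification \(\mathcal{T}or_k\simeq\Lambda^kN^\vee\) holds in \(T\)-equivariant coherent sheaves on \(Z\). Taking the alternating sum in \(K_0^T(Z)\), which is legitimate because \(Li^*\) is computed by a bounded complex whose cohomology sheaves are locally free, gives
\[
i^*i_*(1_Z)=\sum_{k=0}^c(-1)^k[\Lambda^kN_{Z/X}^\vee]=\lambda_{-1}(N_{Z/X}^\vee).
\]

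I expect the main obstacle to be the equivariant globalisation, because the Koszul resolution exists only locally and need not be \(T\)-equivariant. The plan avoids this by never globalising the resolution. Only the intrinsic Tor sheaves, with their canonical equivariant identification, are needed, together with the fact that \([Li^*F]=\sum_k(-1)^k[\mathcal H^{-k}(Li^*F)]\) in \(K_0^T(Z)\). This identity holds for any bounded complex with locally free cohomology, and more generally under the standing finiteness hypotheses for Thomason's equivariant \(K\)-theory of perfect complexes.
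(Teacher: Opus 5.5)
Your proposal is correct and follows the same route as the paper. Both identify $i^*i_*(1_Z)$ with the class of $\OO_Z\otimes^{\mathbf L}_{\OO_X}\OO_Z$, expand it as $\sum_k(-1)^k[\mathcal{T}or_k^{\OO_X}(\OO_Z,\OO_Z)]$, and use the Koszul identification $\mathcal{T}or_k\cong\Lambda^kN^\vee_{Z/X}$ (zero for $k>c$); the paper cites this last identification as standard, while you also explain why it is canonical, hence glues and is $T$-equivariant, and why the alternating-sum formula holds in $K_0^T(Z)$.
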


\begin{proof}
In equivariant $K$-theory, $i_*(1_Z)=[\OO_Z]\in K_0^T(X)$.
Pulling back, $i^*i_*(1_Z)$ is the class of the derived tensor product
$\OO_Z\otimes_{\OO_X}^{\mathbf L}\OO_Z$ in $K_0^T(Z)$, hence
$$
i^*i_*(1_Z)\ =\ \sum_{k\ge 0} (-1)^k[\mathrm{Tor}_k^{\OO_X}(\OO_Z,\OO_Z)].
$$
For a regular immersion, the standard identification of Tor-sheaves gives
$$
\mathrm{Tor}_k^{\OO_X}(\OO_Z,\OO_Z)\ \cong\ \Lambda^k N_{Z/X}^\vee,
\qquad 0\le k\le c,
$$
and $\mathrm{Tor}_k=0$ for $k>c$. Substituting yields the formula.
\end{proof}

The following theorem is due to Thomason~\cite{Thomason}.
\begin{theorem}\label{thm:thomason-localisation}
For an algebraic torus $T$ acting on a quasi-projective scheme, restriction to fixed points induces an isomorphism in
equivariant $K$-theory after localising the representation ring $R(T)$ at the multiplicative set generated by
$1-\chi$ for nontrivial characters $\chi$.
Under this localisation, the classes $\lambda_{-1}(N^\vee)$ for fixed components become invertible, yielding Thomason's
localisation and Lefschetz--Riemann--Roch formulas.
\end{theorem}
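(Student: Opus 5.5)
The plan follows Thomason's original strategy: a concentration theorem in \(K\)-theory, followed by the multiplicative self-intersection formula of Proposition~\ref{prop:K-euler}. This is the same shape as the Borel-model argument of Section~\ref{sec:borel}.

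Let \(S\subset R(T)\) be generated by the elements \(1-\chi\) for nontrivial characters \(\chi\), let \(Z=X^T\) and let \(U=X\setminus Z\). The first step is the localisation sequence in equivariant \(G\)-theory (Quillen--Thomason dévissage),
\[
G^T(Z)\to G^T(X)\to G^T(U),
\]
which is the multiplicative counterpart of Theorem~\ref{thm:les}. Concentration will then follow once we show \(G^T_*(U)[S^{-1}]=0\).

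For that vanishing, I would use Thomason's generic slice theorem. It gives a \(T\)-stable stratification of \(U\) by locally closed pieces of the form \(T/H\times V\), where \(H\subsetneq T\) is a proper diagonalisable subgroup and \(T\) acts only through the first factor up to a torsor. On each such piece, \(G^T_*\) is a module over \(R(H)=R(T)/I_H\). Choosing a nontrivial character \(\chi\) that is trivial on \(H\) gives \(1-\chi\in I_H\cap S\), so the module dies after inverting \(S\). This is the analogue of Lemma~\ref{lem:kill-isotropy}, and an induction over the strata via the localisation sequence gives \(G^T(U)[S^{-1}]=0\).

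The main obstacle is the slice theorem itself, which needs the quasi-projectivity hypothesis and the noetherian induction. I would cite Thomason for it rather than reprove it. For singular \(X\) we only obtain a \(G\)-theory statement; in the regular case \(G=K\).

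Concentration makes \(i_*:K^T(Z)[S^{-1}]\to K^T(X)[S^{-1}]\) an isomorphism. Next, for a fixed component \(F\), \(T\) acts trivially on \(F\), so
\[
K^T_0(F)\cong K_0(F)\otimes R(T).
\]
The conormal bundle splits by weight into isotypic pieces \(N_\chi^\vee\), all with \(\chi\neq 1\). Using the splitting principle, \(\lambda_{-1}(N^\vee)\) is a product of factors \((1-\chi L)\), with \(L\) line bundles whose classes are unipotent in \(K_0(F)\). Each factor equals \((1-\chi)(1+n)\) with \(n\) nilpotent, since \(K_0(F)\) has a nilpotent augmentation ideal by the \(\gamma\)-filtration. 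Lemma~\ref{lem:unit-nilp} then shows these factors, and hence \(\lambda_{-1}(N^\vee)\), are invertible after inverting \(S\).

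Finally, Proposition~\ref{prop:K-euler} gives \(i^*i_*=\lambda_{-1}(N^\vee)\cdot(-)\) in the smooth case. Combining this with bijectivity of \(i_*\), the argument of Theorem~\ref{thm:universal-euler} yields
\[
\alpha=\sum_F i_{F*}\bigl(i_F^*\alpha/\lambda_{-1}(N_F^\vee)\bigr).
\]
Pushing this forward to a point gives the Lefschetz--Riemann--Roch formula.
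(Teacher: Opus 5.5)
The paper gives no proof of this statement. It is quoted from Thomason~\cite{Thomason} and used only as input, so your proposal supplies an argument the text omits rather than competing with one.

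Your route transplants the template of Section~\ref{sec:borel} to $K$-theory:
\begin{itemize}
\item the $G$-theory localisation sequence replaces Theorem~\ref{thm:les};
\item the generic slice theorem replaces Illman's $T$-CW structure (Theorem~\ref{thm:illman-cw});
\item $1-\chi\in I_H\cap S$ for a nontrivial $\chi$ trivial on $H$ replaces Lemma~\ref{lem:kill-isotropy};
\item the unit-plus-nilpotent argument replaces Lemma~\ref{lem:euler-invert};
\item Proposition~\ref{prop:K-euler} plays the role of Theorem~\ref{thm:self-int}.
\end{itemize}
This is Thomason's own strategy, and it is sound. The slice pieces can be taken to be honest products $T/H\times V$: $T/H$ is a torus (its character group $H^\perp\subseteq X^*(T)$ is free), so its torsors are Zariski-locally trivial. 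Then $G^T_*(T/H\times V)\cong G_*(V)\otimes R(H)$ is visibly killed by $1-\chi$.

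The citation buys generality and brevity. Thomason's concentration theorem is a $G$-theory statement for much more general noetherian $T$-schemes, localised at primes $\mathfrak p\subset R(T)$ onto $X^{T_{\mathfrak p}}$. In your argument, quasi-projectivity is really needed for $G=K$ and for nilpotence of $\widetilde K_0(F)$, not for the slice theorem as you suggest. Your sketch buys an explicit check that the $K$-theoretic denominator formula is an instance of the paper's concentration--self-intersection--invertibility pattern, with the slice theorem as the only imported input. It also makes visible exactly where smoothness is used.

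A few steps need tightening.
\begin{itemize}
\item Proposition~\ref{prop:K-euler} only computes $i^*i_*(1_Z)$. The operator identity $i^*i_*=\lambda_{-1}(N^\vee)\cdot(-)$ needs $\mathrm{Tor}_k^{\OO_X}(\OO_Z,i_*\mathcal F)\cong\mathcal F\otimes\Lambda^kN^\vee$ for arbitrary $\OO_Z$-modules $\mathcal F$ (the same Koszul computation), plus its extension to higher $K$-groups.
\item With the splitting principle, the line bundles $L$ live on a flag bundle $\pi:\mathrm{Fl}\to F$, and invertibility does not descend along an arbitrary injective ring map. Either use that $K_0^T(\mathrm{Fl})$ is finite free over $K_0^T(F)$, so a unit $a$ upstairs has $a^n=\det(a\cdot)$ a unit downstairs. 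Or avoid splitting altogether: write the weight-$\chi$ summand of $N^\vee$ as $E_\chi\otimes\chi$ with $E_\chi$ of rank $r_\chi$ and trivial action. Then $\lambda_{-1}(E_\chi\otimes\chi)=(1-\chi)^{r_\chi}+m$ with $m\in\widetilde K_0(F)\otimes R(T)$ nilpotent.
\item You never state that restriction $i^*$ is itself an isomorphism. For smooth $X$ it follows from bijectivity of $i_*$ and invertibility of $i^*i_*$. For singular $X$ you only get concentration for $i_*$ in $G$-theory, as you note, so the ``restriction'' clause should be read in the smooth case.
\item Pushing forward to a point to obtain Lefschetz--Riemann--Roch requires $X$ to be proper, which is not among the stated hypotheses.
\end{itemize}
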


\section{Lefschetz-type decompositions from supported classes}\label{sec:lefschetz}

\subsection{Lefschetz objects and supported classes}

\begin{definition} \label{def:Lef}
Let $f:X\to X$ be a morphism such that the relevant shriek functors exist. Let $\Delta:X\hookrightarrow X\times X$ be the diagonal and $\Gamma_f:X\hookrightarrow X\times X$ the graph. The associated \emph{Lefschetz object} is
\begin{equation*}
L_f:=\Delta^!\big((\Gamma_f)_!\one_X\big)\in \C(X).
\end{equation*}
A \emph{supported Lefschetz class} for $f$ consists of a coefficient object $A\in\C(\pt)$ together with a class
\begin{equation*}
\lambda_f\in H^d(X;p_X^*A)
\end{equation*}
whose restriction to the open complement of the fixed-point locus $S=\Fix(f)$ vanishes; equivalently, its support is contained in $S$.
\end{definition}

\begin{remark}
The object $L_f$ is the natural object furnished by the graph--diagonal formalism. In particular fixed-point theories, one extracts from it a cohomology class $\lambda_f$ with coefficients pulled back from the point, and it is this class, rather than the object $L_f$ by itself, to which the localisation formalism applies. What follows depends only on the existence of such a supported class, not on the particular construction by which it is constructed.
\end{remark}

\begin{theorem} \label{thm:Lefschetz-formal}
Let $f:X\to X$ be a morphism, let $S=\Fix(f)$ with inclusion $i:S\hookrightarrow X$ and open complement $j:X\setminus S\hookrightarrow X$, and let
\begin{equation*}
\lambda_f\in H^d(X;p_X^*A)
\end{equation*}
be a supported Lefschetz class in the sense of \ref{def:Lef}. Assume equivalently that
$
j^*(\lambda_f)=0.
$
Then:
\begin{enumerate}
\item there is a localisation torsor
\begin{equation*}
\Loc_S^{\mathrm{tor}}(\lambda_f)\subset \Hom_{\C(S)}(\one_S,i^!p_X^*A[d]);
\end{equation*}
\item if a purity-orientation formalism and a Thom isomorphism are available for $i$, and if multiplication by the Euler class $e(i)$ is an isomorphism after the relevant coefficient localisation or periodic/twisted passage, then the torsor rigidifies to a unique class $\Loc_S(\lambda_f)$, given by
\begin{equation*}
\Loc_S(\lambda_f)=\frac{i^*\lambda_f}{e(i)};
\end{equation*}
\item the global and local indices agree:
\begin{equation*}
\int_X \lambda_f=\int_S \Loc_S(\lambda_f)
\qquad\text{in}\qquad
H^d(\pt;A),
\end{equation*}
and if $S=\bigsqcup_\lambda S_\lambda$ is a finite disjoint union, then
\begin{equation*}
\int_X \lambda_f=\sum_\lambda \int_{S_\lambda}\Loc_{S_\lambda}(\lambda_f).
\end{equation*}
\end{enumerate}
\end{theorem}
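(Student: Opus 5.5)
The plan is to reduce each of the three assertions of Theorem~\ref{thm:Lefschetz-formal} to a result already proved, applied to the open--closed pair $(i,j)$ with $Z=S=\Fix(f)$, coefficient object $p_X^*A\in\C(X)$ and class $c=\lambda_f$. Nothing about the Lefschetz object $L_f$ is used beyond the hypothesis $j^*\lambda_f=0$, in line with the remark following Definition~\ref{def:Lef}.

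For (1), I apply Definition~\ref{def:LocZ} and Theorem~\ref{thm:Loc-factor}, assuming the recollement axioms of Definition~\ref{def:recollement} for $(i,j)$. Since $j^*\lambda_f=0$, the set $\mathrm{Lift}^d_S(\lambda_f)$ is nonempty by Lemma~\ref{lem:LiftZ-torsor}. It is a torsor under $\operatorname{im}(\delta)$. Its adjoint image $\Loc_S^{\mathrm{tor}}(\lambda_f)$ lies in $\Hom_{\C(S)}(\one_S,i^!p_X^*A[d])$, which gives the claimed torsor.

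For (2), I combine the purity hypotheses (Definition~\ref{def:purity-axiom} together with the Thom isomorphism of Definition~\ref{def:Thom-iso}) with invertibility of $e(i)$, working in the localised or periodic coefficients.
\begin{enumerate}
\item Injectivity of $\cup e(i)$ makes the secondary boundary group vanish. I would argue this through Proposition~\ref{prop:canonicity-before-euler-inversion} or Corollary~\ref{cor:euler-inversion-rigidification}.
\item As a more self-contained route, I would argue directly. By Theorem~\ref{thm:self-int}, any $\gamma$ with $i_*\gamma=0$ satisfies $\gamma\smile e(i)=i^*i_*\gamma=0$, hence $\gamma=0$. Via the Thom isomorphism this shows $\ker(\forg)=0$ in degree $d$, so the torsor is a singleton by Theorem~\ref{thm:Loc-factor}.
\item Theorem~\ref{thm:Loc-by-euler} then identifies the unique element, under Thom isomorphism and adjunction, with $i^*\lambda_f/e(i)$. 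The displayed formula $\Loc_S(\lambda_f)=i^*\lambda_f/e(i)$ should be read through this identification.
\end{enumerate}

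For (3), I invoke Theorem~\ref{thm:global=local} with $Z=S$, assuming $p_X$ is proper and that the Beck--Chevalley hypotheses of Proposition~\ref{prop:Loc-proper} hold for the square over $W=\pt$. This gives $\int_X\lambda_f=\int_S\Loc_S(\lambda_f)$. The disjoint-union decomposition is the second half of that theorem, using the splitting $i_*i^!\cong\bigoplus_\lambda i_{\lambda*}i_\lambda^!$.

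The main obstacle is bookkeeping rather than mathematics. Part (3) needs $\Loc_S(\lambda_f)$ to be defined, so I would state that it holds under the rigidification of (2), or more generally whenever the torsor is a singleton. Otherwise it holds for every element of the torsor, since the proper-pushforward map of Proposition~\ref{prop:Loc-proper} lands in the singleton $\Loc_{\pt}^{\mathrm{tor}}(p_{X*}\lambda_f)=\{p_{X*}\lambda_f\}$; the open complement of $\pt$ in $\pt$ is empty. I would also check carefully that the local index of Definition~\ref{def:local-index} is computed with the Thom identification used in (2).
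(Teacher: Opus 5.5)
Your proposal is correct and follows the paper's proof, which likewise obtains (1), (2) and (3) directly from Theorem~\ref{thm:Loc-factor}, Theorem~\ref{thm:Loc-by-euler} and Theorem~\ref{thm:global=local}. Your explicit check that $\ker(\forg)=0$ via Theorem~\ref{thm:self-int}, and your observation that (3) holds for every point of the torsor because $\Loc_{\pt}^{\mathrm{tor}}(p_{X*}\lambda_f)$ is a singleton, only make explicit what the paper leaves implicit.

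Two cautions. In (2), rely on your self-contained route: Proposition~\ref{prop:canonicity-before-euler-inversion} and Corollary~\ref{cor:euler-inversion-rigidification} presuppose the tubular-neighbourhood and Gysin hypotheses of Subsection~\ref{subsec:boundary-link-transgression}, which are not assumed here. In (3), note that the square over $W=\pt$ is not Cartesian unless $S=X$, so the Beck--Chevalley isomorphism you assume generally fails. Use instead the counit map $p_{S*}i^!\simeq p_{X*}i_*i^!\to p_{X*}$, which is all your argument actually needs.
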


\begin{proof}
The first statement is precisely \ref{thm:Loc-factor} applied to the closed immersion $i:S\hookrightarrow X$ and the class $\lambda_f$. The second is the denominator formula of \ref{thm:Loc-by-euler} under the stated purity, Thom-isomorphism, and invertibility hypotheses. The third is the global-to-local index identity of \ref{thm:global=local}, together with additivity over a finite disjoint decomposition of $S$.
\end{proof}
\subsection{Equivariant motivic fixed-point localisation}

Let $G$ be a linearly reductive algebraic group over a base scheme $S$, and let $\C(-)$ be an equivariant motivic coefficient theory in which the functorialities used above are available. In the equivariant motivic setting, the six operations, gluing, and purity are provided by Hoyois \cite{HoyoisSix}, while the motivic formalism of fundamental classes, Gysin maps, and Euler classes is developed by D\'eglise--Jin--Khan \cite{DJK}. Let $X$ be a smooth proper $G$-scheme over $S$, let $f:X\to X$ be a $G$-equivariant endomorphism, and let
\[
i:F=\Fix(f)\hookrightarrow X,
\qquad
j:U=X\setminus F\hookrightarrow X
\]
be the fixed-point immersion and its open complement. In Hoyois' quadratic refinement of the Grothendieck--Lefschetz--Verdier trace formula, the global trace is expressed through the fixed-point scheme \cite[Theorem~1.3]{HoyoisTrace}; motivated by that result, we assume that there exists a supported Lefschetz class
\begin{equation*}
\lambda_f\in H^0(X;p_X^*A)
\end{equation*}
for $f$, supported on $F$, equivalently satisfying $j^*(\lambda_f)=0$. Under this support statement, the conclusions below are formal consequences of the general localisation results proved in Sections~\ref{sec:UnivLoc} and~\ref{sec:lefschetz}.

\begin{corollary}\label{cor:motivic-fixed-point}
Under the preceding assumptions, the class $\lambda_f$ determines a canonical localisation torsor
\[
\Loc_F^{\mathrm{tor}}(\lambda_f)\subset \Hom_{\C(F)}(\one_F,i^!p_X^*A).
\]
If
\[
F=\bigsqcup_{\alpha}F_{\alpha}
\]
is the decomposition into connected components, then
\[
\operatorname{Ind}_X(\lambda_f)=\sum_{\alpha}\operatorname{Ind}^{\mathrm{loc}}_{F_{\alpha}}(\lambda_f).
\]
If, moreover, the immersion $i$ is regular and the purity and concentration hypotheses required in the Euler-denominator formalism are satisfied, then the torsor rigidifies to a singleton, whose unique element is computed by the corresponding Euler-denominator expression.
\end{corollary}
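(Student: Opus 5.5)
The plan is to deduce Corollary~\ref{cor:motivic-fixed-point} formally from Theorem~\ref{thm:Lefschetz-formal}, applied to the fixed-point immersion $i:F\hookrightarrow X$ in the equivariant motivic coefficient theory. First we check that the ambient hypotheses hold. Hoyois' equivariant six-functor formalism \cite{HoyoisSix} supplies the recollement of Definition~\ref{def:recollement} for the pair $(i,j)$, together with Beck--Chevalley and the projection formula. Properness of $X$ over $S$ gives proper pushforward along $p_X$. We then read $\operatorname{Ind}_X(\lambda_f)$ as $\int_X\lambda_f$ in the sense of Definition~\ref{def:global-index}. We read $\operatorname{Ind}^{\mathrm{loc}}_{F_\alpha}(\lambda_f)$ as $\int_{F_\alpha}$ of the component of the supported refinement, as in Definition~\ref{def:local-index}.

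\emph{Step one: the torsor.} Since $j^*\lambda_f=0$ by hypothesis, Theorem~\ref{thm:Loc-factor} (equivalently part (1) of Theorem~\ref{thm:Lefschetz-formal}, with $d=0$) produces the nonempty torsor $\Loc_F^{\mathrm{tor}}(\lambda_f)\subset\Hom_{\C(F)}(\one_F,i^!p_X^*A)$ under $\operatorname{adj}(\operatorname{im}\delta)$. Here "canonical" refers to the torsor itself, not to a chosen element.

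\emph{Step two: the index decomposition.} This is the main obstacle. Theorem~\ref{thm:global=local} is stated for the canonical class $\Loc_Z(c)$, but at this stage we only have a torsor. We plan to rerun its argument with an arbitrary supported refinement $\widetilde\lambda\in\mathrm{Lift}_F^0(\lambda_f)$. Proposition~\ref{prop:Loc-proper}, applied to $p_X$ with $W=\pt$, gives $p_{X*}\lambda_f=\forg(\text{pushforward of }\widetilde\lambda)$, and since the open complement in $\pt$ is empty, $\forg$ is the identity there. This yields $\int_X\lambda_f=\int_F\operatorname{adj}(\widetilde\lambda)$, and the right-hand side is therefore independent of the choice of $\widetilde\lambda$.

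The components $F_\alpha$ are open and closed in $F$, and there are finitely many because $X$ is of finite type. The splitting $i_*i^!\cong\bigoplus_\alpha i_{\alpha*}i_\alpha^!$ then decomposes $\widetilde\lambda=\sum_\alpha\widetilde\lambda_\alpha$, and additivity of pushforward gives the stated sum. To make the local indices well defined, we would either note that each $\int_{F_\alpha}$ of $\widetilde\lambda_\alpha$ is independent of choices once purity is available, or, more simply, define $\operatorname{Ind}^{\mathrm{loc}}_{F_\alpha}$ through a chosen refinement and observe that only the total sum is asserted to be invariant. We would flag this point honestly.

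\emph{Step three: rigidification.} Assume $i$ is regular and the DJK purity and Euler formalism \cite{DJK} supplies Definition~\ref{def:purity-axiom} and the Thom isomorphism. Assume also that concentration holds, or that Euler multiplication is injective. Then Proposition~\ref{prop:canonicity-before-euler-inversion}, or Definition~\ref{def:conc} together with Lemma~\ref{lem:LiftZ-torsor}, shows that $\operatorname{im}\delta=0$, so the torsor is a singleton. Theorem~\ref{thm:Loc-by-euler}, or part (2) of Theorem~\ref{thm:Lefschetz-formal}, identifies its element with $i^*\lambda_f/e(i)$ under the Thom identification. On each $F_\alpha$ this reads $i_\alpha^*\lambda_f/e(i_\alpha)$, using the compatibility of purity with the disjoint-union splitting.
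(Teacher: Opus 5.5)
Your proposal is correct and follows the paper's own proof: the paper likewise uses Theorem~\ref{thm:Loc-factor} for the torsor, Theorem~\ref{thm:global=local} with additivity over the components $F_\alpha$ for the index identity, and Theorems~\ref{thm:Loc-by-euler} and~\ref{thm:universal-euler} for the rigidification; your Step two is in fact more careful than the paper, which cites Theorem~\ref{thm:global=local} even though that result is phrased for the canonical class $\Loc_Z(c)$ and the torsor need not be a singleton at that stage. Of your two options for the local terms, keep the second, because purity alone does not make the individual $\int_{F_\alpha}\widetilde\lambda_\alpha$ independent of the chosen refinement (for $F$ two points of $\mathbb P^1$ in degree $2$, refinements differ by elements $(a,-a)$ of $\operatorname{im}\delta$), so before rigidification only the sum is an invariant.
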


\begin{proof}
Apply \ref{thm:Loc-factor} to the closed immersion $i:F\hookrightarrow X$ and the class $\lambda_f$, using the vanishing $j^*(\lambda_f)=0$. The decomposition of the global index follows from \ref{thm:global=local} together with additivity over the connected components of $F$. The final statement is an immediate consequence of Theorems~\ref{thm:Loc-by-euler} and \ref{thm:universal-euler}.
\end{proof}

\begin{remark}\label{rem:motivic-fixed-point-comparison}
In settings where the quadratic motivic fixed-point formula of Hoyois is available \cite{HoyoisTrace}, the rigidified local term above agrees with the corresponding quadratic local contribution. In particular, when the base is a field and the fixed points are isolated and \'{e}tale, one recovers the associated Grothendieck--Witt-valued local terms. For rigidified localisation statements in quadratic and, more generally, $SL_{\eta}$-oriented theories, compare also Levine \cite{LevineWitt} and D'Angelo \cite{DAngeloSLeta}.
\end{remark}

\subsection{External-product compatibility in geometric realisations}\label{subsec:Loc-box-model}

The product compatibility used below is a specific form of \ref{prop:Loc-box}, now stated under the additional hypotheses required to relate $\boxtimes$ to extraordinary pullback and closed pushforward.

\begin{proposition} \label{prop:Loc-box-model}
Let $i:Z\hookrightarrow X$ and $i':Z'\hookrightarrow X'$ be closed immersions with open complements
$j:U\hookrightarrow X$ and $j':U'\hookrightarrow X'$. Let $A\in\C(X)$, $A'\in\C(X')$, and let
$c\in H^d(X;A)$, $c'\in H^{d'}(X';A')$ satisfy $j^*(c)=0$ and ${j'}^*(c')=0$.

Assume that the bifunctor
$
\boxtimes:\C(X)\times\C(X')\to\C(X\times X')
$
exists, is biexact, and is compatible with pullback and proper pushforward. Assume moreover that for the product immersion $i\times i'$ one has the corresponding compatibilities of $\boxtimes$ with closed pushforward and extraordinary pullback, functorially and compatibly with counits, so that
\begin{equation*}
((i_*C)\boxtimes (i'_*C'))\simeq (i\times i')_*(C\boxtimes C')
\end{equation*}
and
\begin{equation*}
(i\times i')^!(A\boxtimes A')\simeq i^!A\boxtimes {i'}^!A'.
\end{equation*}
Then:
\begin{itemize}
\item for every pair of supported refinements
$
\widetilde c\in \mathrm{Lift}_Z^d(c),  \ 
\widetilde c'\in \mathrm{Lift}_{Z'}^{d'}(c'),
$
there is a naturally associated supported refinement
\begin{equation}\label{eq:product-supported-refinement}
\widetilde c\boxtimes \widetilde c'\in \mathrm{Lift}_{Z\times Z'}^{d+d'}(c\boxtimes c');
\end{equation}
\item consequently one obtains a natural map of torsors
\begin{equation}\label{eq:Loc-box-model-torsor}
\Loc_Z^{\mathrm{tor}}(c)\times \Loc_{Z'}^{\mathrm{tor}}(c')
\longrightarrow
\Loc^{\mathrm{tor}}_{Z\times Z'}(c\boxtimes c');
\end{equation}
\item in the uniqueness range,
\begin{equation}\label{eq:Loc-box-model}
\Loc_{Z\times Z'}(c\boxtimes c')
=
\Loc_Z(c)\boxtimes \Loc_{Z'}(c').
\end{equation}
\end{itemize}
\end{proposition}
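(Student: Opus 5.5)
The plan is to mirror the proof of Proposition~\ref{prop:Loc-box}, replacing the external product with the fixed class $\beta$ by the external product of two supported refinements. First I will check that $c\boxtimes c'$ restricts to zero on the open complement of $Z\times Z'$. That complement is $(U\times X')\cup(X\times U')$. The class $c\boxtimes c'$ restricts to $(j^*c)\boxtimes c'=0$ on $U\times X'$, and to $c\boxtimes (j'^*c')=0$ on $X\times U'$, by compatibility of $\boxtimes$ with pullback. In fact vanishing on the union is not even needed separately. Once a supported refinement $\widetilde c\boxtimes\widetilde c'$ is exhibited, exactness in Theorem~\ref{thm:les} for the pair $(X\times X', Z\times Z')$ forces the restriction to the complement to vanish. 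So I will deduce the vanishing from the first bullet instead of proving a Mayer--Vietoris statement.

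For the first bullet, I take $\widetilde c:\one_X\to i_*i^!A[d]$ and $\widetilde c':\one_{X'}\to i'_*i'^!A'[d']$ and form their external product as in Definition~\ref{def:ext-prod-coh}. This gives
\[
\one_{X\times X'}\simeq\one_X\boxtimes\one_{X'}\xrightarrow{\widetilde c\boxtimes\widetilde c'}(i_*i^!A)\boxtimes(i'_*i'^!A')[d+d']\simeq (i\times i')_*(i^!A\boxtimes i'^!A')[d+d'].
\]
The last isomorphism is the assumed closed-pushforward compatibility. Composing with the inverse of $(i\times i')^!(A\boxtimes A')\simeq i^!A\boxtimes i'^!A'$, pushed forward by $(i\times i')_*$, lands in $H^{d+d'}_{Z\times Z'}(X\times X';A\boxtimes A')$.

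The key verification is that the forget-support map sends this element to $c\boxtimes c'$. The hypothesis of compatibility with counits says that, under the two identifications, the counit $\epsilon_{A\boxtimes A'}$ of $(i\times i')_*\dashv(i\times i')^!$ corresponds to $\epsilon_A\boxtimes\epsilon_{A'}$. Bifunctoriality of $\boxtimes$ then gives
\[
(\epsilon_A\boxtimes\epsilon_{A'})\circ(\widetilde c\boxtimes\widetilde c')=(\epsilon_A\circ\widetilde c)\boxtimes(\epsilon_{A'}\circ\widetilde c')=c\boxtimes c'.
\]
I expect this to be the main obstacle. One has to make precise that the interaction of the shift isomorphisms $A[p]\boxtimes B[q]\simeq(A\boxtimes B)[p+q]$ with the counits introduces no sign discrepancy. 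I would handle this by observing that the same shift isomorphism is used on both sides, so any Koszul sign cancels, consistent with the conventions of Subsection~\ref{subsec:shift-sign-conventions}. Naturality in $(\widetilde c,\widetilde c')$ follows from bifunctoriality.

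For the second bullet, I define the map on adjoints by $(\lambda,\lambda')\mapsto\lambda\boxtimes\lambda'$. Here I use that adjunction for $(i\times i')_*\dashv(i\times i')^!$ is compatible with the product of the adjunctions for $i$ and $i'$, which is again a consequence of counit compatibility. This is because the adjoint of a map into $(i\times i')_*M$ is determined by postcomposition with a counit-type map after restriction. I will also note that $\one_{Z\times Z'}\simeq\one_Z\boxtimes\one_{Z'}$. By the first bullet, the image lies in $\Loc^{\mathrm{tor}}_{Z\times Z'}(c\boxtimes c')$, which gives \eqref{eq:Loc-box-model-torsor}. As in Proposition~\ref{prop:Loc-box}, I will make no claim of equivariance beyond the obvious one, namely $\delta(\gamma)\boxtimes\widetilde c'$ again lies in $\ker\forg$, and no surjectivity claim. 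Finally, in the uniqueness range the image of the unique pair must be the unique element, which gives \eqref{eq:Loc-box-model}.
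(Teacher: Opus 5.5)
Your proposal is correct and follows essentially the same route as the paper: form $\widetilde c\boxtimes\widetilde c'$, identify its target with $(i\times i')_*(i\times i')^!(A\boxtimes A')[d+d']$, use counit compatibility and bifunctoriality to see that its forget-support image is $c\boxtimes c'$, then pass to adjoints and use uniqueness in the singleton case. Your additional points are ones the paper leaves implicit, and you handle them correctly: you deduce vanishing of $c\boxtimes c'$ on the complement $(U\times X')\cup(X\times U')$ from the fact that restriction to the complement annihilates the image of $\forg$, rather than from a Mayer--Vietoris argument, and you flag the naturality of the shift isomorphisms and the compatibility of the adjunction isomorphisms with $\boxtimes$.
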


\begin{proof}
Let
$$
\widetilde c:\one_X\to i_*i^!A[d],
\qquad
\widetilde c':\one_{X'}\to i'_*i'^!A'[d']
$$
be supported refinements of $c$ and $c'$, so that
$$
\epsilon_{A[d]}\circ \widetilde c=c,
\qquad
\epsilon_{A'[d']}\circ \widetilde c'=c'.
$$
Taking external products gives
\begin{align*}
\one_{X\times X'}
&\simeq \one_X\boxtimes \one_{X'}
\xrightarrow{\widetilde c\boxtimes \widetilde c'}
(i_*i^!A[d])\boxtimes (i'_*i'^!A'[d']) \\
&\simeq (i\times i')_*(i^!A\boxtimes i'^!A')[d+d']
\simeq (i\times i')_*(i\times i')^!(A\boxtimes A')[d+d'].
\end{align*}
By compatibility with the counits, the composite of this morphism with
$$
(i\times i')_*(i\times i')^!(A\boxtimes A')[d+d']\longrightarrow (A\boxtimes A')[d+d']
$$
is precisely $c\boxtimes c'$. Hence \eqref{eq:product-supported-refinement} defines a supported refinement of $c\boxtimes c'$ along $Z\times Z'$. This proves the first assertion.

Passing to adjoints under $(i\times i')_*\dashv (i\times i')^!$ yields the natural map of torsors \eqref{eq:Loc-box-model-torsor}. If both original torsors are singletons, then the image of the unique pair of localised classes is again unique, and one obtains \eqref{eq:Loc-box-model}.
\end{proof}


\subsection{Constructible sheaves in the classical topology}

The final three subsections serve a purely identificatory purpose. They do not re-prove the abstract results of Sections~3--10. Rather, they record, in three standard geometric settings, that the requisite open--closed formalism, base-change statements, proper pushforward, and external-product compatibilities are already available in the literature, so that the constructions developed above apply without alteration. What changes from one model to another is not the abstract construction, but only the form taken by purity, Euler classes, and the ensuing rigidification of the localisation torsor.

\paragraph{Model and formalism.}
Let $X$ be a complex algebraic variety, or more generally a complex analytic space, and set
\begin{equation*}
\mathcal C(X):=D^b_c(X;k),
\end{equation*}
where $k$ is a field of characteristic $0$. The unit object is $\one_X=k_X$, the monoidal structure is the derived tensor product, and the ground ring is
\begin{equation*}
R=\End_{\mathcal C(\pt)}(k)\cong k.
\end{equation*}
Thus, for every $A\in D^b_c(X;k)$,
\begin{equation*}
H^d(X;A)=\Hom_{D^b_c(X;k)}(k_X,A[d]),
\end{equation*}
whereas for a closed immersion $i:Z\hookrightarrow X$ one has
\begin{equation*}
H_Z^d(X;A)=\Hom_{D^b_c(X;k)}(k_X,i_*i^!A[d])
\cong
\Hom_{D^b_c(Z;k)}(k_Z,i^!A[d]).
\end{equation*}
The functors $j_!,j^*,j_*$ for open immersions and $i_*,i^*,i^!$ for closed immersions, together with the corresponding localisation triangles, are standard; see \cite[Chapter~IV]{KS} and \cite[Sections~1.1, 1.4, 4.1]{BBD}. The same formalism supplies the Beck--Chevalley isomorphisms relevant here, identifies $f_!$ with $f_*$ for proper morphisms, and furnishes the external product
\begin{equation*}
A\boxtimes B:=p^*A\otimes^{\mathbf L}q^*B,
\end{equation*}
compatible with pullback and proper pushforward. Accordingly, the recollement axioms of \ref{def:recollement} and the formal hypotheses \textup{(BC)}, \textup{(PF)}, and \textup{(Ext)} of \ref{def:Loc-func-axioms} hold in this model.

\paragraph{Torsors of supported refinements.}
Let $i:Z\hookrightarrow X$ be closed with open complement $j:U\hookrightarrow X$, let $A\in D^b_c(X;k)$, and let
\[
c\in H^d(X;A)=\Hom_{D^b_c(X;k)}(k_X,A[d])
\]
satisfy $j^*c=0$. The localisation sequence of Section~3 becomes
\begin{equation*}
\cdots\longrightarrow H_Z^d(X;A)
\xrightarrow{\forg}
H^d(X;A)
\xrightarrow{j^*}
H^d(U;j^*A)
\xrightarrow{\delta}
H_Z^{d+1}(X;A)
\longrightarrow\cdots.
\end{equation*}
Hence
\begin{equation*}
\mathrm{Lift}_Z^d(c)=\Bigl\{\widetilde c\in H_Z^d(X;A)\ \Big|\ \forg(\widetilde c)=c\Bigr\}
\end{equation*}
is a torsor under
\begin{equation}\label{eq:constructible-torsor-group-explicit}
\mathrm{im}\Bigl(\delta:H^{d-1}(U;j^*A)\longrightarrow H_Z^d(X;A)\Bigr),
\end{equation}
and therefore determines a localisation torsor
\begin{equation}\label{eq:constructible-localisation-torsor-explicit}
\Loc_Z^{\mathrm{tor}}(c)
\subset
\Hom_{D^b_c(Z;k)}(k_Z,i^!A[d]).
\end{equation}
If the secondary boundary group in \eqref{eq:constructible-torsor-group-explicit} vanishes, then one obtains a canonical localised class
\begin{equation*}
\Loc_Z(c)\in \Hom_{D^b_c(Z;k)}(k_Z,i^!A[d]).
\end{equation*}
Excision, the natural pullback map under Cartesian base change, proper pushforward, and compatibility with $\boxtimes$ are then precisely those proved abstractly in Propositions~\ref{prop:Loc-excision}, \ref{prop:Loc-bc}, \ref{prop:Loc-proper}, and \ref{prop:Loc-box-model}.

\paragraph{Purity and Euler denominators.}
If $i:Z\hookrightarrow X$ is a regular immersion of complex codimension $c$, then complex orientation yields
\begin{equation*}
i^!k_X\simeq k_Z[-2c],
\end{equation*}
compare \cite[Section~5.4]{BBD}. Consequently,
$
H_Z^d(X;k_X)
\cong
H^{d-2c}(Z;k_Z),
$
and, more generally, whenever the corresponding oriented purity statement is available for a coefficient object $A$, the target of the localisation torsor may be read explicitly in shifted degree. The Euler class is then
\begin{equation}\label{eq:constructible-euler-class-explicit}
e(i)\in H^{2c}(Z;k_Z).
\end{equation}
If multiplication by \eqref{eq:constructible-euler-class-explicit} is an isomorphism after the relevant coefficient localisation or periodic/twisted passage, then Theorems~\ref{thm:self-int} and \ref{thm:Loc-by-euler} specialise to
\begin{equation}\label{eq:constructible-explicit-euler-localisation}
\Loc_Z(c)=\frac{i^*c}{e(i)},
\qquad
c=i_*\!\left(\frac{i^*c}{e(i)}\right).
\end{equation}
In the ordinary nonequivariant constructible setting this invertibility is an additional hypothesis rather than a generic feature, so \eqref{eq:constructible-explicit-euler-localisation} should be read as a rigidified form of the torsor \eqref{eq:constructible-localisation-torsor-explicit}, not as part of the purely formal result.

Let $f:X\to X$ be a morphism for which the graph--diagonal construction of \ref{def:Lef} is defined, and let $S=\Fix(f)$ with open complement $u:X\setminus S\hookrightarrow X$. In the classical sheaf-theoretic Lefschetz formalism the literature furnishes local terms on the fixed-point locus; see \cite[Section~4.1]{BBD}, \cite[Chapter~IX]{KS}, and the refinements in \cite{Fujiwara,Varshavsky}. To place these terms within the present formalism, one assumes a global supported Lefschetz class
\begin{equation*}
\lambda_f\in H^d(X;p_X^*A)
\end{equation*}
whose restriction to the complement vanishes,
\begin{equation*}
u^*(\lambda_f)=0,
\end{equation*}
and whose induced local terms agree with the chosen fixed-point theory. One then obtains a fixed-point torsor
\begin{equation*}
\Loc_S^{\mathrm{tor}}(\lambda_f)
\subset
\Hom_{D^b_c(S;k)}(k_S,i^!p_X^*A[d]).
\end{equation*}
If $S=\bigsqcup_\lambda S_\lambda$, then \ref{thm:global=local} gives
\begin{equation*}
\int_X\lambda_f
=
\sum_\lambda\int_{S_\lambda}\Loc_{S_\lambda}(\lambda_f).
\end{equation*}
If each inclusion $i_\lambda:S_\lambda\hookrightarrow X$ is regular, and one has passed to a coefficient localisation or to a periodic or twisted coefficient theory in which multiplication by the corresponding Euler class is an isomorphism in the relevant degrees, then the local terms rigidify to
\begin{equation*}
\Loc_{S_\lambda}(\lambda_f)=\frac{i_\lambda^*\lambda_f}{e(i_\lambda)}.
\end{equation*}
Thus the constructible setting exhibits precisely the same pattern as the abstract theory: first a torsorial precursor of the local term, and only thereafter, under this additional coefficient-localisation hypothesis, the familiar Euler-denominator expression.

\subsection{\texorpdfstring{$\ell$}{l}-adic constructible complexes}

\paragraph{Model and formalism.}
Let $X$ be a scheme of finite type over a base for which the usual $\ell$-adic six-functor formalism is available, and set
\begin{equation*}
\mathcal C(X):=D^b_c(X,\mathbb Q_\ell),
\qquad
\one_X=\mathbb Q_{\ell,X},
\qquad
R=\End_{\mathcal C(\pt)}(\mathbb Q_\ell)\cong \mathbb Q_\ell.
\end{equation*}
Hence, for every $A\in D^b_c(X,\mathbb Q_\ell)$,
\begin{equation*}
H^d(X;A)=\Hom_{D^b_c(X,\mathbb Q_\ell)}(\mathbb Q_{\ell,X},A[d]),
\end{equation*}
and for a closed immersion $i:Z\hookrightarrow X$ one has
\begin{equation*}
H_Z^d(X;A)=\Hom_{D^b_c(X,\mathbb Q_\ell)}(\mathbb Q_{\ell,X},i_*i^!A[d])
\cong
\Hom_{D^b_c(Z,\mathbb Q_\ell)}(\mathbb Q_{\ell,Z},i^!A[d]).
\end{equation*}
The six functors and recollement formalism for $\ell$-adic constructible complexes are standard; see \cite{SGA4,BBD}. In particular, one has the localisation triangles for open and closed immersions, the relevant Beck--Chevalley isomorphisms, proper pushforward, and the external product
\begin{equation*}
A\boxtimes B:=p^*A\otimes^{\mathbf L}q^*B,
\end{equation*}
with the usual compatibilities. Thus the hypotheses isolated abstractly in \ref{def:recollement} and \ref{def:Loc-func-axioms} are satisfied in this model.

\paragraph{Torsors of supported refinements.}
Let $i:Z\hookrightarrow X$ be closed with open complement $j:U\hookrightarrow X$, let $A\in D^b_c(X,\mathbb Q_\ell)$, and let
\[
c\in H^d(X;A)=\Hom_{D^b_c(X,\mathbb Q_\ell)}(\mathbb Q_{\ell,X},A[d])
\]
satisfy $j^*c=0$. Then the localisation sequence reads
\begin{equation*}
\cdots\longrightarrow H_Z^d(X;A)
\xrightarrow{\forg}
H^d(X;A)
\xrightarrow{j^*}
H^d(U;j^*A)
\xrightarrow{\delta}
H_Z^{d+1}(X;A)
\longrightarrow\cdots,
\end{equation*}
so that
\begin{equation*}
\mathrm{Lift}_Z^d(c)=\Bigl\{\widetilde c\in H_Z^d(X;A)\ \Big|\ \forg(\widetilde c)=c\Bigr\}
\end{equation*}
is a torsor under
\begin{equation}\label{eq:ladic-torsor-group-explicit}
\mathrm{im}\Bigl(\delta:H^{d-1}(U;j^*A)\longrightarrow H_Z^d(X;A)\Bigr),
\end{equation}
and hence determines
\begin{equation}\label{eq:ladic-localisation-torsor-explicit}
\Loc_Z^{\mathrm{tor}}(c)
\subset
\Hom_{D^b_c(Z,\mathbb Q_\ell)}(\mathbb Q_{\ell,Z},i^!A[d]).
\end{equation}
If the secondary boundary group in \eqref{eq:ladic-torsor-group-explicit} vanishes, then one obtains a canonical localised class
\begin{equation*}
\Loc_Z(c)\in \Hom_{D^b_c(Z,\mathbb Q_\ell)}(\mathbb Q_{\ell,Z},i^!A[d]).
\end{equation*}
The functorial properties are precisely those established abstractly in Section~\ref{sec:UnivLoc}.

\paragraph{Purity, Tate twists, and Euler classes.}
If $i:Z\hookrightarrow X$ is a regular immersion of codimension $c$, then absolute purity gives
\begin{equation*}
i^!\mathbb Q_{\ell,X}(r)\simeq \mathbb Q_{\ell,Z}(r-c)[-2c]
\end{equation*}
for every Tate twist $r\in\mathbb Z$; see \cite[Expos\'e~XVIII]{SGA4} and \cite[Section~5.1]{BBD}. Hence
\begin{equation*}
H_Z^d\bigl(X;\mathbb Q_{\ell,X}(r)\bigr)
\cong
H^{d-2c}\bigl(Z;\mathbb Q_{\ell,Z}(r-c)\bigr),
\end{equation*}
and the Euler class is an element
\begin{equation}\label{eq:ladic-euler-class-explicit}
e(i)\in H^{2c}\bigl(Z;\mathbb Q_{\ell,Z}(c)\bigr).
\end{equation}
If multiplication by \eqref{eq:ladic-euler-class-explicit} is an isomorphism after the relevant coefficient localisation or periodic/twisted passage, then Theorems~\ref{thm:self-int} and \ref{thm:Loc-by-euler} yield
\begin{equation}\label{eq:ladic-explicit-euler-localisation}
\Loc_Z(c)=\frac{i^*c}{e(i)},
\qquad
c=i_*\!\left(\frac{i^*c}{e(i)}\right).
\end{equation}
As in the classical constructible setting, the nonequivariant $\ell$-adic theory does not make this invertibility automatic; \eqref{eq:ladic-explicit-euler-localisation} is therefore a conditional rigidification of the torsor \eqref{eq:ladic-localisation-torsor-explicit}.

\paragraph{Fixed-point local terms.}
Let $f:X\to X$ be an endomorphism for which the Lefschetz object of \ref{def:Lef} is defined, and let $S=\Fix(f)$ with open complement $u:X\setminus S\hookrightarrow X$. The classical $\ell$-adic fixed-point formalism provides local terms on the fixed locus; compare \cite{Fujiwara,Varshavsky}. To bring those local terms into the present formalism, one assumes a global supported Lefschetz class
\begin{equation*}
\lambda_f\in H^d(X;p_X^*A)
\end{equation*}
with
\begin{equation*}
u^*(\lambda_f)=0,
\end{equation*}
and such that the induced local terms agree with the chosen $\ell$-adic fixed-point theory. One thereby obtains
\begin{equation*}
\Loc_S^{\mathrm{tor}}(\lambda_f)
\subset
\Hom_{D^b_c(S,\mathbb Q_\ell)}(\mathbb Q_{\ell,S},i^!p_X^*A[d]).
\end{equation*}
If $S=\bigsqcup_\lambda S_\lambda$, then \ref{thm:global=local} gives
\begin{equation*}
\int_X\lambda_f=
\sum_\lambda\int_{S_\lambda}\Loc_{S_\lambda}(\lambda_f).
\end{equation*}
If each inclusion $i_\lambda:S_\lambda\hookrightarrow X$ is regular, and one has passed to a coefficient localisation or to a periodic or twisted coefficient theory in which multiplication by the corresponding Euler class is an isomorphism in the relevant degrees, then
\begin{equation*}
\Loc_{S_\lambda}(\lambda_f)=\frac{i_\lambda^*\lambda_f}{e(i_\lambda)}.
\end{equation*}
The $\ell$-adic picture is thus formally identical: first a torsor of supported local terms, and only afterwards, under this additional coefficient-localisation hypothesis, the usual Euler-denominator formula.


\begin{remark}
The six-functor formalism for rigid analytic motives developed by
Ayoub--Gallauer--Vezzani~\cite{AyoubGallauerVezzaniRigidAnalyticMotives}
provides another natural setting in which open--closed localisation
triangles, exceptional functors, base change and projection formulae are
available.  In particular, their localisation formula for a closed
immersion and its open complement gives precisely the kind of
open--closed exactness used in the present paper.

Consequently, in any such cohomology theory, a class
$c\in H^d(X;A)$ satisfying $j^*c=0$ gives rise, by the construction
above, to the torsor of supported refinements
\[
\operatorname{Lift}_Z^d(c)
=
\{\widetilde c\in H_Z^d(X;A)\mid \forg(\widetilde c)=c\}.
\]
We do not pursue the rigid analytic realisation further here.
\end{remark}

\subsection{Deligne--Mumford stacks}

Let $\mathcal X$ be a Deligne--Mumford stack of finite type over a base for which the $\ell$-adic six-functor formalism is available, for instance over a separably closed field of characteristic prime to $\ell$. We work with
\begin{equation*}
\mathcal C(\mathcal X):=D^b_c(\mathcal X,\mathbb Q_\ell),
\qquad
\one_{\mathcal X}=\mathbb Q_{\ell,\mathcal X},
\qquad
R=\End_{\mathcal C(\pt)}(\mathbb Q_\ell)\cong \mathbb Q_\ell.
\end{equation*}
Thus
\begin{equation*}
H^d(\mathcal X;A)=\Hom_{D^b_c(\mathcal X,\mathbb Q_\ell)}(\mathbb Q_{\ell,\mathcal X},A[d]),
\end{equation*}
whereas for a closed immersion $i:\mathcal Z\hookrightarrow \mathcal X$ one has
\begin{equation}\label{eq:dm-supported-explicit}
H^d_{\mathcal Z}(\mathcal X;A)
=
\Hom_{D^b_c(\mathcal X,\mathbb Q_\ell)}(\mathbb Q_{\ell,\mathcal X},i_*i^!A[d])
\cong
\Hom_{D^b_c(\mathcal Z,\mathbb Q_\ell)}(\mathbb Q_{\ell,\mathcal Z},i^!A[d]).
\end{equation}
The six operations for sheaves on Artin stacks, hence in particular on Deligne--Mumford stacks, are developed by Laszlo and Olsson in \cite{LO1,LO2}. In particular, the recollement triangles, the relevant Cartesian base-change isomorphisms, proper pushforward, and the external tensor product are all available in the stack-theoretic setting. Accordingly, the abstract constructions of the paper apply to Deligne--Mumford stacks once the corresponding hypotheses are invoked in the given geometric situation.

\paragraph{Torsors of supported refinements.}
Let $i:\mathcal Z\hookrightarrow \mathcal X$ be closed with open complement $j:\mathcal U\hookrightarrow \mathcal X$, let $A\in D^b_c(\mathcal X,\mathbb Q_\ell)$, and let
\[
c\in H^d(\mathcal X;A)=\Hom_{D^b_c(\mathcal X,\mathbb Q_\ell)}(\mathbb Q_{\ell,\mathcal X},A[d])
\]
satisfy $j^*c=0$. Then the localisation sequence reads
\begin{equation*}
\cdots\longrightarrow H^d_{\mathcal Z}(\mathcal X;A)
\xrightarrow{\forg}
H^d(\mathcal X;A)
\xrightarrow{j^*}
H^d(\mathcal U;j^*A)
\xrightarrow{\delta}
H^{d+1}_{\mathcal Z}(\mathcal X;A)
\longrightarrow\cdots,
\end{equation*}
so that
\begin{equation*}
\mathrm{Lift}_{\mathcal Z}^d(c)
=
\Bigl\{\widetilde c\in H^d_{\mathcal Z}(\mathcal X;A)\ \Big|\ \forg(\widetilde c)=c\Bigr\}
\end{equation*}
is a torsor under
\begin{equation}\label{eq:dm-delta-action}
\mathrm{im}\Bigl(\delta:H^{d-1}(\mathcal U;j^*A)\longrightarrow H^d_{\mathcal Z}(\mathcal X;A)\Bigr).
\end{equation}
Passing through \eqref{eq:dm-supported-explicit}, one obtains
\begin{equation*}
\Loc_{\mathcal Z}^{\mathrm{tor}}(c)
\subset
\Hom_{D^b_c(\mathcal Z,\mathbb Q_\ell)}(\mathbb Q_{\ell,\mathcal Z},i^!A[d]).
\end{equation*}
If the secondary boundary group in \eqref{eq:dm-delta-action} vanishes, then the canonical localised class
\begin{equation*}
\Loc_{\mathcal Z}(c)
\in
\Hom_{D^b_c(\mathcal Z,\mathbb Q_\ell)}(\mathbb Q_{\ell,\mathcal Z},i^!A[d])
\end{equation*}
is defined. The functorial properties established in Section~\ref{sec:UnivLoc} then carry over verbatim.

\paragraph{Purity, shifts, and Euler classes.}
Assume that $i:\mathcal Z\hookrightarrow \mathcal X$ is representable and regular of codimension $c$, and that the corresponding absolute purity isomorphism is available in the chosen stack-theoretic context. Then
\begin{equation}\label{eq:dm-purity-explicit}
i^!\mathbb Q_{\ell,\mathcal X}(r)
\simeq
\mathbb Q_{\ell,\mathcal Z}(r-c)[-2c].
\end{equation}
Substituting \eqref{eq:dm-purity-explicit} into \eqref{eq:dm-supported-explicit} yields
\begin{equation*}
H^d_{\mathcal Z}\bigl(\mathcal X;\mathbb Q_{\ell,\mathcal X}(r)\bigr)
\cong
H^{d-2c}\bigl(\mathcal Z;\mathbb Q_{\ell,\mathcal Z}(r-c)\bigr),
\end{equation*}
so that
\begin{equation}\label{eq:dm-torsor-pure-target}
\Loc_{\mathcal Z}^{\mathrm{tor}}(c)
\subset
H^{d-2c}\bigl(\mathcal Z;\mathbb Q_{\ell,\mathcal Z}(r-c)\bigr)
\end{equation}
for every class $c\in H^d(\mathcal X;\mathbb Q_{\ell,\mathcal X}(r))$ vanishing on the open complement. The Euler class of the normal bundle is
\begin{equation}\label{eq:dm-euler-class}
e(i)
\in
H^{2c}\bigl(\mathcal Z;\mathbb Q_{\ell,\mathcal Z}(c)\bigr).
\end{equation}
If multiplication by \eqref{eq:dm-euler-class} is an isomorphism in the relevant localised, periodic or twisted coefficient theory, then Section~6 specialises to
\begin{equation*}
\Loc_{\mathcal Z}(c)=\frac{i^*c}{e(i)},
\qquad
c=i_*\!\left(\frac{i^*c}{e(i)}\right).
\end{equation*}
As in the scheme-theoretic $\ell$-adic setting, this is a conditional rigidification of the torsor \eqref{eq:dm-torsor-pure-target}, not part of the purely formal result.

\paragraph{Characteristic classes on smooth Deligne--Mumford stacks.}
The preceding discussion has a particularly explicit form for stack-theoretic characteristic classes. Let $\mathcal X$ be smooth, let $E$ be a vector bundle of rank $r$ on $\mathcal X$, and let $P$ be a homogeneous polynomial of degree $m$ in the Chern classes of $E$; we write
\begin{equation}\label{eq:dm-cglob}
c_{\mathrm{glob}}
:=
\operatorname{cl}_\ell\!\bigl(P(c_1(E),\dots,c_r(E))\bigr)
\in
H^{2m}(\mathcal X;\mathbb Q_{\ell,\mathcal X}(m)),
\end{equation}
where the underlying stack-theoretic Chern classes are understood in the usual intersection-theoretic sense for Deligne--Mumford stacks, for instance as in Vistoli's theory \cite{Vistoli}. If
\begin{equation}\label{eq:dm-char-vanishing}
j^*c_{\mathrm{glob}}=0
\qquad\text{in}\qquad
H^{2m}(\mathcal U;\mathbb Q_{\ell,\mathcal U}(m)),
\end{equation}
then the general machinery of Sections~\ref{sec:UnivLoc}, \ref{sec:dual}, and \ref{sec:concentration} applies verbatim to $c_{\mathrm{glob}}$. More precisely, the set
\begin{equation}\label{eq:dm-char-lifts}
\mathrm{Lift}^{2m}_{\mathcal Z}(c_{\mathrm{glob}})
:=
\Bigl\{
\widetilde c\in H^{2m}_{\mathcal Z}(\mathcal X;\mathbb Q_{\ell,\mathcal X}(m))
\ \Big|\ 
\forg(\widetilde c)=c_{\mathrm{glob}}
\Bigr\}
\end{equation}
is a torsor under
\begin{equation}\label{eq:dm-char-delta}
\operatorname{im}\!\Bigl(
\delta:H^{2m-1}(\mathcal U;\mathbb Q_{\ell,\mathcal U}(m))
\longrightarrow
H^{2m}_{\mathcal Z}(\mathcal X;\mathbb Q_{\ell,\mathcal X}(m))
\Bigr).
\end{equation}
Via the purity isomorphism \eqref{eq:dm-purity-explicit}, this torsor is identified with a torsor in
\begin{equation}\label{eq:dm-char-pure}
H^{2m-2c}(\mathcal Z;\mathbb Q_{\ell,\mathcal Z}(m-c)).
\end{equation}
If the Euler class \eqref{eq:dm-euler-class} becomes invertible after the chosen localisation of coefficients, then the torsor collapses to the canonical class
\begin{equation}\label{eq:dm-char-euler}
\Loc_{\mathcal Z}(c_{\mathrm{glob}})
=
\frac{i^*c_{\mathrm{glob}}}{e(i)}
\in
H^{2m-2c}(\mathcal Z;\mathbb Q_{\ell,\mathcal Z}(m-c))[e(i)^{-1}].
\end{equation}
A particularly specific case is obtained by taking $P=c_r$. If $E$ carries a global section $s\in\Gamma(\mathcal X,E)$ whose zero-locus is contained in $\mathcal Z$, then $s|_{\mathcal U}$ is nowhere vanishing, hence
\begin{equation*}
j^*\operatorname{cl}_\ell\!\bigl(c_r(E)\bigr)=0.
\end{equation*}
Under the usual regularity hypothesis on the zero-locus of $s$, the top Chern class is represented by the cycle of zeros of $s$, so that \eqref{eq:dm-char-lifts}--\eqref{eq:dm-char-euler} furnish a direct stack-theoretic localisation statement for the characteristic class of that zero-scheme.

\paragraph{Equivariant fixed loci and characteristic classes.}
The same pattern admits a fixed-locus refinement whenever one is given a torus-equivariant enhancement of the preceding formalism. Thus, let $T$ be an algebraic torus acting on a smooth Deligne--Mumford stack $\mathcal X$, and assume that one works in a $T$-equivariant coefficient theory on $\mathcal X$ satisfying the analogues of the recollement, purity, and concentration hypotheses used in Sections~\ref{sec:UnivLoc}--\ref{sec:concentration}. Let
\begin{equation*}
F=\mathcal X^T=\bigsqcup_\alpha F_\alpha,
\qquad
\iota:F\hookrightarrow \mathcal X,
\end{equation*}
be the fixed-locus immersion, and let $E$ be a $T$-equivariant vector bundle on $\mathcal X$. In any such equivariant realisation, one may consider the characteristic class
\begin{equation}\label{eq:dm-equiv-cglob}
c^{T}_{\mathrm{glob}}
:=
\operatorname{cl}^T_\ell\!\bigl(P(c_1^T(E),\dots,c_r^T(E))\bigr).
\end{equation}
After localising the coefficient ring so that the complement $\mathcal X\setminus F$ is concentrated away from zero, the corresponding localisation torsor along $F$ becomes a singleton, and the general Euler-denominator construction yields
\begin{equation}\label{eq:dm-equiv-localised}
\Loc_F\bigl(c^{T}_{\mathrm{glob}}\bigr)
=
\frac{\iota^*c^{T}_{\mathrm{glob}}}{e_T(N_{F/\mathcal X})}.
\end{equation}
Consequently, if $\mathcal X$ is proper, then the global characteristic number decomposes as a sum of fixed-locus contributions
\begin{equation}\label{eq:dm-equiv-sum}
\int_{\mathcal X} c^{T}_{\mathrm{glob}}
=
\sum_\alpha
\int_{F_\alpha}
\frac{\iota_\alpha^*c^{T}_{\mathrm{glob}}}{e_T(N_{F_\alpha/\mathcal X})}.
\end{equation}
Formula \eqref{eq:dm-equiv-sum} is the natural stack-theoretic fixed-point counterpart of the ABBV pattern inside the present torsorial formalism: the intrinsic object is first the torsor of supported refinements along the fixed locus, and only after equivariant concentration does one recover the familiar quotient by the equivariant Euler class. In the one-dimensional proper case, taking $E=T_{\mathcal X}$ and $P=c_1$ gives a stacky Poincar\'e--Hopf type decomposition in which the degree of $c_1(T_{\mathcal X})$ is written as a sum of local fixed-point residues.

\paragraph{Fixed loci and stacky local terms.}
Let $f:\mathcal X\to\mathcal X$ be an endomorphism for which the Lefschetz object of \ref{def:Lef} is defined, and let $\mathcal S=\Fix(f)$ with open complement $u:\mathcal X\setminus \mathcal S\hookrightarrow \mathcal X$. Assume that there exists a global supported Lefschetz class
\begin{equation*}
\lambda_f\in H^d(\mathcal X;p_{\mathcal X}^*A)
\end{equation*}
with
 $
u^*(\lambda_f)=0,
 $
and whose induced local terms agree with the chosen stack-theoretic fixed-point theory. Then the associated torsor of local terms is
\begin{equation*}
\Loc_{\mathcal S}^{\mathrm{tor}}(\lambda_f)
\subset
\Hom_{D^b_c(\mathcal S,\mathbb Q_\ell)}(\mathbb Q_{\ell,\mathcal S},i^!p_{\mathcal X}^*A[d]).
\end{equation*}
If $\mathcal S=\bigsqcup_\lambda \mathcal S_\lambda$, then \ref{thm:global=local} yields
\begin{equation*}
\int_{\mathcal X}\lambda_f
=
\sum_\lambda\int_{\mathcal S_\lambda}\Loc_{\mathcal S_\lambda}(\lambda_f).
\end{equation*}
If each inclusion $i_\lambda:\mathcal S_\lambda\hookrightarrow \mathcal X$ is representable and regular, and if the corresponding Euler classes are invertible, then
\begin{equation*}
\Loc_{\mathcal S_\lambda}(\lambda_f)=\frac{i_\lambda^*\lambda_f}{e(i_\lambda)}.
\end{equation*}
Thus the Deligne--Mumford setting displays the same structure once again: a torsor of supported local terms first, followed, in the invertible-Euler range, by the familiar denominator formula.

\subsection{Milnor localisation torsors}\label{subsec:milnor-localization-torsors}

The preceding formalism also produces a secondary invariant of singularities.  The issue is not simply to rephrase the classical observation that Milnor-type defects are supported on the singular locus.  Rather, the localisation torsor retains the entire secondary structure of choosing a supported representative of such a defect, and records the choices which remain before a distinguished geometric representative has been specified.  The local geometry behind these defects is closely related to the classical theory of indices on singular varieties, including the Schwartz index, the local Euler obstruction and the GSV-index; see Brasselet--Seade--Suwa and Seade's survey \cite{BrasseletSeadeSuwaVectorFields,BrasseletSeadeSuwaOneForms,SeadeOverview}.

Let $X$ be singular, and write
\[
i:Z=\operatorname{Sing}(X)\hookrightarrow X,
\qquad
j:U=X_{\mathrm{reg}}\hookrightarrow X
\]
for the singular locus and the regular open complement.  Let $C$ be a characteristic theory with values in a coefficient object $A$ in the sense of the present paper.  Assume that $C$ provides a functorial class $C(X)$ and a virtual, expected, or Fulton-type class $C^{\mathrm{vir}}(X)$, and that these two classes agree on the regular locus:
\[
j^*C^{\mathrm{vir}}(X)=j^*C(X).
\]
Set
\[
\Delta_C(X):=C^{\mathrm{vir}}(X)-C(X).
\]
Then $j^*\Delta_C(X)=0$.

\begin{definition}\label{def:milnor-localization-torsor}
The \emph{Milnor localisation torsor} of $X$ with respect to $C$ is
\[
\mathfrak{Mil}^C_Z(X)
:=
\operatorname{Lift}_Z(\Delta_C(X)).
\]
Equivalently,
\[
\mathfrak{Mil}^C_Z(X)
=
\{\widetilde\Delta\in H^*_Z(X;A)
\mid
\forg(\widetilde\Delta)=\Delta_C(X)\}.
\]
Its elements are called \emph{supported Milnor refinements} of the characteristic-class defect $\Delta_C(X)$.
\end{definition}

Thus $\mathfrak{Mil}^C_Z(X)$ is not a new notation for the usual Milnor class.  It is a secondary supported object: it records all possible supported refinements of the global defect, before any geometric construction has selected one of them.

\begin{theorem}\label{thm:milnor-ambiguity-torsor}
The Milnor localisation torsor $\mathfrak{Mil}^C_Z(X)$ is nonempty and is a torsor under the secondary boundary group
\[
G^C_Z(X)
:=
\operatorname{im}\bigl(
\delta:H^{*-1}(U;j^*A)\to H^*_Z(X;A)
\bigr).
\]
Equivalently, if
\[
\widetilde\Delta_1,\widetilde\Delta_2
\in
\mathfrak{Mil}^C_Z(X),
\]
then their difference is a unique element of $G^C_Z(X)$.
\end{theorem}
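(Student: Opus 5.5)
The plan is to reduce the statement directly to Lemma~\ref{lem:LiftZ-torsor}, applied degree by degree. First, I will fix the degree bookkeeping. The defect $\Delta_C(X)$ is a class in $H^*(X;A)$; if $C$ is graded, I will write it as a finite sum of homogeneous components $\Delta_C(X)_d\in H^d(X;A)$. I will read $H^*_Z(X;A)$, $H^{*-1}(U;j^*A)$ and $\operatorname{Lift}_Z$ as the corresponding direct sums (or products) over $d$. Since $j^*$ is additive and degree-preserving, the hypothesis $j^*C^{\mathrm{vir}}(X)=j^*C(X)$ gives $j^*\Delta_C(X)_d=0$ for each $d$. In particular $\Delta_C(X)_d$ satisfies the hypothesis of Definition~\ref{def:LocZ} in every degree.

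Next, I will apply Lemma~\ref{lem:LiftZ-torsor} (equivalently the first item of Theorem~\ref{thm:Loc-factor}) in each degree $d$. This gives that $\mathrm{Lift}^d_Z(\Delta_C(X)_d)$ is nonempty and is a torsor under $\operatorname{im}(\delta:H^{d-1}(U;j^*A)\to H^d_Z(X;A))$. Since $\forg$ is defined componentwise from the counit, a class $\widetilde\Delta=(\widetilde\Delta_d)_d$ satisfies $\forg(\widetilde\Delta)=\Delta_C(X)$ exactly when $\forg(\widetilde\Delta_d)=\Delta_C(X)_d$ for all $d$. Hence $\mathfrak{Mil}^C_Z(X)=\prod_d \mathrm{Lift}^d_Z(\Delta_C(X)_d)$. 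Likewise $G^C_Z(X)=\bigoplus_d \operatorname{im}\delta_d$, because $\delta$ is degree-raising by one and the image of a graded map is the sum of the images of its components.

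Then I will conclude. A product of nonempty torsors under groups $G_d$ is a nonempty torsor under $\prod_d G_d$. If only finitely many degrees are nonzero, or the gradings are read as direct sums on both sides, this matches $G^C_Z(X)$. In the other case I will note that the same componentwise argument works with products throughout. Freeness of the action gives uniqueness of the difference element: if $\widetilde\Delta_1-\widetilde\Delta_2=g=g'$ with $g,g'\in G^C_Z(X)$, then $g=g'$ trivially, and exactness at $H^*_Z(X;A)$ shows the difference does lie in $\ker\forg=\operatorname{im}\delta$.

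The only step needing care is this grading convention, namely making sure that $H^*_Z$ and $G^C_Z(X)$ are taken in the same (sum versus product) sense as the class $\Delta_C(X)$. I would handle it by stating at the outset that all graded groups are interpreted as direct sums of homogeneous pieces, which is the case for characteristic classes of finite-dimensional $X$.
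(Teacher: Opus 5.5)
Your proposal is correct and follows the paper's proof: the paper simply notes $j^*\Delta_C(X)=j^*C^{\mathrm{vir}}(X)-j^*C(X)=0$ and applies Theorem~\ref{thm:Loc-factor} (equivalently Lemma~\ref{lem:LiftZ-torsor}) to $c=\Delta_C(X)$ and $Z=\operatorname{Sing}(X)$. Your degree-by-degree bookkeeping only makes explicit the grading convention that the paper leaves implicit in the notation $H^*$, and it is harmless: the lemma's argument uses only exactness, and the summed long exact sequence is still exact.
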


\begin{proof}
By construction,
\[
j^*\Delta_C(X)=j^*C^{\mathrm{vir}}(X)-j^*C(X)=0.
\]
The assertion is the localisation torsor theorem, Theorem~\ref{thm:Loc-factor}, applied to $c=\Delta_C(X)$ and $Z=\operatorname{Sing}(X)$.
\end{proof}

\begin{corollary}\label{cor:milnor-canonical}
If
$
G^C_Z(X)=0,
$
then the Milnor defect $\Delta_C(X)$ has a canonical supported refinement on $Z$.  Equivalently, the corresponding local Milnor class is formally unique.

Conversely, when $G^C_Z(X)\neq 0$, the localisation triangle alone does not determine a distinguished local Milnor class; any canonical choice must come from additional geometric structure.
\end{corollary}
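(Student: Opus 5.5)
The plan is to reduce both assertions to the torsor structure already established in Theorem~\ref{thm:milnor-ambiguity-torsor}, which rests on Lemma~\ref{lem:LiftZ-torsor} and Theorem~\ref{thm:Loc-factor} applied to $c=\Delta_C(X)$ and $Z=\operatorname{Sing}(X)$. One point of bookkeeping comes first. The statement is written with the graded symbol $H^*_Z(X;A)$. I would fix a degree $d$ in which $\Delta_C(X)$ lives, or else work componentwise if the characteristic theory is inhomogeneous. In each degree I would then read $G^C_Z(X)=0$ as the vanishing of $\operatorname{im}(\delta:H^{d-1}(U;j^*A)\to H^d_Z(X;A))$. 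For an inhomogeneous class the lifting problem splits degreewise, because $\forg$ and $\delta$ are graded maps. The torsor is therefore the product of the degreewise torsors, and it is a singleton exactly when each factor is.

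\emph{First assertion.} By Theorem~\ref{thm:milnor-ambiguity-torsor}, $\mathfrak{Mil}^C_Z(X)$ is nonempty and any two of its elements differ by a unique element of $G^C_Z(X)$. If $G^C_Z(X)=0$, every such difference is zero, so $\mathfrak{Mil}^C_Z(X)$ is a singleton. Its unique element is the canonical supported refinement. Equivalently, by the second bullet of Theorem~\ref{thm:Loc-factor}, the class $\Loc_Z(\Delta_C(X))\in\Hom_{\C(Z)}(\one_Z,i^!A[d])$ is defined. It is characterised by the support factorisation \eqref{eq:support-factorization}, and this class is the formally unique local Milnor class. Canonicity then also follows from Proposition~\ref{thm:Loc-universal}: any construction $\Lambda_Z$ satisfying triangle compatibility must produce this element. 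This gives the ``equivalently'' clause.

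\emph{Converse.} Assume $G^C_Z(X)\neq0$ and choose any $\widetilde\Delta\in\mathfrak{Mil}^C_Z(X)$, which exists by nonemptiness. For any nonzero $g\in G^C_Z(X)$, the element $\widetilde\Delta+g$ is again a supported refinement, since $\forg\circ\delta=0$. Freeness of the action gives $\widetilde\Delta+g\neq\widetilde\Delta$, so the torsor has at least two elements. The data of the localisation triangle, meaning the objects and morphisms $i_*i^!A\to A\to j_*j^*A$ together with $\Delta_C(X)$, determine only the set $\mathfrak{Mil}^C_Z(X)$ with its $G^C_Z(X)$-action. Every such torsor is acted on transitively by translations, so no element is singled out by the structure alone.

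The main obstacle is to make ``not determined by the localisation triangle alone'' a precise claim rather than a slogan. I would formalise it through Proposition~\ref{prop:secondary-translation-groupoid}. The translation action by $G^C_Z(X)$ gives automorphisms of the structure (the set together with $\forg$ and the $G$-action) that act transitively on $\mathfrak{Mil}^C_Z(X)$. When $G\neq0$, some of these automorphisms move every point. Hence no point is invariant under all automorphisms, and no choice can be made naturally from these data. Any canonical selection must therefore use input outside the triangle, in line with the noncanonical trivialisation of $\mathscr L_Z(c)$.
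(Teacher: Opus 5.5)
Your argument for the first assertion, and for the converse in the sense that $\mathfrak{Mil}^C_Z(X)$ has at least two points when $G^C_Z(X)\neq 0$, is what the paper intends. The paper gives no separate proof: the corollary is read off from Theorem~\ref{thm:milnor-ambiguity-torsor}, and the converse is supported only by the interpretive paragraph that follows it. Your degreewise bookkeeping and the appeal to Proposition~\ref{thm:Loc-universal} are harmless additions.

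The step that does not hold up is your attempt to make the converse rigorous via automorphisms.

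\begin{itemize}
\item \textbf{The premise is false.} You assume the triangle data ``determine only the set $\mathfrak{Mil}^C_Z(X)$ with its $G^C_Z(X)$-action.'' They in fact determine the abelian group $H^*_Z(X;A)=\Hom(\one_X,i_*i^!A[*])$, with its zero element, and $\mathfrak{Mil}^C_Z(X)$ as a specific coset of $G^C_Z(X)$ inside that group.
\item \textbf{Your translations are not automorphisms of the triangle.} They are automorphisms of the bare torsor only. Any endomorphism $\phi$ of $i_*i^!A$ with $\epsilon\circ\phi=\epsilon$ is the identity. By full faithfulness $\phi=i_*\psi$, and under $i_*\dashv i^!$ the condition $\epsilon\circ i_*\psi=\epsilon$ forces $\psi=\mathrm{id}$.
\item \textbf{The extra structure can single out a point.} Suppose $\Delta_C(X)=0$ while $G^C_Z(X)\neq 0$. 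Then $\mathfrak{Mil}^C_Z(X)=\ker(\forg)=G^C_Z(X)$, which contains the canonical element $0$, produced by the triangle alone.
\end{itemize}

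So the defensible content of the converse is non-uniqueness: the torsor admits no $G^C_Z(X)$-equivariant choice of point. The stronger claim that no natural choice comes from the triangle is interpretation, and read literally it fails in the degenerate case above. This is consistent with the paper leaving the converse unproved and only commenting that $G^C_Z(X)\neq0$ measures residual indeterminacy. You should state the converse in that weaker form rather than present the automorphism argument as a proof of the stronger one.
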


This corollary separates two issues which are often conflated.  The vanishing of $j^*\Delta_C(X)$ says that the global defect is formally supported on $Z$.  The vanishing of $G^C_Z(X)$ says something stronger: the supported representative is forced uniquely by the localisation triangle.  Non-vanishing of $G^C_Z(X)$ is therefore a precise measure of the residual secondary indeterminacy of the local Milnor class.

\begin{theorem}\label{thm:milnor-characterisation}
Let $\mathcal M^C_Z$ be any assignment which associates to the Milnor defect $\Delta_C(X)$ a class
\[
\mathcal M^C_Z(X)\in \Hom_{\C(Z)}(\one_Z,i^!A[*])
\]
whose adjoint
\[
\widetilde{\mathcal M}^C_Z(X):\one_X\to i_*i^!A[*]
\]
is compatible with the forget-support morphism, namely
$
\forg\bigl(\widetilde{\mathcal M}^C_Z(X)\bigr)=\Delta_C(X).$
Then $\mathcal M^C_Z(X)$ determines a point of the Milnor localisation torsor
\[
\mathfrak{Mil}^C_Z(X).
\]
If $\mathfrak{Mil}^C_Z(X)$ is a singleton, every such compatible construction gives the same localised Milnor class.
\end{theorem}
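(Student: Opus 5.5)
The plan is to reduce Theorem~\ref{thm:milnor-characterisation} directly to Proposition~\ref{thm:Loc-universal}, or more economically, to rerun its membership argument in the present notation. In degree $d$, the hypothesis $\forg(\widetilde{\mathcal M}^C_Z(X))=\Delta_C(X)$ says exactly that the adjoint $\widetilde{\mathcal M}^C_Z(X)\in H^d_Z(X;A)$ lies in the fibre of $\forg$ over $\Delta_C(X)$. Here $d$ is the degree in which $\Delta_C(X)$ is considered; if $C$ is inhomogeneous, the argument is applied componentwise in each degree.

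\textbf{Step 1.} Fix $d$ and recall that $j^*\Delta_C(X)=0$, as noted before Definition~\ref{def:milnor-localization-torsor}. Hence $\operatorname{Lift}^d_Z(\Delta_C(X))$ is defined and nonempty by Lemma~\ref{lem:LiftZ-torsor}.

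\textbf{Step 2.} The forget-support map is induced by the counit $\epsilon:i_*i^!A\to A$ (Definition~\ref{def:supports}). The compatibility hypothesis is therefore exactly the triangle compatibility of Proposition~\ref{thm:Loc-universal}: the adjoint factors $\Delta_C(X)$ through $i_*i^!A[d]$. It follows that $\widetilde{\mathcal M}^C_Z(X)\in\operatorname{Lift}^d_Z(\Delta_C(X))=\mathfrak{Mil}^C_Z(X)$.

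\textbf{Step 3.} Transporting back along the adjunction isomorphism $H^d_Z(X;A)\cong\Hom_{\C(Z)}(\one_Z,i^!A[d])$, the class $\mathcal M^C_Z(X)$ lies in $\Loc^{\mathrm{tor}}_Z(\Delta_C(X))$. This is the point of the torsor determined by $\mathcal M^C_Z(X)$, and the first assertion follows.

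As in the proof of Proposition~\ref{thm:Loc-universal}, no functoriality or excision hypotheses are needed for this membership statement.

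For the uniqueness clause, suppose $\mathfrak{Mil}^C_Z(X)$ is a singleton. Two compatible constructions $\mathcal M,\mathcal M'$ then have adjoints that both lie in this singleton, so the adjoints are equal. Since adjunction is bijective on morphisms, $\mathcal M=\mathcal M'$. By Definition~\ref{def:LocZ}(3), the common value is $\Loc_Z(\Delta_C(X))$. Equivalently, by Theorem~\ref{thm:milnor-ambiguity-torsor}, the difference of the adjoints lies in $G^C_Z(X)$, which is zero here by Corollary~\ref{cor:milnor-canonical}.

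The only real obstacle is bookkeeping the degree $*$. The statement is written with a graded $H^*_Z$, and the torsor structure is proved one degree at a time. I would make explicit that $\mathfrak{Mil}^C_Z(X)$ is the product over $d$ of the degreewise torsors, acted on by the product of the degreewise boundary images. With that convention, both membership and uniqueness are checked componentwise and the argument above goes through verbatim.
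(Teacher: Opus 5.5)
Your proposal is correct and follows the paper's own argument. The compatibility hypothesis says precisely that the adjoint $\widetilde{\mathcal M}^C_Z(X)$ is a supported refinement, hence a point of $\operatorname{Lift}_Z(\Delta_C(X))=\mathfrak{Mil}^C_Z(X)$, and uniqueness follows from the singleton hypothesis; the paper likewise notes that this is Proposition~\ref{thm:Loc-universal} specialised to $c=\Delta_C(X)$, and your degreewise bookkeeping for the grading $*$ is a harmless clarification that the paper leaves implicit.
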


\begin{proof}
The compatibility condition says precisely that the adjoint class $\widetilde{\mathcal M}^C_Z(X)$ is a supported refinement of $\Delta_C(X)$.  Hence it lies in $\operatorname{Lift}_Z(\Delta_C(X))=\mathfrak{Mil}^C_Z(X)$.  The final assertion follows from uniqueness when the torsor is a singleton.  Equivalently, this is Proposition~\ref{thm:Loc-universal} specialised to the class $c=\Delta_C(X)$.
\end{proof}

\begin{proposition}\label{prop:milnor-comparison}
Let
$
\Phi:C_1\longrightarrow C_2
$
be a natural transformation of characteristic theories compatible with the functorial and virtual classes, so that
\[
\Phi\bigl(C^{\mathrm{vir}}_1(X)\bigr)=C^{\mathrm{vir}}_2(X),
\qquad
\Phi\bigl(C_1(X)\bigr)=C_2(X).
\]
Then
\[
\Phi\bigl(\Delta_{C_1}(X)\bigr)=\Delta_{C_2}(X),
\]
and $\Phi$ induces a morphism of Milnor localisation torsors
\[
\Phi_*:
\mathfrak{Mil}^{C_1}_Z(X)
\longrightarrow
\mathfrak{Mil}^{C_2}_Z(X).
\]
Moreover, this morphism is compatible with the corresponding secondary boundary groups
\[
G^{C_1}_Z(X)\longrightarrow G^{C_2}_Z(X).
\]
\end{proposition}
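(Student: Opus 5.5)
The plan is to realise $\Phi$ at the level of coefficient objects and then use only the naturality of the localisation triangle, as in Theorem~\ref{thm:les}. Concretely, I take a natural transformation of characteristic theories with values in $A_1$ and $A_2$ to be induced by a morphism $\phi:A_1\to A_2$ in $\C(X)$, possibly shifted, acting on cohomology by postcomposition $\alpha\mapsto\phi[d]\circ\alpha$. If $\Phi$ is instead given as a family of group homomorphisms, I would require the corresponding compatibilities with $\forg$, $j^*$ and $\delta$ as hypotheses, and the argument below would go through unchanged.

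\textbf{Step 1: the defect.} I would first check that $\Phi(\Delta_{C_1}(X))=\Delta_{C_2}(X)$. This follows from additivity of postcomposition and the two hypotheses:
\[
\Phi(C^{\mathrm{vir}}_1(X)-C_1(X))=C^{\mathrm{vir}}_2(X)-C_2(X).
\]
I would also record that $j^*\Phi=\Phi j^*$, since $j^*$ commutes with postcomposition by $\phi$.

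\textbf{Step 2: the map on torsors.} The morphism $\phi$ induces a morphism of localisation triangles
\[
(i_*i^!A_1\to A_1\to j_*j^*A_1)\longrightarrow(i_*i^!A_2\to A_2\to j_*j^*A_2).
\]
This holds because the counit $i_*i^!\Rightarrow\id$ and the unit $\id\Rightarrow j_*j^*$ are natural transformations. Applying $\Hom(\one_X,-)$ then gives a commutative ladder between the two long exact sequences. In particular the following identities hold:
\[
\forg\circ\Phi_Z=\Phi\circ\forg,\qquad \Phi_Z\circ\delta=\delta\circ\Phi_U.
\]
Here $\Phi_Z$ is postcomposition with $i_*i^!\phi$, and $\Phi_U$ is postcomposition with $j^*\phi$, transported through \eqref{eq:adjunction-identification-U}.

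Now let $\widetilde\Delta\in\mathfrak{Mil}^{C_1}_Z(X)$. Then
\[
\forg(\Phi_Z\widetilde\Delta)=\Phi(\Delta_{C_1}(X))=\Delta_{C_2}(X),
\]
so $\Phi_Z\widetilde\Delta$ lies in $\mathfrak{Mil}^{C_2}_Z(X)$, and I set $\Phi_*:=\Phi_Z$ restricted to the first torsor. After adjunction, this map is postcomposition with $i^!\phi$ on $\Hom_{\C(Z)}(\one_Z,i^!A_k[*])$.

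\textbf{Step 3: the boundary groups.} The relation $\Phi_Z\circ\delta=\delta\circ\Phi_U$ shows that $\Phi_Z$ carries $G^{C_1}_Z(X)=\operatorname{im}\delta$ into $G^{C_2}_Z(X)$, which gives the group homomorphism $G^{C_1}_Z(X)\to G^{C_2}_Z(X)$. Since $\Phi_Z$ is additive, it satisfies
\[
\Phi_*(\widetilde\Delta+g)=\Phi_*(\widetilde\Delta)+\Phi_Z(g),
\]
so $\Phi_*$ is equivariant along this homomorphism, i.e.\ a morphism of torsors. I would not claim surjectivity, in line with Propositions~\ref{prop:Loc-proper} and~\ref{prop:Loc-box}.

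\textbf{Main obstacle.} The delicate step is the commutation $\Phi_Z\circ\delta=\delta\circ\Phi_U$. It needs the whole triangle, including the connecting morphism, to be functorial in $A$, and not merely its first two arrows. Definition~\ref{def:recollement} assumes functorial distinguished triangles, and I would invoke that axiom explicitly rather than the bare cone axiom, which does not give uniqueness of the map on third terms.
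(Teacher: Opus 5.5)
Your proposal is correct and is the paper's argument made explicit: subtract the two identities, then use functoriality in the coefficient object of the localisation long exact sequence of Theorem~\ref{thm:les}. This carries supported refinements of $\Delta_{C_1}(X)$ to those of $\Delta_{C_2}(X)$, and $\operatorname{im}\delta$ into $\operatorname{im}\delta$. The step you flag as delicate is in fact automatic here: $\Hom(i_*M,j_*N[n])\cong\Hom(j^*i_*M,N[n])=0$ forces the morphism of localisation triangles extending $\phi$ to be unique, so naturality of $\delta$ holds even without appealing to the functorial-triangle axiom.
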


\begin{proof}
The first identity follows by subtracting the two compatibility identities for $\Phi$.  Functoriality of the localisation long exact sequence, and hence of the boundary maps and supported fibres, then carries supported refinements of $\Delta_{C_1}(X)$ to supported refinements of $\Delta_{C_2}(X)$.  This gives the asserted morphism of torsors and its compatibility with the images of the corresponding boundary maps.
\end{proof}

Thus generalized Milnor theories are not simply a list of formally similar defects.  They are organised by comparison morphisms into a functorial system of supported-refinement torsors.

\begin{corollary}\label{cor:milnor-local-index}
Assume that the coefficient theory is endowed with the duality, proper-pushforward and orientation data used in Section~\ref{sec:dual}.  Let
\[
\widetilde\Delta\in \mathfrak{Mil}^C_Z(X)
\]
be a supported Milnor refinement for which the local index construction of Definition~\ref{def:local-index} is defined.  Then
\[
\int_X \Delta_C(X)=\int_Z \widetilde\Delta.
\]
If $Z=\coprod_\alpha Z_\alpha$ is a finite disjoint decomposition and $\widetilde\Delta_\alpha$ denotes the corresponding supported refinement on $Z_\alpha$, then
\[
\int_X \Delta_C(X)
=
\sum_\alpha \int_{Z_\alpha}\widetilde\Delta_\alpha.
\]
\end{corollary}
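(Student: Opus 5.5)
The statement is Corollary~\ref{cor:milnor-local-index}. I would deduce it from the global-to-local index identity of Theorem~\ref{thm:global=local}, applied to the class $c=\Delta_C(X)$. The one adjustment needed is that Theorem~\ref{thm:global=local} is phrased for the canonical class $\Loc_Z(c)$, whereas here $\widetilde\Delta$ is an arbitrary point of the torsor. So the first step is to observe that its proof never uses uniqueness, only the support factorisation.

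Concretely, let $\widetilde\Delta\in\mathfrak{Mil}^C_Z(X)=\operatorname{Lift}_Z(\Delta_C(X))$; it is nonempty by Theorem~\ref{thm:milnor-ambiguity-torsor}. By definition $\forg(\widetilde\Delta)=\Delta_C(X)$, i.e.\ $\epsilon\circ\widetilde\Delta=\Delta_C(X)$ as in \eqref{eq:support-factorization}. I would then apply Proposition~\ref{prop:Loc-proper} to $p_X:X\to\pt$ with $W=\pt$ and $k=\id_{\pt}$. Step~2 of that proof produces, from the refinement $\widetilde\Delta$, a supported refinement of $p_{X*}\Delta_C(X)$ along $\pt$. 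Since $k_*k^!=\id$ and the counit is the identity, this refinement equals $p_{X*}\Delta_C(X)$ itself. Step~3 identifies it with $(p_Z)_*$ of the adjoint of $\widetilde\Delta$. Unwinding Definitions~\ref{def:global-index} and~\ref{def:local-index} then gives $\int_X\Delta_C(X)=\int_Z\widetilde\Delta$. Note that the morphism \eqref{eq:Loc-proper-map} is defined pointwise on the torsor, so no singleton hypothesis is needed.

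For the disjoint decomposition, I would use $i_*i^!\cong\bigoplus_\alpha i_{\alpha*}i_\alpha^!$, exactly as in \eqref{eq:disjoint-splitting-support-cohomology}. This lets me write $\widetilde\Delta=\sum_\alpha\widetilde\Delta_\alpha$ with $\widetilde\Delta_\alpha\in H^*_{Z_\alpha}(X;A)$. Proper pushforward along $p_Z=\coprod p_{Z_\alpha}$ is additive, which yields the sum formula.

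The main obstacle is bookkeeping rather than mathematics. The local index $\int_Z\widetilde\Delta$ is only defined when the model lets one regard $\widetilde\Delta$ as a class with coefficients pulled back from the point, and the statement takes this as an explicit assumption. In addition, $A$ must be of the form $p_X^*A_0$ for the pushforward of Proposition~\ref{prop:Loc-proper} to apply; I would state this as part of the ``duality, proper-pushforward and orientation data'' hypothesis. One must also check that the identification used in Definition~\ref{def:local-index} is the same one produced by the Beck--Chevalley isomorphism $k^!p_{X*}\simeq p_{Z*}i^!$. I would simply take this compatibility as part of the assumed data.
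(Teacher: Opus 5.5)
Your proposal is correct and follows the paper's route. The paper's proof is simply Theorem~\ref{thm:global=local} applied to $c=\Delta_C(X)$, together with the additivity over disjoint decompositions proved there. Your extra remark is a worthwhile clarification: Theorem~\ref{thm:global=local} is phrased for the canonical class $\Loc_Z(c)$, whereas the corollary concerns an arbitrary $\widetilde\Delta\in\mathfrak{Mil}^C_Z(X)$, and the argument through Proposition~\ref{prop:Loc-proper} with $W=\pt$ still works for every point of the torsor because the target torsor over $\pt$ is a singleton.
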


\begin{proof}
This is Theorem~\ref{thm:global=local} applied to the class $c=\Delta_C(X)$, together with the additivity over disjoint decompositions stated there.
\end{proof}

Classical local Milnor classes should be viewed, in this language, as geometrically supplied points of the torsor $\mathfrak{Mil}^C_Z(X)$.  The torsor itself is forced by the open--closed formalism; a distinguished element of it is additional data.  It may arise from explicit singularity theory, from vanishing-cycle constructions, from motivic specialisation, from a virtual normal cone, from a purity statement, or from any other geometric construction which rigidifies the secondary boundary data.  The present formalism therefore separates the general supported-refinement problem from the model-dependent geometry which chooses a preferred solution.

\subsection{Isolated singularities, links, and Milnor refinements}
\label{subsec:isolated-singularities-milnor-refinements}

We now spell out a geometric situation in which the preceding formalism has
substantial content.  Let $M$ be a smooth complex algebraic variety and let
\[
X=f^{-1}(0)\subset M
\]
be a hypersurface with isolated singular locus
\[
Z=\operatorname{Sing}(X)=\{p_1,\ldots,p_s\}.
\]
Put $U=X_{\mathrm{reg}}$, and let
\[
j:U\hookrightarrow X,
\qquad
 i:Z\hookrightarrow X
\]
denote the complementary open and closed immersions.

Let $C$ be a characteristic theory for which both a functorial class $C(X)$ and
a virtual, expected, or Fulton-type class $C^{\mathrm{vir}}(X)$ are defined.
Assume that these classes agree on the regular locus:
\[
j^*C^{\mathrm{vir}}(X)=j^*C(X).
\]
Then the characteristic-class defect
\[
\Delta_C(X):=C^{\mathrm{vir}}(X)-C(X)
\]
satisfies
$
j^*\Delta_C(X)=0.
$
By Definition~\ref{def:milnor-localization-torsor}, this defect therefore
produces the Milnor localisation torsor
\[
\mathfrak{Mil}^C_Z(X)
=
\operatorname{Lift}_Z(\Delta_C(X)).
\]

Since $Z$ is finite, a supported refinement of $\Delta_C(X)$ may be regarded as
a choice of local supported contributions
\[
\widetilde{\Delta}
=
\sum_{\alpha=1}^s \widetilde{\Delta}_{p_\alpha},
\qquad
\widetilde{\Delta}_{p_\alpha}\in H^*_{\{p_\alpha\}}(X;A),
\]
whose image under the forget-support morphism is the global defect
$\Delta_C(X)$.  Thus $\mathfrak{Mil}^C_Z(X)$ is not a replacement for the
classical Milnor class.  It is the torsor of possible supported realisations of
the defect before a distinguished singularity-theoretic construction has
selected a representative.

By Theorem~\ref{thm:milnor-ambiguity-torsor}, the secondary indeterminacy of such local
realisations is
\[
G^C_Z(X)
=
\operatorname{im}\left(
\delta:H^{*-1}(U;j^*A)\longrightarrow H^*_Z(X;A)
\right).
\]
Consequently the formalism measures whether the global defect admits a unique
supported decomposition into local singular contributions, or whether the
regular locus carries a non-trivial secondary boundary structure.

For each singular point $p_\alpha$, choose a sufficiently small Milnor ball
$B_\alpha\subset M$, and write
\[
L_\alpha=X\cap \partial B_\alpha
\]
for the link of the singularity.  By excision, the local part of the
localisation sequence at $p_\alpha$ is computed by the pair
\[
(X\cap B_\alpha,\,(X\cap B_\alpha)\setminus\{p_\alpha\}).
\]
The punctured neighbourhood
$
(X\cap B_\alpha)\setminus\{p_\alpha\}
$
deformation retracts onto $L_\alpha$.  Hence the local component of the
boundary morphism
\[
\delta:H^{*-1}(U;j^*A)\longrightarrow H^*_Z(X;A)
\]
is governed, after excision, by the cohomology of the links $L_\alpha$.
Under the purity, orientation, and Thom-isomorphism hypotheses of
Proposition~\ref{prop:ambiguity-link-transgression}, this secondary boundary structure is
identified with the image of the corresponding link-transgression morphisms. 
In
particular, the secondary boundary group $G^C_Z(X)$ is controlled locally by the topology
of the singular links:
\[
G^C_Z(X)\  \text{is obtained from the images of the link-transgression boundaries
associated with the }L_\alpha.
\]
Thus the Milnor localisation torsor records not only that $\Delta_C(X)$ is
supported on the singular set, but also the secondary boundary structure through which
the topology of the regular locus affects the choice of local supported
representatives.

\subsection{Vanishing cycles as a rigidification}
\label{subsec:vanishing-cycles-rigidification}

In the hypersurface case, classical constructions of Milnor classes by nearby
and vanishing cycles should be understood, in the present language, as supplying
additional singularity-theoretic structure which selects distinguished points of
the Milnor localisation torsor.  The torsor itself is produced formally by the
localisation triangle applied to the defect $\Delta_C(X)$.  The vanishing-cycle
construction, specialisation morphisms, or equivalent geometric data then
provide a canonical representative in that torsor when such a representative is
available.

Thus the classical Milnor class is not made indeterminate by the present formalism.
Rather, it is interpreted as a distinguished supported refinement of a more
primitive secondary object:
\[
\mathfrak{Mil}^C_Z(X)
=
\operatorname{Lift}_Z(\Delta_C(X)).
\]
The formal secondary boundary group
\[
\operatorname{im}\left(
\delta:H^{*-1}(U;j^*A)\to H^*_Z(X;A)
\right)
\]
measures what the localisation triangle alone cannot decide.  Classical
singularity theory supplies the additional structure which, in favourable
situations, rigidifies this torsor.
Under the duality and orientation hypotheses of Section~\ref{sec:dual}, any
supported Milnor refinement
$
\widetilde{\Delta}\in\mathfrak{Mil}^C_Z(X)
$
defines local indices at the singular points.  If
\[
\widetilde{\Delta}
=
\sum_{\alpha=1}^s\widetilde{\Delta}_{p_\alpha},
\]
then Corollary~\ref{cor:milnor-local-index} gives a decomposition
\[
\operatorname{Ind}(\Delta_C(X))
=
\sum_{\alpha=1}^s
\operatorname{Ind}_{p_\alpha}(\widetilde{\Delta}_{p_\alpha}).
\]
When the torsor is rigidified by a geometric construction, this becomes the
usual decomposition of a global Milnor-type invariant into local singular
contributions.  When the secondary boundary group is non-zero, the same formula holds
for each supported refinement, and the torsor records the possible variation of
the local decomposition.

This example replaces elementary non-singleton examples by a geometric
situation in which the same secondary structure is carried by the links of singularities
and interacts with the classical theory of Milnor classes.

The construction gives the following instances.
\begin{itemize}
\item For the Chern--Schwartz--MacPherson/Fulton defect,
\[
\mathfrak{Mil}^{c}_Z(X)
=
\operatorname{Lift}_Z\bigl(c_F(X)-c_{\mathrm{SM}}(X)\bigr).
\]

\item For motivic Chern classes,
\[
\mathfrak{Mil}^{mC_y}_Z(X)
=
\operatorname{Lift}_Z\bigl(mC_y^{\mathrm{vir}}(X)-mC_y(X)\bigr).
\]

\item For Hirzebruch classes,
\[
\mathfrak{Mil}^{T_y}_Z(X)
=
\operatorname{Lift}_Z\bigl(T^{\mathrm{vir}}_{y*}(X)-T_{y*}(X)\bigr).
\]
\end{itemize}
In each case the notation denotes the torsor of supported realisations of the corresponding defect, rather than a replacement for the usual characteristic class.  These defects are classically studied as differences between virtual or Fulton-type characteristic classes and functorial characteristic classes; see, for instance, \cite{ParusinskiPragaczJAG,YokuraMotivicMilnor,CappellMaximSchuermannShaneson,MaximSaitoYang,CallejasMorgadoSeadeChernClasses}.  Their local interpretation is also naturally connected with index-theoretic descriptions of characteristic classes on singular spaces, as in the work of Brasselet--Seade--Suwa \cite{BrasseletSeadeSuwaVectorFields,BrasseletSeadeSuwaOneForms}.  The construction records, for each such difference, a secondary torsor of supported refinements, together with its secondary boundary group, its canonicity criterion, its comparison maps, and the corresponding local index invariants.

\end{document}